\numberwithin{equation}{section}
                        \theoremstyle{plain}
\newcommand{\psdraw}[2]
         {\begin{array}{c} \hspace{-1.3mm}
         \raisebox{-4pt}{\psfig{figure=#1.eps,width=#2}}
         \hspace{-1.9mm}\end{array}}
\newtheorem{theorem}{Theorem}[section]
\newtheorem{thm}{Theorem}
\newtheorem{lemma}[theorem]{Lemma}
\newtheorem{corollary}[theorem]{Corollary}
\newtheorem{proposition}[theorem]{Proposition}
\newtheorem{conjecture}{Conjecture}
\theoremstyle{definition}
\newtheorem{remark}[theorem]{Remark}
\theoremstyle{plain}
\newtheorem{prop}{Proposition}
\theoremstyle{definition}
\newtheorem{defn}[prop]{Definition}
\theoremstyle{remark}
\newcommand{\eqM}{\overset{M}{=}}
\newcommand{\lcr}{\raisebox{-5pt}{\mbox{}\hspace{1pt}
                  \epsfig{file=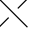}\hspace{1pt}\mbox{}}}
\newcommand{\ift}{\raisebox{-5pt}{\mbox{}\hspace{1pt}
                  \epsfig{file=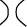}\hspace{1pt}\mbox{}}}
\newcommand{\zer}{\raisebox{-5pt}{\mbox{}\hspace{1pt}
                  \epsfig{file=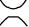}\hspace{1pt}\mbox{}}}
\def\BC{\mathbb C}
\def\BZ{\mathbb Z}
\def\BT{\mathbb T}
\def\BQ{\mathbb Q}
\def\CA{\mathcal A}
\def\CM{\mathcal M}
\def\CO{\mathcal O}
\def\CP{\mathcal P}
\def\CR{\mathcal R}
\def\CS{\mathcal S}
\def\CT{\mathcal T}
\def\fa{\mathfrak a}
\def\fb{\mathfrak b}
\def\fm{\mathfrak m}
\def\fp{\mathfrak p}
\def\ft{\mathfrak t}
\def\fb{\mathfrak b}
\def\fp{\mathfrak p}
\def\ft{\mathfrak t}
\def\la{\langle}
\def\ra{\rangle}
\def\ov{\bar}
\DeclareMathOperator{\tr}{\mathrm tr}
\def\al{\alpha}
\def\ve{\varepsilon}
\def\be { \begin{equation} }
\def\ee { \end{equation} }
\def\bD{{\bar D }}
\def\bTheta{\bar \Theta  }
\def\btheta{\tilde \theta  }
\def\im{\mathrm {Im}}
\def\bbtheta{\bar \theta}
\def\bbs{\bar {\mathfrak s}}
\def\bt{\tilde {\mathfrak t}}
\def\bS{\bar \CS}
\def\bp{{\tilde {\mathfrak p}}}
\def\bt{{\tilde {\mathfrak t}}}
\def\wt{\widetilde}
\def\bbp{\bar {\mathfrak p   }}
\def\bbt{\bar {\mathfrak t   }}
\def\CC{\mathcal C}
\newcommand\no[1]{}
\def\bP{\bar \CP}
\def\bT{\bar \CT}
\def\be{\begin{equation}}
\def\ee{\end{equation}}
\def\bes{\begin{equation*}}
\def\ees{\end{equation*}}
\def\ba{\begin{align}}
\def\ea{\end{align}}
\def\bas{\begin{align*}}
\def\eas{\end{align*}}
\def\fp{\mathfrak p}
\def\ve{\varepsilon}
\def\la{\langle}
\def\ra{\rangle}
\def\coeff{\operatorname{coeff}}
\def\tP{\tilde P}
\def\tQ{\tilde Q}
\def\tCS{\tilde{\mathcal S}}
\def\bS{\bar {\mathcal S}}
\def\CS{ {\mathcal S}}
\def\bD{\bar D}
\def\bN{\mathscr N}
\def\scP{\mathscr P}
\def\scQ{\mathscr Q}
\def\scX{\mathscr X}
\def\fm{\mathbf m}
\def\eqlt{\, \overset {\mathrm{lt}}{=} \ }
\begin{document}

\title{On the AJ conjecture for knots}

\author[Thang  T. Q. Le]{Thang  T. Q. Le}
\address{School of Mathematics, 686 Cherry Street,
 Georgia Tech, Atlanta, GA 30332, USA}
\email{letu@math.gatech.edu}

\author[Anh T. Tran]{Anh T. Tran}
\address{Department of Mathematics, The Ohio State University, Columbus, OH 43210, USA}
\email{tran.350@osu.edu}

\address{Faculty of Mathematics and Informatics, University of Natural Sciences, Vietnam National University, 227 Nguyen Van Cu, District 5, Ho Chi Minh City, Vietnam}
\email{hqvu@hcmuns.edu.vn}

\thanks{T.L. was supported in part by National Science Foundation. \\
2010 {\em Mathematics Classification:} Primary 57N10. Secondary 57M25.\\
{\em Key words and phrases: $A$-polynomial, colored Jones polynomial, AJ conjecture, two-bridge knot, double twist knot, pretzel knot, universal character ring.}}

\dedicatory{\rm{With an appendix written jointly with VU Q. HUYNH}}

\begin{abstract}
We confirm the AJ conjecture \cite{Ga04} that relates the $A$-polynomial and the colored Jones polynomial for  hyperbolic knots satisfying certain conditions. In particular, we show that the conjecture holds true for some classes of two-bridge knots and pretzel knots. This extends the result of the first author in \cite{Le06} where he established the AJ conjecture for a large class of two-bridge knots, including all twist knots. Along the way, we explicitly calculate the universal $SL_2$-character ring of the knot group of the $(-2,3,2n+1)$-pretzel knot and show that it is reduced for all integers $n$.
\end{abstract}

\maketitle

\setcounter{section}{-1}

\section{Introduction}

\subsection{The AJ conjecture}

For a knot $K$ in $S^3$, let $J_K(n) \in \mathbb Z[t^{\pm 1}]$ be the colored Jones polynomial of $K$ colored by the (unique) $n$-dimensional simple representation of $sl_2$ \cite{Jo, RT}, normalized so that
for the unknot $U$,
$$J_U(n) = [n] := \frac{t^{2n}- t^{-2n}}{t^2 -t^{-2}}.$$
The color $n$ can be assumed to take negative integer values by setting $J_K(-n) = - J_K(n)$. In particular, $J_K(0)=0$. It is known that $J_K(1)=1$ and $J_K(2)$ is the ordinary Jones polynomial.

Define two linear operators $L,M$ acting on the set of discrete functions $f: \mathbb Z \to \mathbb \CR:=\BC[t^{\pm 1}]$ by
$$(Lf)(n) := f(n+1), \qquad (Mf )(n) := t^{2n} f(n).$$
It is easy to see that $LM= t^2 ML$. The inverse operators $L^{-1}, M^{-1}$ are well-defined. One can consider $L,M$ as elements of the quantum torus
$$ \mathcal T := \mathbb \CR\langle L^{\pm1}, M^{\pm 1} \rangle/ (LM - t^2 ML),$$
which is not commutative, but almost commutative.

Let $$\mathcal A_K := \{ P \in \mathcal T \mid  P J_K=0\}.$$ It is a left ideal of $\mathcal T$, called the {\em recurrence ideal} of $K$. It was proved in \cite{GL} that for every knot $K$, the
recurrence ideal $\mathcal A_K$ is non-zero. Partial results were obtained
earlier by Frohman, Gelca and Lofaro through their theory of
non-commutative $A$-ideals \cite{FGL,Ge}. An element in $\mathcal A_K$ is called a recurrence relation for the colored Jones polynomial of $K$.

The ring $\mathcal T$ is not a principal left ideal domain, i.e. not every left ideal of $\mathcal T$ is generated by one element. By adding all inverses of polynomials in $t,M$ to
$\mathcal T$ one gets a principal left ideal domain $\tilde\CT$, cf. \cite{Ga04}. Denote the
generator of the extension $\tilde\CA_K:=\mathcal A_K \cdot \tilde\CT$ by $\alpha_K$. The element $\alpha_K$ can be presented in the form
$$\alpha_K(t;M,L) = \sum_{j=0}^{d} \alpha_{K,j}(t,M) \, L^j,$$
where the degree in $L$ is assumed to be minimal and all the
coefficients $\alpha_{K,j}(t,M)\in \BZ[t^{\pm1},M]$ are assumed to
be co-prime. The polynomial $\alpha_K$ is defined up to a polynomial in $\mathbb Z[t^{\pm 1},M]$. We call $\alpha_K$ the {\em recurrence polynomial} of $K$.

\medskip

Garoufalidis \cite{Ga04} formulated the following conjecture (see also \cite{FGL, Ge}).

\begin{conjecture}{\bf (AJ conjecture)} For every knot $K$, $\alpha_K |_{t=-1}$ is equal to the $A$-polynomial, up to a factor depending on $M$ only.
\label{c1}
\end{conjecture}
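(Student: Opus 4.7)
The plan is to attack the conjecture not in full generality, which remains open, but for restricted classes of knots where both sides of the claimed equality admit explicit presentations, and then to extract a structural criterion that forces the equality to hold.

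First I would exploit the specialization at $t=-1$: the quantum torus $\CT$ becomes the commutative Laurent polynomial ring $\BC[L^{\pm 1},M^{\pm 1}]$, and the Kauffman bracket skein module of $S^3\setminus K$ specializes to (the reduced quotient of) the coordinate ring of the $SL_2(\BC)$-character variety $X(S^3\setminus K)$. Under this identification, the $A$-polynomial cuts out the closure of the image of $X(S^3\setminus K)$ inside the character variety of the boundary torus, modulo the abelian component. So the natural strategy is twofold: produce $\alpha_K$ as the generator of the recurrence ideal $\CA_K$ of $J_K(n)$, and compute the $A$-polynomial from the character variety, then compare their images in $\BC[L^{\pm 1},M^{\pm 1}]$.

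Second, for the explicit knot families in question (certain two-bridge knots and $(-2,3,2n+1)$-pretzel knots), I would produce a recurrence relation for $J_K(n)$ from the cyclotomic/Habiro-type expansion of the colored Jones polynomial. For two-bridge knots this machinery is in place from \cite{Le06}; for pretzel knots one needs to adapt it, writing $J_K(n)$ as a sum over tangle states and applying the $q$-difference operators $L$ and $M$ to collapse the sum to a closed recurrence. The $L$-degree of the resulting operator should match the $L$-degree of the $A$-polynomial; minimality must then be verified separately, which is non-trivial and is where reducedness enters.

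Third, and this is the algebraic core, I would show that the universal $SL_2$-character ring of $\pi_1(S^3\setminus K)$ is reduced. Reducedness is exactly what forces the two natural ideals, namely the image of $\CA_K$ at $t=-1$ and the kernel of restriction to the boundary torus, to coincide on the nose rather than merely share a radical; without it one only recovers $\alpha_K|_{t=-1}$ up to multiplicity. For two-bridge knots this follows from existing work, but for the $(-2,3,2n+1)$-pretzel family I would need an explicit finite presentation of the character ring in trace coordinates on a small generating set of $\pi_1$, and then a direct commutative-algebra argument (Jacobian criterion or a Gr\"obner-basis style verification) that the defining ideal has no nilpotents.

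The hard part will be precisely this reducedness statement for the pretzel family: it is not formal, and depends on an explicit, somewhat delicate, manipulation of the trace relations coming from the Wirtinger presentation of the pretzel knot group. Once reducedness is secured and both $\alpha_K$ and the $A$-polynomial are in hand, the final identification at $t=-1$ reduces to matching two explicit polynomials in $\BC[L^{\pm 1},M^{\pm 1}]$ up to a factor in $\BC[M^{\pm 1}]$, which should be a direct, if lengthy, computation.
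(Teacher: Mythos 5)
Your proposal captures several of the paper's ingredients — the specialization at $t=-1$ identifying $\CS(X)$ with the universal character ring, the central role of reducedness, and the explicit trace-coordinate computation for the pretzel family — but it takes a route the paper deliberately avoids, and that route hits a serious obstacle.

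The obstacle is in your second step. You propose to compute an explicit recurrence $\alpha_K$ for $J_K(n)$ from a Habiro/cyclotomic expansion and then compare it to the $A$-polynomial term by term. For two-bridge knots this is the method of \cite{Le06}, but for $(-2,3,2n+1)$-pretzel knots such a closed-form $q$-difference recurrence is not available, and producing one (let alone verifying that it generates $\widetilde\CA_K$) is itself a hard open problem. The whole point of Theorem~\ref{t1} is to sidestep this: under the hypotheses one proves a divisibility $\ve(\alpha_K)\mid B_K$ \emph{without} ever writing down $\alpha_K$, and then pins down $\ve(\alpha_K)$ by an irreducibility argument. Concretely, the paper uses a localization of the skein module at $(1+t)$ and a structure theorem for left ideals of the localized quantum torus (Proposition~\ref{lem.ideal}) to show that surjectivity of $\bbtheta$ plus reducedness of the character ring give $\ve(\alpha_K)\mid B_K$ (Corollary~\ref{aB}); then the hypothesis that $\chi(X)$ has exactly two irreducible components forces $A_K/(L-1)$ to be irreducible, so $\ve(\alpha_K)/(L-1)$ is either $1$ or $A_K/(L-1)$ up to an $M$-factor, and condition (iv) on the $L$-degree rules out the first case via Lemma~\ref{prop17}. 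All that is needed about $\alpha_K$ from the quantum side is the $L$-degree bound, which is imported from \cite{Ga10}, not derived from a Habiro expansion.

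You also misplace where reducedness does its work and you omit a hypothesis that is doing essential work. Reducedness is not used to certify minimality of a candidate recurrence; it is used so that $\fs(X)=\BC[\chi(X)]$, which makes the localized peripheral ideal $\bbp$ equal to $\bp=(B_K)$ in diagram~\eqref{dia.5a}. Separately, condition (ii) — that $\chi(X)$ has exactly two irreducible components, one abelian and one non-abelian — is crucial twice: via Dunfield's birationality theorem it yields surjectivity of $\btheta$ (the input to Lemma~\ref{lem.surj}), and it makes $A_K/(L-1)$ irreducible, which is what collapses the divisibility into an equality. Your sketch does not invoke this hypothesis at all, and without it the argument cannot close; indeed the paper's remark lists hyperbolic two-bridge knots for which (ii) fails and the method does not apply.

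So: the part of your plan that matches the paper is the reducedness computation for the pretzel family (Section~\ref{pretzel}) and the general framework of comparing $\ve(\alpha_K)$ with the $A$-polynomial; the part that diverges, and would not go through as written, is the reliance on an explicit computation of the recurrence polynomial and the omission of the two-component hypothesis and the divisibility-plus-irreducibility mechanism that replaces that computation.
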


In the definition of the $A$-polynomial \cite{CCGLS}, we also allow the abelian component of the character variety, see Section \ref{2}.

\subsection{Main results} Conjecture \ref{c1} was established for a large class of two-bridge knots, including
all twist knots, by the first author \cite{Le06} using skein theory. In this paper
we generalize his result as follows.

\begin{thm} Suppose $K$ is a knot satisfying all the following conditions:

(i) $K$ is hyperbolic,

(ii) the $SL_2$-character variety of $\pi_1(S^3\setminus K)$ consists of two irreducible components (one abelian and one non-abelian),

(iii) the universal $SL_2$-character ring of $\pi_1(S^3\setminus K)$ is reduced,

(iv) the localized skein module $\bar{\CS}$ of $S^3 \setminus K$ is finitely generated,
and

(v) the recurrence polynomial of $K$ has $L$-degree greater than 1.\\
Then the AJ conjecture holds true for $K$.

\label{t1}

\end{thm}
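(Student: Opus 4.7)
The plan is to adapt the skein-theoretic strategy Le developed in \cite{Le06} for two-bridge knots, now phrased abstractly so that the hypotheses (i)--(iv) play the role of the explicit combinatorial input used there. The core idea is to interpret the recurrence polynomial $\al_K$ inside the Kauffman bracket skein module of the exterior $E_K := S^3 \setminus K$, specialize to $t=-1$ to obtain a classical object, and identify that object with the $A$-polynomial via the $SL_2$-character variety.

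First I would invoke Bullock's theorem together with condition (iii) to obtain a ring isomorphism between the skein module $S_{-1}(E_K)$ and the coordinate ring $\BC[\chi(\pi_1(E_K))]$; reducedness of the universal character ring is precisely what promotes Bullock's natural surjection (whose kernel is a priori a nilpotent ideal) to an isomorphism. Under this identification, the peripheral subring embeds into $\BC[L^{\pm 1}, M^{\pm 1}]^{\sigma}$, the invariants under the Weyl involution $(L,M) \mapsto (L^{-1}, M^{-1})$ on the boundary torus character variety. Condition (ii) then forces the classical peripheral annihilator of the colored Jones function to be principal, generated up to $M$-only factors by $(L-1)\,A_K(L,M)$: the factor $L-1$ comes from the abelian component and the factor $A_K$ from the non-abelian component, which by hyperbolicity (i) is a one-dimensional curve projecting non-trivially to $\chi(\partial E_K)$.

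Next I would show that $\ve(\al_K) := \al_K(-1;M,L)$ lies in this classical peripheral ideal. This is a functoriality statement: any recurrence in $\CT$ annihilating $J_K$ specializes at $t=-1$ to a classical relation annihilating the image in $\BC[\chi(\pi_1(E_K))]$ of the colored Jones function, and therefore $(L-1)\,A_K(L,M)$ divides $\ve(\al_K)$ up to an $M$-only factor.

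The reverse divisibility is the crux, and this is where condition (iv) enters. One must construct a quantum lift of (a suitable $M$-multiple of) $(L-1)A_K$ inside the recurrence ideal $\CA_K$, then invoke $L$-degree minimality of $\al_K$ to compare. The skein module provides the vehicle: any classical peripheral relation can be lifted to a quantum element annihilating $J_K$ after clearing $t$-dependent denominators. Condition (iv) ensures that $\ve(\al_K)$ does not collapse to a pure power of $L-1$ arising merely from the abelian component, so its $L$-degree genuinely matches that of $(L-1)A_K$. The main obstacle I anticipate is controlling this lifting step --- ruling out hidden drops in $L$-degree upon $t \mapsto -1$, and carefully tracking $M$-only factors so that the conclusion is equality up to such factors rather than mere proportionality on some Zariski open.
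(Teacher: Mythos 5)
Your proposal has the key containment backwards, and this undermines the whole first step. You assert as a ``functoriality statement'' that any recurrence $P\in\CA_K$ specializes at $t=-1$ to an element of the classical peripheral ideal, hence $(L-1)A_K \mid \ve(\al_K)$. No such functoriality exists: the recurrence ideal is defined through the action on the discrete function $J_K$, and there is no natural map $\CA_K \to \fp$. The containment that \emph{is} known runs the other way, namely $\CP \subset \CO = \CA_K \cap \CT^\sigma$, so elements of the quantum peripheral ideal are recurrence relations, not vice versa. Consequently the only divisibility one can extract from skein theory is that $\al_K$ divides (in $\widetilde\CT$) every element of $\CP$ (more precisely, of its localized version), which after setting $t=-1$ yields $\ve(\al_K)\mid B_K$ — the \emph{opposite} of what you claim is the easy direction. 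The paper never proves $A_K\mid\ve(\al_K)$; instead it establishes $\ve(\al_K)\mid A_K$ (up to $M$-factors) and then uses condition (ii) to conclude that $A_K/(L-1)$ is irreducible, so that $\ve(\al_K)/(L-1)$ is either an $M$-unit or $M$-essentially equal to $A_K/(L-1)$, with condition (iv) ruling out the first alternative via the lemma that $\ve(\al_K)\eqM L-1$ forces $L$-degree $1$.

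The proposal also skips the real technical content of the proof: making the $t\to -1$ specialization of the peripheral ideal well-behaved. One needs surjectivity of the classical restriction map $\btheta$ (this is where hyperbolicity and the two-component hypothesis enter, via Dunfield's theorem and the birational identification of abelian components), a structure theorem for left ideals in the localized quantum torus $\bD[L^{\pm1}]$, and Kaplansky's classification of countably generated modules over a discrete valuation ring to ensure that $\bS_\fin$ is finitely generated and $\bTheta'$ is surjective. Without this machinery there is no control over whether the generator of the localized peripheral ideal, after dividing by $\al_K$ and reducing mod $(1+t)$, actually produces $B_K$ rather than a proper multiple; your remark about ``ruling out hidden drops in $L$-degree'' gestures at this difficulty but offers no mechanism for it. Finally, note that with the paper's conventions the $A$-polynomial already includes the factor $L-1$, so writing $(L-1)A_K$ double-counts the abelian contribution.
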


For the definition of the localized skein module $\bar{\CS}$ of $S^3 \setminus K$ in the  condition (iv) of Theorem \ref{t1}, see Section \ref{AJ}.

\begin{thm} The following knots satisfy all the conditions (i)--(v) of Theorem \ref{t1} and hence the AJ conjecture holds true for them.

(a) All pretzel knots of type $(-2,3,6n\pm 1)$, $n \in \mathbb Z$.

(b) All two-bridge knots for which the $SL_2$-character variety has exactly two irreducible components; these include

\begin{itemize}
\item all double twist knots of the form $J(k,l)$ (see Figure \ref{fig.10}) with $k \not= l$,
\item all two-bridge knots $\fb(p,m)$ with $m=3$, and
\item all two-bridge knots $\fb(p,m)$ with $p$ prime and $\gcd(\frac{p-1}{2},\frac{m-1}{2})=1$.
\end{itemize}
\label{t2}
\end{thm}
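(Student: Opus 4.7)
The plan is to verify conditions (i)--(iv) of Theorem~\ref{t1} separately for each of the two families, and then invoke Theorem~\ref{t1}.

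For the pretzel knots $K_n$ of type $(-2,3,6n\pm 1)$, condition (i) is classical: these are hyperbolic, since $(-2,3,q)$-pretzel knots with $|q|\geq 7$ and $q$ odd, coprime to $3$, are hyperbolic (only small exceptions like the trefoil $(-2,3,3)$ need to be ruled out). For conditions (ii) and (iii) the plan is to give an explicit presentation of the knot group $\pi_1(S^3\setminus K_n)$ and, from it, write down generators for the universal $SL_2$-character ring in terms of trace coordinates. The resulting ring is a quotient of a polynomial ring by one or two trace-polynomial relations, which one can analyze directly: factoring this ring should yield precisely one abelian component (given by the reducible representations) and one non-abelian component, establishing (ii). Reducedness in (iii) is then the main technical step --- one checks that the Jacobian criterion applies at a generic point of each component, or more concretely that the defining ideal equals its own radical. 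For the $(-2,3,2n+1)$-pretzel knots this is exactly the calculation promised in the abstract and which the appendix (with Huynh) carries out in full.

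Condition (iv) for the pretzels can be handled by a lower bound on the $L$-degree of $\alpha_{K_n}$. Since the non-abelian component of the character variety has positive dimension and the Newton polygon of the $A$-polynomial of $K_n$ is non-degenerate in the $L$-direction (e.g., using known calculations of $A$-polynomials of $(-2,3,q)$-pretzel knots, or the slope information coming from boundary slopes of essential surfaces), the $A$-polynomial has $L$-degree at least $2$. If the AJ conjecture were to produce a recurrence polynomial of $L$-degree $1$, then its $t=-1$ specialization would have $L$-degree $\leq 1$, contradicting the degree of the $A$-polynomial; hence $L$-degree of $\alpha_{K_n}$ is $>1$. (Independently, one may recall that $\alpha_K$ has $L$-degree $1$ only for the unknot, by the result of Garoufalidis--Le.)

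For the two-bridge knot family, hyperbolicity (i) is standard: a two-bridge knot $\fb(p,m)$ is hyperbolic unless it is a $(2,p)$-torus knot, which is excluded by the two-component hypothesis on the character variety. Condition (ii) is assumed. For (iii), one uses the Riley-type presentation of two-bridge knot groups on two meridians to write the universal $SL_2$-character ring explicitly as a quotient $\BC[x,y]/\langle R(x,y)\rangle$, where $R$ is the Riley polynomial (together with the abelian piece); reducedness reduces to showing $R$ is squarefree, which for the listed families --- double twist knots $J(k,l)$ with $k\neq l$ and the knots $\fb(p,m)$ with $m=3$ or $p$ prime and $\gcd(\tfrac{p-1}{2},\tfrac{m-1}{2})=1$ --- can be checked by a direct number-theoretic/combinatorial argument on the Riley polynomial (e.g.\ analyzing its resultant with its derivative). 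Condition (iv) then follows exactly as above via the $L$-degree of the $A$-polynomial of two-bridge knots.

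The main obstacle is condition (iii): showing the universal $SL_2$-character ring is reduced. For the pretzel knots this requires an explicit, somewhat delicate, algebraic manipulation of the defining ideal (which is why it is spun off into the joint appendix), while for the two-bridge families it reduces to a squarefreeness statement for the Riley polynomial, for which the coprimality hypothesis on $(p,m)$ is tailor-made. The verification of (i), (ii), and (iv) is by comparison routine, once the correct presentations and known facts about $A$-polynomials are assembled.
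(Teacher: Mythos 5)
Your outline breaks down at two of the four conditions. First, the verification of (iv) is circular. The only divisibility you can hope to extract (and the only one the proof of Theorem~\ref{t1} actually provides) is that $\varepsilon(\alpha_K)$ $M$-essentially divides $A_K$; the converse divisibility is precisely the hard half of the AJ conjecture, so a lower bound on the $L$-degree of $A_K$ gives no lower bound at all on the $L$-degree of $\alpha_K$. Arguing ``if AJ produced a recurrence of $L$-degree $1$ it would contradict the $A$-polynomial'' assumes the conclusion you are trying to establish. The parenthetical fallback is also not available: it is not a theorem of Garoufalidis--Le that $\alpha_K$ has $L$-degree $1$ only for the unknot; that statement is essentially the open problem of whether the colored Jones polynomials detect the unknot (cf.\ the remark in Section~\ref{OP} about $\CP=\CO$). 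What is needed, and what the paper uses, are genuine degree bounds on the colored Jones polynomials themselves: \cite[Proposition 2.2]{Le06} for two-bridge knots and the computation in \cite[Section 4.7]{Ga10} for the $(-2,3,2n+1)$-pretzel knots.

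Second, condition (ii) is never actually verified, and you have mislocated where the arithmetic hypotheses enter. For the pretzel knots, the claim that factoring $\BC[x,y,z]/(P,Q_n)$ ``should yield precisely one non-abelian component'' is exactly the nontrivial point and is where the restriction to $(-2,3,6n\pm1)$ comes from: for $2n+1$ divisible by $3$ the non-abelian character variety is not irreducible, and even in the $6n\pm1$ case the paper does not deduce irreducibility from the presentation but invokes Mattman \cite{Mat}. For the two-bridge families, reducedness (iii) holds unconditionally for \emph{all} two-bridge knots by \cite{Le06, PS}; the hypotheses ``$m=3$'' or ``$p$ prime and $\gcd(\frac{p-1}{2},\frac{m-1}{2})=1$'' have nothing to do with squarefreeness of the Riley polynomial. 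They are exactly what is needed to prove that the non-abelian character variety is irreducible over $\BC$, i.e.\ condition (ii): this is the content of the appendix (Propositions \ref{phi_n}, \ref{x=0} and Theorem \ref{prime}, via the Newton polygon of $\Gamma_{(p,m)}$ and cyclotomic-field considerations), supplemented by \cite{MPL} for $J(k,l)$ with $k\neq l$ and \cite{Bur} for $m=3$. Relatedly, the appendix written with Huynh treats two-bridge knots, not the pretzels; the reducedness of the pretzel character rings is Theorem \ref{reduced} in Section \ref{pretzel}. With these corrections the remaining items (hyperbolicity, and the general shape ``cite the right sources for each condition'') match the paper's proof, which is a short assembly of Section~4, the appendix, and the references above.
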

Here we use the notation $\fb(p,m)$ for two bridge knots from \cite{BZ}.

\begin{figure}
\setlength{\unitlength}{0.09mm}
\thicklines{
\begin{picture}(300,460)(80,-20)
\put(0,0){\line(0,1){440}}
\put(0,0){\line(1,0){125}}
\put(100,100){\line(1,0){25}}
\put(100,100){\line(0,1){125}}
\put(125,-25){\line(1,0){150}}
\put(125,-25){\line(0,1){150}}
\put(125,125){\line(1,0){150}}
\put(275,-25){\line(0,1){150}}
\put(100,225){\line(1,0){50}}
\put(150,225){\line(0,1){25}}
\put(125,250){\line(1,0){150}}
\put(125,250){\line(0,1){150}}
\put(125,400){\line(1,0){150}}
\put(275,400){\line(0,-1){150}}
\put(0,440){\line(1,0){150}}
\put(150,440){\line(0,-1){40}}
\put(190,30){{\large$l$}}
\put(185,310){\large{$k$}}
\put(250,400){\line(0,1){40}}
\put(250,440){\line(1,0){150}}
\put(400,440){\line(0,-1){440}}
\put(275,0){\line(1,0){125}}
\put(250,225){\line(1,0){50}}
\put(250,225){\line(0,1){25}}
\put(300,100){\line(0,1){125}}
\put(300,100){\line(-1,0){25}}
\end{picture}
}
\caption{The double twist knot $J(k,l)$. Here $k$ and $l$ denote
the numbers of half twists in the boxes. Positive (resp. negative) numbers correspond
to right-handed (resp. left-handed) twists. }
\label{fig.10}
\end{figure}
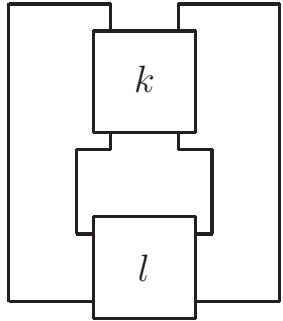

\begin{remark} Besides the infinitely many cases of two-bridge knots listed in Theorem \ref{t2}, explicit calculations seem to confirm that ``most two-bridge knots" satisfy the
conditions of Theorem \ref{t1} and hence AJ conjecture holds for them. In fact, among 155 $\fb(p,m)$ with  $p < 45$, only 9 hyperbolic knots $\fb(15,11)$, $\fb(21,13)$, $\fb(27,5)$, $\fb(27,17)$, $\fb(27,19)$, $\fb(33,5)$, $\fb(33,13)$, $\fb(33,23)$ and $\fb(35,29)$ do not satisfy the condition (ii) of Theorem \ref{t1}. Thus, the AJ conjecture holds for all two-bridge knots $\fb(p,m)$ with  $p < 45$ except for these 9 knots.
\end{remark}

\subsection{Other results} In our proof of Theorem \ref{t2}, it is important to know whether the universal $SL_2$-character ring of a knot group is reduced, i.e. whether its nil-radical is zero.
Although it is difficult to construct a group whose universal $SL_2$-character ring is not reduced (see \cite{LM}),
so far there are a few groups for which the universal $SL_2$-character ring is known to be reduced: free groups \cite{Si}, surface groups \cite{CM, Si}, two-bridge knot groups \cite{Le06, PS},
torus knot groups \cite{Mar}, two-bridge link groups \cite{LT}. In the present paper, we show that the universal $SL_2$-character ring of the $(-2,3,2n+1)$-pretzel knot is reduced for all integers $n$.

\subsection{Plan of the paper} We review skein modules and their relation with the colored Jones polynomial in Section 1. In Section 2 we prove some properties of the $SL_2$-character variety and the $A$-polynomial a knot. We discuss the role of localized skein modules in our approach to the AJ conjecture and give proofs of Theorems \ref{t1} and \ref{t2} in Section 3. In Section 4, we prove the reducedness of the universal $SL_2$-character ring of the $(-2,3,2n+1)$-pretzel knot. In Section 5, we prove that the localized skein module $\bar{\CS}$ of the $(-2,3,2n+1)$-pretzel knot is  finitely generated.  Finally we study the irreducibility of non-abelian $SL_2$-character varieties of two-bridge knots in the appendix.

\subsection{Acknowledgements} The authors would like to thank J. Etnyre, S. Garoufalidis, J. Marche, T. Mattman, K. Petersen, A. Sikora for helpful correspondence and discussions, and the referee for comments/suggestions.

\section{Skein Modules and the colored Jones polynomial}

\label{sec01}

In this section we will review skein modules and their relation with the colored Jones polynomial. The theory of the  Kauffman bracket skein module (KBSM) was introduced by Przytycki \cite{Pr} and Turaev \cite{Tu} as a generalization of the Kauffman bracket \cite{Ka} in $S^3$  to an arbitrary
$3$-manifold. The KBSM of a knot complement contains a lot, if not all, of information about its colored Jones polynomial.

\subsection{Skein modules}

 \label{subSkein}

 Recall that $\CR=\BC[t^{\pm1}]$.
 A {\em framed link} in an oriented $3$-manifold $Y$ is a disjoint union of embedded circles, equipped with a
 non-zero normal vector field. Framed links are considered up to isotopy. Let $\mathcal{L}$ be the set of isotopy
classes of framed links in the manifold $Y$, including the empty
link. Consider the free $\CR$-module with basis $\mathcal{L}$, and
factor it by the smallest submodule containing all expressions of
the form $\lcr-t\zer-t^{-1}\ift$ and
$\bigcirc+(t^2+t^{-2}) \emptyset$, where the links in each
expression are identical except in a ball in which they look like
depicted. This quotient is denoted by $\CS(Y)$ and is called the
Kauffman bracket skein module, or just skein module, of $Y$.

For an oriented surface $\Sigma$ we define $\CS(\Sigma):= \CS(Y)$, where $ Y= \Sigma \times [0,1]$ is the cylinder over $\Sigma$. The skein module
$\CS(\Sigma)$ has an algebra structure induced by the operation
of gluing one cylinder on top of the other. The operation of
gluing the cylinder over $\partial Y$ to $Y$ induces a
$\CS(\partial Y)$-left module structure on $\CS(Y)$.

\subsection{The skein module of $S^3$ and the colored Jones polynomial} When $Y=S^3$, the skein
module $\CS(Y)$ is free over $\CR$ of rank one, and is spanned by
the empty link. Thus if $\ell$ is a framed link in $S^3$, then
its value in the skein module $\CS(S^3)$ is $\langle \ell
\rangle$ times the empty link, where $\langle \ell \rangle \in
\CR$ is the Kauffman bracket of $\ell$ \cite{Ka} which is the Jones polynomial of the
{\em framed link} $\ell$ in a suitable normalization.

Let $S_n(z)$'s be the Chebychev polynomials defined by $S_0(z)=1$, $S_1(z)=z$ and $S_{n+1}(z)=zS_n(z)-S_{n-1}(z)$ for all $n \in \BZ$. For a framed knot $K$ in $S^3$ and an integer $n \ge 0$, we define the $n$-th
power $K ^n$ as the link consisting of $n$ parallel copies of $K$ (this is a 0-framing cabling operation).
Using these powers of a knot, $S_n(K)$ is defined as an element
of $\CS(S^3)$. We define the colored Jones polynomial $J_K(n)$ by the equation
$$ J_K(n+1):=(-1)^{n} \times\langle S_n(K) \rangle. $$
The $(-1)^n$ sign is added so that for the unknot $U$, $J_U(n) = [n].$ Then
$J_K(1)=1$ and $J_K(2)= - \langle K \rangle$. We extend the
definition for all integers $n$ by $J_K(-n)= -J_K(n)$ and
$J_K(0)=0$. In the framework of quantum invariants, $J_K(n)$ is
the $sl_2$-quantum invariant of $K$ colored by the $n$-dimensional
simple representation of $sl_2$.

\subsection{The skein module of the torus} Let $\BT^2$ be the torus with a fixed pair $(\mu, \lambda)$ of simple closed curves intersecting at exactly one point.
For co-prime integers $k$ and $l$, let $\lambda_{k,l}$ be a simple closed curve  on the torus homologically equal to $k\mu+ l\lambda$. It is not difficult to show that
the skein algebra $\CS(\BT^2)$ of the torus is generated, as an $\CR$-algebra, by all $\lambda_{k,l}$'s. In fact,
Bullock and Przytycki \cite{BP} showed that  $\CS(\BT^2)$ is
generated over $\CR$ by 3 elements $\mu,\lambda$ and $\lambda_{1,1}$, subject to some explicit relations.

Recall that $ \CT= \CR\la M^{\pm 1}, L^{\pm1} \ra/(LM - t^2 ML)$ is the quantum torus. Let $\sigma: \CT \to \CT$ be the involution defined by $\sigma(M^{k} L^{l}) := M^{-k} L^{-l}$.
Frohman and Gelca \cite{FG} showed that there is an algebra isomorphism $\Upsilon: \CS(\BT^2)\to \CT^{\sigma}$ given by
$$ \Upsilon(\lambda_{k,l}) :=  (-1)^{k+l} t^{kl} (M^{k}L^{l} +M^{-k}L^{-l}).$$
The fact that $\CS(\BT^2)$ and $\CT^{\sigma}$ are isomorphic algebras was also proved by Sallenave  \cite{Sa}.
\label{sec_torus}

\subsection{The orthogonal and peripheral ideals} Let $N(K)$ be a tubular neighborhood of an oriented knot $K$ in $S^3$, and $X$ the closure of $S^3 \setminus N(K)$. Then $\partial (N(K))= \partial(X)=
\BT^2$. There is a standard choice of a meridian $\mu$  and a longitude $\lambda$ on $\BT^2$ such that the linking
number between the longitude and the knot is zero. We use this pair $(\mu,\lambda)$ and the map $\Upsilon$ in the previous subsection to identify $\CS(\BT^2)$ with $\CT^\sigma$.

The torus $\BT^2= \partial(N(K))$ cut $S^3$ into two parts: $N(K)$ and $X$. We can consider $\CS(X)$ as a left $\CS(\BT^2)$-module and $\CS(N(K))$ as a right $\CS(\BT^2)$-module.
There is a bilinear bracket
$$ \la \cdot, \cdot \ra : \CS(N(K)) \otimes_{\CS(\BT^2)} \, \CS(X)\,  \to \CS(S^3)\equiv \CR$$
given by $\la \ell', \ell'' \ra := \la \ell' \cup \ell'' \ra$, where $\ell'$ and $\ell''$ are links in respectively $N(K)$ and $X$. Note that  if $\ell \in \CS(\BT^2)$, then
$$ \la \ell' \cdot \ell, \ell'' \ra = \la \ell',\ell\cdot \ell'' \ra.$$

In general $\CS(X)$ does not have an algebra structure, but it has the identity element--the empty link.
The map
$$ \Theta: \CS(\BT^2) \to \CS(X), \quad \Theta(\ell) := \ell \cdot \emptyset$$
is $\CS(\BT^2)$-linear. Its kernel $\CP:=\ker \Theta$ is called the {\em quantum peripheral ideal}, first introduced in \cite{FGL}. In \cite{FGL, Ge}, it was proved that every element in $\CP$ gives rise to a recurrence relation for the colored Jones polynomial.

The {\em orthogonal ideal} $\CO$ in \cite{FGL} is defined by
$$ \CO := \{ \ell\in \CS(\partial X)\quad  \mid \quad  \langle \ell' ,\Theta(\ell)
\rangle=0 \quad \text{for every }\ell' \in \CS(N(K))\}.$$
It is clear that $\CO$  is a left ideal of $\CS(\partial X)  \equiv \CT^{\sigma}$ and $\CP \subset \CO$. In \cite{FGL}, $\CO$ was called the formal ideal. According to \cite{Le06}, if $\CP=\CO$ for all knots then the colored
Jones polynomial distinguish the unknot from other knots.

\label{OP}

\subsection{Relation between the recurrence and orthogonal ideals}

As mentioned above, the skein algebra of the torus $\CS (\BT^2)$
can be identified with $\CT^{\sigma}$ via the $\CR$-algebra
isomorphism $\Upsilon$ sending $\mu,\lambda$ and $\lambda_{1,1}$ to respectively
$-(M+M^{-1}), -(L+L^{-1})$ and $t(ML+M^{-1}L^{-1})$.

\begin{proposition} One has $$(-1)^{n}\la  S_{n-1}(\lambda) , \Theta(\ell) \ra = \Upsilon(\ell)J_K(n)$$ for all $\ell \in \CS (\BT^2).$
\label{CP}
\end{proposition}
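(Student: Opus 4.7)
The plan is to reduce to an $\CR$-basis of $\CS(\BT^2)$ and then do a direct computation via the Frohman--Gelca isomorphism. Both sides of the claimed identity are $\CR$-linear in $\ell$, and by the Frohman--Gelca theorem recalled in Subsection~\ref{sec_torus}, $\CS(\BT^2)$ is spanned as an $\CR$-module by $\{\emptyset\} \cup \{\lambda_{k,l}\}_{(k,l) \neq (0,0)}$ (modulo the identification $\lambda_{k,l} = \lambda_{-k,-l}$). Hence it suffices to verify the formula on these basis elements.

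For the base case $\ell = \emptyset$, the identity reduces to the defining relation for the colored Jones polynomial: one has $\Theta(\emptyset) = \emptyset$ and $\Upsilon(\emptyset) = 1$, while $\langle S_{n-1}(\lambda), \emptyset\rangle = \langle S_{n-1}(K)\rangle$ since the $0$-framed longitude is isotopic inside $N(K)$ to the knot $K$, and the latter equals $(-1)^{n-1} J_K(n)$ by the definition recalled in Section~\ref{sec01}.

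For $\ell = \lambda_{k,l}$ with $(k,l) \neq (0,0)$, I would first apply bracket duality to rewrite
$$\langle S_{n-1}(\lambda), \Theta(\lambda_{k,l})\rangle = \langle S_{n-1}(\lambda) \cdot \lambda_{k,l}, \emptyset\rangle,$$
and then compute the product $S_{n-1}(\lambda) \cdot \lambda_{k,l}$ in $\CS(\BT^2)$. The cleanest route is to transfer this computation to the quantum torus $\CT^\sigma$ via $\Upsilon$, using $\Upsilon(S_{n-1}(\lambda)) = (-1)^{n-1}(L^{n-1} + L^{n-3} + \cdots + L^{-(n-1)})$ together with the given formula for $\Upsilon(\lambda_{k,l})$. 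The product in $\CT^\sigma$ expands directly via the commutation relation $LM = t^2 ML$, and pulling back to $\CS(\BT^2)$ gives $S_{n-1}(\lambda) \cdot \lambda_{k,l}$ as an explicit $\CR$-linear combination of basis elements $\lambda_{k, l'}$.

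These basis elements are then pushed into $\CS(N(K))$ (whose skein module is $\CR[\lambda]$, with $\mu$ evaluating to $-t^2 - t^{-2}$ since it bounds a disk in the solid torus) and finally evaluated in $\CS(S^3) \equiv \CR$; longitudinal cablings $S_m(\lambda)$ contribute $(-1)^m J_K(m+1)$ by the definition of the colored Jones polynomial. Collecting the pieces should yield an expression of the form $(-1)^{k+l} t^{kl}(t^{2kn} J_K(n+l) + t^{-2kn} J_K(n-l))$, which is precisely $\Upsilon(\lambda_{k,l}) J_K(n)$ by the action of $M, L$ on the colored Jones polynomial. The main obstacle is the careful bookkeeping of signs and framings: the prefactor $(-1)^{k+l} t^{kl}$ in the definition of $\Upsilon$ precisely absorbs the framing discrepancy between the torus-framed curve $\lambda_{k,l}$ and the $0$-framed cabling of $K$, and the $t$-powers produced by repeated applications of $LM = t^2 ML$ must match these framings exactly.
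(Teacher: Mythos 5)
Your approach is genuinely different from the paper's, and it is worth comparing. The paper does not reduce to an $\CR$-module basis $\{\lambda_{k,l}\}$ at all: it computes the right action of each of the three \emph{algebra} generators $\mu,\lambda,\lambda_{1,1}$ of $\CS(\BT^2)$ on $S_{n-1}(\lambda)\in\CS(N(K))$ (this is exactly where the Jones--Wenzl idempotent enters, giving the three displayed ``eigenvalue/recursion'' relations), and then concludes by bootstrapping along products of generators using the adjunction $\la \ell'\cdot\ell,\ell''\ra=\la\ell',\ell\cdot\ell''\ra$. The payoff is that only three short skein computations are needed and no product-to-sum formula in $\CS(\BT^2)$ has to be worked out. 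Your route is logically cleaner in one sense (both sides are $\CR$-linear, so a basis check is the ``right'' reduction), but it shifts all the work into exactly the hard part: one must compute the image of each $\lambda_{k,l}$ under the natural map $\CS(\BT^2)\to\CS(N(K))\cong\CR[\lambda]$ and then evaluate in $\CS(S^3)$. For $k\neq0$ the curve $\lambda_{k,l}$ pushed into the solid torus is a torus knot there, not a simple longitudinal cabling, and its expansion in the $S_m(\lambda)$ basis (together with the framing shift coming from the surface framing) is a genuine computation -- essentially the content of Frohman--Gelca's product-to-sum formula -- which you acknowledge but do not carry out. So as written this is a sketch, not a proof.

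There is also a concrete red flag in your base case that you should not gloss over. You correctly compute $\la S_{n-1}(\lambda),\emptyset\ra=(-1)^{n-1}J_K(n)$, so the left-hand side of the Proposition at $\ell=\emptyset$ is $(-1)^n(-1)^{n-1}J_K(n)=-J_K(n)$, while the right-hand side is $\Upsilon(\emptyset)J_K(n)=J_K(n)$. These do \emph{not} agree; the $(-1)^n$ prefactor cannot simultaneously be correct for $\ell=\emptyset$ and for $\ell=\mu$ (where the paper's displayed formula makes it come out right). This sign clash is harmless for the only downstream use of the Proposition (Corollary~\ref{th01}, which is a vanishing statement), but a basis-by-basis verification like yours \emph{must} confront it head-on: either state the identity with the overall sign that makes $\ell=\emptyset$ work and re-check the $\lambda_{k,l}$ cases, or restrict the verification to the non-constant part of the module and handle the constant summand separately. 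As it stands, your proposal claims the base case ``reduces to the defining relation'' without noticing that it fails by a sign, and leaves the framing/sign bookkeeping for the general $\lambda_{k,l}$ entirely unverified -- and those two points are precisely where the argument lives or dies.
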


\begin{proof}
We know from the properties of the Jones-Wenzl idempotent (see e.g. \cite{Oh}) that
	\begin{eqnarray*}
		\la S_{n-1}(\lambda) \cdot \mu, \, \emptyset \ra  &=& (t^{2n}+t^{-2n}) \la S_{n-1}(\lambda),\emptyset \ra\\
		\la S_{n-1}(\lambda) \cdot \lambda, \, \emptyset \ra &=&  \la S_{n}(\lambda)+ S_{n-2}(\lambda), \emptyset \ra\\
		\la S_{n-1}(\lambda) \cdot \lambda_{1,1}, \, \emptyset \ra &=&  - \la t^{2n+1}S_{n}(\lambda)+ t^{-2n+1}S_{n-2}(\lambda), \emptyset \ra.
	\end{eqnarray*}
	
By definition $J_K(n)=(-1)^{n-1} \la S_{n-1}(\lambda), \emptyset \ra$. Moreover $(MJ_K)(n)=t^{2n}J_K(n)$ and $(LJ_K)(n)=J_K(n+1)$. Hence the above equations can be rewritten as
\begin{eqnarray*}
		(-1)^{n}\la  S_{n-1}(\lambda) , \, \Theta(\mu) \ra &=& -(M + M^{-1}) J_K(n)=\Upsilon(\mu) J_K(n)
			, \\
 		(-1)^{n}\la  S_{n-1}(\lambda) , \, \Theta(\lambda)\ra &=& -(L + L^{-1}) J_K(n)=\Upsilon(\lambda) J_K(n)
 			, \\
 		(-1)^{n}\la S_{n-1}(\lambda) , \, \Theta(\lambda_{1,1}) \ra &=& t(ML + M^{-1}L^{-1}) J_K(n)
 			= \Upsilon(\lambda_{1,1}) J(n).
\end{eqnarray*}
	
	Since $\CS (\BT^2)$ is generated by $\mu, \lambda$ and $\lambda_{1,1}$, we conclude that $$(-1)^{n}\la  S_{n-1}(\lambda) , \Theta(\ell) \ra = \Upsilon(\ell)J_K(n)$$ for all $\ell \in \CS (\BT^2).$
\end{proof}

\begin{corollary}
One has $\CO = \CA_K \cap \CT^{\sigma}.$
\label{th01}
\end{corollary}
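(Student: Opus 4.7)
The plan is to prove the two inclusions separately, with Proposition \ref{CP} as the main bridge and the standard basis of $\CS(N(K))$ as the auxiliary fact.

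For the inclusion $\CO \subseteq \CA_K \cap \CT^\sigma$, I would start with $\ell \in \CO$. By the very definition of $\CO$, $\langle \ell', \Theta(\ell)\rangle = 0$ for every framed link $\ell'$ in $N(K)$. Since $N(K)$ is a solid torus whose core is isotopic (in $N(K)$) to the longitude $\lambda \subset \partial N(K) = \BT^2$, the elements $S_{n-1}(\lambda)$ are well-defined members of $\CS(N(K))$ for every $n \geq 1$. Specializing $\ell' = S_{n-1}(\lambda)$ gives $\langle S_{n-1}(\lambda), \Theta(\ell) \rangle = 0$ for all $n \geq 1$, and then Proposition \ref{CP} immediately yields $\Upsilon(\ell)\, J_K(n) = 0$ for all $n$, i.e. $\Upsilon(\ell) \in \CA_K$. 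Since by construction $\Upsilon(\ell) \in \CT^\sigma$, the first inclusion follows from the identification $\CS(\BT^2) \equiv \CT^\sigma$.

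For the reverse inclusion, I would take $P \in \CA_K \cap \CT^\sigma$, write $P = \Upsilon(\ell)$ for a unique $\ell \in \CS(\BT^2)$, and run Proposition \ref{CP} in the opposite direction: $P J_K = 0$ gives $\langle S_{n-1}(\lambda), \Theta(\ell) \rangle = 0$ for all $n \geq 1$. To upgrade this to vanishing against every $\ell' \in \CS(N(K))$, I would invoke the standard fact that $\CS(N(K))$ is a free $\CR$-module with basis $\{S_{n-1}(\lambda)\}_{n \geq 1}$ (equivalently, the basis $\{K^{n-1}\}_{n \geq 1}$ of powers of the core, transformed by the triangular change of basis defined by the Chebyshev recursion). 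Bilinearity of $\langle \cdot, \cdot \rangle$ in the first argument then forces $\langle \ell', \Theta(\ell)\rangle = 0$ for every $\ell' \in \CS(N(K))$, so $\ell \in \CO$.

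The only mildly delicate point, and really the only thing beyond Proposition \ref{CP}, is being careful that the basis assertion for $\CS(N(K))$ is being used in the right form and that the inclusion $\CS(\BT^2) \hookrightarrow \CS(N(K))$ identifies the curve $\lambda$ with a 0-framed copy of the core. Everything else is a direct translation through $\Upsilon$ and an application of the proposition, so the corollary should drop out in just a few lines.
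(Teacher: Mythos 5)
Your proof is correct and follows essentially the same route as the paper: Proposition \ref{CP} translates the pairing with $S_{n-1}(\lambda)$ into $\Upsilon(\ell)J_K(n)$, and the fact that $\{S_n(\lambda)\}_n$ generates $\CS(N(K))$ reduces vanishing against all of $\CS(N(K))$ to vanishing against these basis elements. The paper merely phrases the two inclusions you proved separately as a single chain of set equalities.
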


\begin{proof}
Since $\{S_n(\lambda)\}_n$ generates the skein module $\CS (N(K))$, Proposition \ref{CP} implies that
\begin{eqnarray*}
\CO &=& \{ \ell\in \CS(\partial X)~\mid ~\langle \ell' ,\Theta(\ell)
\rangle=0 \quad \text{for every }\ell' \in \CS(N(K))\}\\
&=& \{ \ell\in \CS(\partial X)~\mid ~\langle S_n(\lambda) ,\Theta(\ell)
\rangle=0 \quad \text{for all integers~} n\}\\
&=& \{\ell\in \CS(\partial X)~\mid ~\Upsilon(\ell)J_K(n)=0 \quad \text{for all integers~} n\}.
\end{eqnarray*}
Hence $\CO=\CA_K \cap \CT^{\sigma}.$
\end{proof}

\begin{remark} 	Corollary \ref{th01} was already obtained in \cite{Ga08} by another method. Our proof uses the properties of the Jones-Wenzl idempotent only.
\end{remark}

\section{Character varieties and the $A$-polynomial}

\label{2}

For non-zero $f,g\in \BC[M,L]$, we say that $f$ is {\em
$M$-essentially equal to } $g$, and write $f \,\overset{M}{=}\,
g,$ if the quotient $f/g$ does not depend on $L$. We say that $f$ is {\em
$M$-essentially divisible by} $g$ if $f$ is $M$-essentially equal
to a polynomial divisible by $g$.

\subsection{The character variety of a group}

The set of representations of a finitely presented group $G$
into $SL_2(\BC)$ is an algebraic set defined over $\BC$, on which
$SL_2(\BC)$ acts by conjugation. The set-theoretic
quotient of the representation space by that action does not
have good topological properties, because two representations with
the same character may belong to different orbits of that action. A better
quotient, the algebro-geometric quotient denoted by $\chi(G)$
(see \cite{CS,LM}), has the structure of an algebraic
set. There is a bijection between $\chi(G)$ and the set of all
characters of representations of $G$ into $SL_2(\BC)$, hence
$\chi(G)$ is usually called the {\em character variety} of $G$.
For a manifold $Y$ we use $\chi(Y)$ also to denote $\chi(\pi_1(Y))$.

Suppose  $G=\BZ^2$, the free abelian group with two generators.
Every pair  of generators $\mu,\lambda$ will define an isomorphism
between $\chi(G)$ and $(\BC^*)^2/\tau$, where $(\BC^*)^2$ is the
set of non-zero complex pairs $(L,M)$ and $\tau$ is the involution
$\tau(M,L):=(M^{-1},L^{-1})$, as follows. Every representation is
conjugate to an upper diagonal one, with $M$ and $L$ being the
upper left entry of $\mu$ and $\lambda$ respectively. The
isomorphism does not change if one replaces $(\mu,\lambda)$ with
$(\mu^{-1},\lambda^{-1})$.

\subsection{The universal character ring} For a finitely presented group $G$,  the character variety $\chi(G)$ is determined by the traces of some fixed elements $g_1, \cdots, g_k$ in $G$. More precisely, one can find $g_1, \cdots, g_k$ in $G$ such that for every element $g \in G$ there exists a polynomial $\mathbf{P}_g$ in $k$ variables such that for any representation $r: G \to SL_2(\BC)$ one has $\tr(r(g)) = \mathbf{P}_g(x_1, \cdots, x_k)$ where $x_j:=\tr(r(g_j))$. The universal character ring of $G$ is then defined to be the quotient of the ring $\BC[x_1, \cdots, x_k]$ by the ideal generated by all expressions of the form $\tr(r(v))-\tr(r(w))$, where $v$ and $w$ are any two words in $g_1, \cdots, g_k$ which are equal in $G$. The universal character ring of $G$ is actually independent of the choice of $g_1, \cdots, g_k$. The quotient of the universal character ring of $G$ by its nil-radical is equal to the ring of regular functions on the character variety of $G$.

The universal character ring defined here is the skein algebra of $G$ of \cite{PS}, where it was proved that it is $TH(G)$ of Brumfiel-Hilden's book \cite{BH}. They proved that it is the universal character ring, which is defined as the coefficient algebra of the universal representation.

\label{universaldef}

\subsection{The $A$-polynomial}

\label{Zariski}

Let $X$ be the closure of $S^3$
minus a tubular neighborhood $N(K)$ of a knot $K$. The boundary of
$X$ is a torus whose fundamental group  is free abelian of rank
two. An orientation of $K$ will define a unique pair of an
oriented meridian  and an oriented longitude such that the linking
number between the longitude and the knot is zero, as in Subsection \ref{OP}. The pair provides
an identification of $\chi(\partial X)$ and $(\BC^*)^2/\tau$
which actually does not depend on the orientation of $K$.

The inclusion $\partial X \hookrightarrow X$ induces the restriction
map
$$\rho : \chi(X) \longmapsto \chi(\partial X)\equiv (\BC^*)^2/\tau.$$
 Let $Z$ be the image of
$\rho$ and  $\hat Z \subset (\BC^*)^2$ the lift of $Z$ under the
projection $(\BC^*)^2 \to (\BC^*)^2/\tau$. The Zariski closure of
$\hat Z\subset (\BC^*)^2 \subset \BC^2$ in $\BC^2$ is an algebraic set
consisting of components of dimension 0 or 1. The union of all the
one-dimension components is defined by a single polynomial $A_K\in
\BZ[M,L]$, whose coefficients are co-prime. Note that $A_K$ is
defined up to $\pm 1$. We call $A_K$ the \textit{$A$-polynomial} of $K$.
By definition, $A_K$ does not have repeated factors. It is known that $A_K$ is always divisible by
$L-1$. The $A$-polynomial here is actually equal to $L-1$ times the $A$-polynomial defined in \cite{CCGLS}.

\subsection{The $B$-polynomial} It is also instructive to see the dual picture in the construction of the
$A$-polynomial. For an algebraic set $V$ (over $\BC$) let $\BC[V]$ denote the ring
of regular functions on $V$.  For example, $\BC[(\BC^*)^2/\tau]=
\ft^\sigma$, the $\sigma$-invariant subspace of  $\ft:=\BC[L^{\pm
1},M^{\pm 1}]$, where $\sigma(M^{k}L^{l})= M^{-k}L^{-l}$.

The map $\rho$ in the previous subsection induces an algebra
homomorphism
$$\theta: \BC[\chi(\partial X)]  \equiv \ft^\sigma \longrightarrow
\BC[\chi(X)].$$
We call the kernel  $\fp$ of $\theta$ the {\em classical
peripheral ideal}; it is an ideal of $\ft^\sigma$. We have the exact sequence
\be
0 \to \fp \to \ft^\sigma \overset{\theta}\longrightarrow \BC[\chi(X)].
\label{eq.orig0}
\ee

The ring $\ft^\sigma \subset \ft= \BC[M^{\pm1},L^{\pm 1}]$ embeds naturally into the
principal ideal domain $\tilde \ft:=\BC(M)[L^{\pm1}]$, where
$\BC(M)$ is the fractional field of $\BC[M]$. The ideal extension $\tilde{\fp}:=\tilde \ft \,\fp$
of $\fp$ in $\tilde \ft$ is thus generated by a single polynomial
$B_K\in \BZ[M,L]$ which has co-prime coefficients and is defined
up to a factor $\pm M^k$ with $k\in \BZ$. Again $B_K$ can be
chosen to have integer coefficients because everything can be
defined over $\BZ$. We call $B_K$ the \textit{$B$-polynomial} of $K$.
\label{B-polynomial}

\subsection{Relation between the $A$-polynomial and $B$-polynomial}

\label{Mfactor}

From the definitions one has immediately that the polynomial $B_K$ is $M$-essentially
divisible by $A_K$. Moreover, their zero sets $\{B_K=0\}$
and $\{A_K=0\}$ are equal, up to some
lines parallel to the $L$-axis in the $LM$-plane.


\begin{lemma} The field $\BC(M)$ is a flat $\BC[M^{\pm 1}]^\sigma$-algebra, and
 $\tilde{\ft} = \ft^{\sigma} \otimes _{\BC[M^{\pm 1}]^{\sigma}} \BC(M).$
\label{important}
\end{lemma}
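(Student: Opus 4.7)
Set $s := M + M^{-1}$. First I would identify $\BC[M^{\pm 1}]^\sigma$: since $\sigma(M^k) = M^{-k}$, the $\sigma$-invariants are spanned by $M^k + M^{-k}$, and these are all polynomials in $s$, so $\BC[M^{\pm 1}]^\sigma = \BC[s]$ is a polynomial ring. Using the relation $M^{-1} = s - M$ (equivalently $M^2 - sM + 1 = 0$), $\BC[M^{\pm 1}]$ is generated over $\BC[s]$ by $\{1, M\}$, and a short argument shows this is in fact a free basis. Since $\BC(M)$ is a localization of $\BC[M^{\pm 1}]$ (it is its fraction field), flatness of $\BC(M)$ over $\BC[s] = \BC[M^{\pm 1}]^\sigma$ follows from transitivity of flatness: free $\Rightarrow$ flat, and localization is flat.

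For the second statement, my plan is to exhibit $\ft^\sigma$ as a free $\BC[s]$-module and then tensor. For each integer $l \geq 1$, set
\[
T_l := L^l + L^{-l}, \qquad U_l := M L^l + M^{-1} L^{-l},
\]
both of which are $\sigma$-invariant. Writing an arbitrary coefficient $f_l(M) \in \BC[M^{\pm 1}]$ uniquely as $a(s) + b(s) M$ using the basis $\{1, M\}$ from the previous paragraph, the $\sigma$-invariant pair $f_l(M) L^l + f_l(M^{-1}) L^{-l}$ becomes $a(s) T_l + b(s) U_l$. This should give a direct sum decomposition
\[
\ft^\sigma \;=\; \BC[s] \cdot 1 \;\oplus\; \bigoplus_{l \geq 1} \bigl(\BC[s]\cdot T_l \,\oplus\, \BC[s]\cdot U_l\bigr),
\]
so $\ft^\sigma$ is free over $\BC[s]$ with basis $\{1\} \cup \{T_l, U_l : l \geq 1\}$. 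The mild bookkeeping to check this cleanly is the only slightly finicky step.

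Finally, I would consider the natural $\BC(M)$-algebra map
\[
\phi : \ft^\sigma \otimes_{\BC[M^{\pm 1}]^\sigma} \BC(M) \longrightarrow \widetilde{\ft}, \qquad f \otimes g \longmapsto fg.
\]
By the freeness just established, the source is a free $\BC(M)$-module on $\{1\} \cup \{T_l, U_l\}_{l \geq 1}$, while the target decomposes as $\bigoplus_{l \in \BZ} \BC(M) L^l$. For each $l \geq 1$, the transition matrix from $\{L^l, L^{-l}\}$ to $\{T_l, U_l\}$ is $\bigl(\begin{smallmatrix} 1 & 1 \\ M & M^{-1} \end{smallmatrix}\bigr)$, with determinant $M^{-1} - M$, a unit in $\BC(M)$. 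Hence $\phi$ carries a $\BC(M)$-basis to a $\BC(M)$-basis and is an isomorphism. The main (small) obstacle is verifying the free $\BC[s]$-basis of $\ft^\sigma$; once that is in hand the rest reduces to a $2\times 2$ determinant computation.
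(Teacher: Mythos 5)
Your proposal is correct, and on the flatness half it coincides with the paper: both use the two-step extension $\BC[M^{\pm 1}]^\sigma \subset \BC[M^{\pm 1}] \subset \BC(M)$, with $\BC[M^{\pm 1}] = \BC[M^{\pm 1}]^\sigma \oplus M\,\BC[M^{\pm 1}]^\sigma$ giving freeness and localization giving the rest. For the identification $\widetilde{\ft} = \ft^{\sigma} \otimes_{\BC[M^{\pm 1}]^{\sigma}} \BC(M)$ you take a genuinely different route. The paper never writes down a basis of $\ft^\sigma$: it gets injectivity of $\psi(x\otimes y)=xy$ for free from the flatness just established (tensoring the inclusion $\ft^\sigma \hookrightarrow \ft$ with $\BC(M)$), and then proves surjectivity by the single explicit identity expressing $L^n$ as $\psi\bigl((ML^n+M^{-1}L^{-n})\otimes \tfrac{1}{M-M^{-1}} - (L^n+L^{-n})\otimes \tfrac{M^{-1}}{M-M^{-1}}\bigr)$ — which is exactly the inverse of your $2\times 2$ transition matrix $\bigl(\begin{smallmatrix} 1 & 1 \\ M & M^{-1} \end{smallmatrix}\bigr)$, so the underlying computation is the same. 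You instead exhibit the explicit free $\BC[s]$-basis $\{1\}\cup\{T_l,U_l\}_{l\ge 1}$ of $\ft^\sigma$ (the decomposition by $|l|$ together with the unique expression $f_l(M)=a(s)+b(s)M$ makes this sound) and conclude that $\phi$ sends a $\BC(M)$-basis to a $\BC(M)$-basis, getting injectivity and surjectivity simultaneously without invoking flatness at that point. The trade-off: your argument proves the stronger fact that $\ft^\sigma$ is free over $\BC[M^{\pm 1}]^\sigma$, at the cost of the bookkeeping you flag, while the paper's argument is shorter and reuses the flatness statement it needs anyway. Do make sure, when writing it up, to include the short linear-independence check for $\{1,M\}$ over $\BC[s]$ (e.g.\ via the degree-2 minimal polynomial $M^2-sM+1$ over $\BC(s)$), since the freeness of your basis of $\ft^\sigma$ rests on it.
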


\begin{proof} The extension from $\BC[M^{\pm 1}]^{\sigma}$ to $\BC(M)$ can be done in two steps: The first one is from
$\BC[M^{\pm 1}]^{\sigma}$ to $\BC[M^{\pm 1}]$ (note that $\BC[M^{\pm 1}]$ is free over $\BC[M^{\pm 1}]^{\sigma}$ since $\BC[M^{\pm 1}]=\BC[M^{\pm 1}]^{\sigma} \oplus M\BC[M^{\pm 1}]^{\sigma}$); the second step is from $\BC[M^{\pm 1}]$ to its field of fractions $\BC(M)$. Each step is a flat extension, hence $\BC(M)$ is flat over $\BC[M^{\pm 1}]^{\sigma}.$

It follows that the extension $(\ft^\sigma \hookrightarrow \ft)\otimes \BC(M)$ is still an injection, i.e.
\be
 \psi: \ft^\sigma \otimes _ {\BC[M^{\pm 1}]^\sigma} \BC(M) \to \ft\otimes _{ \BC[M^{\pm 1}] }  \BC(M) = \tilde\ft, \quad \psi(x\otimes y)= xy,
\notag
\ee
is  injective. Let us show that $\psi$ is surjective.
For every $n\in \BZ$,
\begin{eqnarray*}
L^n &=&  \psi \left ((ML^n+M^{-1}L^{-n}) \otimes  \frac{1}{M-M^{-1}}  -  (L^n+L^{-n}) \otimes \frac{M^{-1}}{M-M^{-1}}\right ).
\end{eqnarray*} Since $\{L^n\}_{n \in \BZ}$ generates $\tilde{\ft}=\BC(M)[L^{\pm 1}]$, $\psi$ is surjective.
Thus $\psi$ is an isomorphism.
\end{proof}

Consider the exact sequence \eqref{eq.orig0}. The ring $\BC[\chi(X)]$ has a $\ft^{\sigma}$-module structure via the algebra homomorphism $\theta: \BC[\chi(\partial X)]  \equiv \ft^{\sigma} \to \BC[\chi(X)]$, hence a
$\BC[M^{\pm 1}]^{\sigma}$-module structure since  $\BC[M^{\pm 1}]^{\sigma}$ is a subring of $\ft^{\sigma}$. By Lemma \ref{important}, $ \tilde{\ft} = \ft^{\sigma} \otimes _{\BC[M^{\pm 1}]^{\sigma}} \BC(M).$ It follows that $\tilde \fp = \fp \otimes _{\BC[M^{\pm 1}]^{\sigma}} \BC(M).$ Hence by taking the tensor product over $\BC[M^{\pm 1}]^\sigma$ of the exact sequence \eqref{eq.orig0} with $\BC(M)$, we get the exact sequence
\be
0 \to \bp \to \bt \overset{\btheta}\longrightarrow \widetilde{\BC[\chi(X)]},
\label{eq.orig}
\ee
where $\widetilde{\BC[\chi(X)]} :=  {\BC[\chi(X)]}\otimes_{\BC[M^{\pm 1}]^\sigma} \BC(M)$.
\begin{proposition}
The $B$-polynomial $B_K$ does not have repeated factors.
\label{B}
\end{proposition}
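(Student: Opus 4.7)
The plan is to show that $\widetilde{\fp}$ is a radical ideal in the principal ideal domain $\widetilde{\ft} = \BC(M)[L^{\pm 1}]$, which is equivalent to its generator $B_K$ being squarefree. By the exact sequence \eqref{eq.orig}, $\widetilde{\ft}/\widetilde{\fp}$ embeds into $\widetilde{\BC[\chi(X)]} = \BC[\chi(X)] \otimes_{\BC[M^{\pm 1}]^{\sigma}} \BC(M)$, so it suffices to prove the latter is reduced.

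First I would use that $\BC[\chi(X)]$ is reduced---being the ring of regular functions on the character variety, by the convention set in Subsection \ref{universaldef}---and decompose $\chi(X) = \bigcup_i C_i$ into irreducible components, yielding an injection $\BC[\chi(X)] \hookrightarrow \prod_i \BC[C_i]$. Write $u = M + M^{-1}$ so that $\BC[M^{\pm 1}]^{\sigma} = \BC[u]$. Tensoring the above injection with $\BC(M)$ over $\BC[u]$, which is flat by Lemma \ref{important}, preserves injectivity, so it suffices to verify each factor $\BC[C_i] \otimes_{\BC[u]} \BC(M)$ is reduced. I would then split into two cases. If $\theta(u)$ is constant on $C_i$, say equal to some $c \in \BC$, then $u - c$ vanishes in $\BC[C_i]$ but is a unit in $\BC(M)$, so the tensor product is zero and trivially reduced. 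Otherwise $\BC[u] \hookrightarrow \BC[C_i]$, and the localization $\BC[C_i] \otimes_{\BC[u]} \BC(u)$ is a reduced $\BC(u)$-algebra (localization of a domain). Further tensoring with $\BC(M)$, which is a quadratic extension of $\BC(u)$ generated by $M$ with minimal polynomial $M^2 - uM + 1$ and discriminant $u^2 - 4$ invertible in $\BC(u)$, is an \'etale extension and hence preserves reducedness.

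Combining these, $\widetilde{\ft}/\widetilde{\fp}$ is reduced, so $\widetilde{\fp}$ is radical and $B_K$ is squarefree in $\widetilde{\ft}$. The normalization of $B_K$ as a primitive polynomial in $\BZ[M, L]$ (so that it has no $M$-only polynomial factor apart from the $\pm M^k$ ambiguity) then upgrades this to squarefreeness in $\BC[M, L]$. The hard part will be the component-wise reducedness analysis, particularly the \'etale argument in the non-constant case; once those local reductions are carried out, the proposition follows formally from the principal ideal domain structure of $\widetilde{\ft}$.
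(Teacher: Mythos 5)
Your proposal is correct, but it takes a genuinely different route from the paper's. Both arguments reduce to showing that $\widetilde{\fp}$ is radical and both pass through the intermediate field $\BC(u)=\BC(M+M^{-1})$, but from there they diverge. The paper stays entirely on the ideal side: it first checks directly that $\underline{\fp}:=\fp\otimes_{\BC[u]}\BC(u)$ is radical (if $(f\gamma)^2\in\fp$ then $f\gamma\in\fp$), then uses that $\underline{\ft}=\BC(u)[L^{\pm 1}]$ is a PID so $\underline{\fp}$ has a square-free generator $\gamma(L)$, and finally transports square-freeness across the extension $\BC(u)\to\BC(M)$ by the B\'ezout identity $f\gamma+g\gamma'=1$, which survives base change. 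You instead work on the coordinate-ring side: decompose $\chi(X)$ into irreducible components $C_i$, inject $\BC[\chi(X)]$ into $\prod_i\BC[C_i]$, and show each $\BC[C_i]\otimes_{\BC[u]}\BC(M)$ is reduced by first localizing (a domain localizes to a domain) and then invoking that $\BC(u)\to\BC(M)$ is a finite \'etale (separable) extension, hence preserves reducedness. The two arguments encode the same underlying fact---separability of $\BC(M)/\BC(u)$, automatic in characteristic zero---but yours is more geometric and requires the extra bookkeeping of the irreducible-component decomposition and the case split on whether $u$ is constant on $C_i$, while the paper's is more elementary (no \'etale machinery, no components) at the cost of an explicit derivative/B\'ezout computation. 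Your closing remark that square-freeness in $\BC(M)[L^{\pm 1}]$ plus primitivity of $B_K$ gives square-freeness in $\BC[M,L]$ is correct and worth making explicit, though the paper is content with the statement about $\widetilde{\fp}$ itself.
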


\begin{proof}

 We want to show that $\tilde{\fp}$ is radical, i.e. $\sqrt{\tilde{\fp}}=\tilde{\fp}.$ Here $\sqrt{\tilde{\fp}}$ denotes the radical of $\tilde{\fp}$.

Let $x:=M+M^{-1}$ and $$\underline{\ft} := \ft^{\sigma} \otimes _{\BC[M^{\pm 1}]^{\sigma}} \BC(x), \quad \underline{\fp} := \fp \otimes _{\BC[M^{\pm 1}]^{\sigma}} \BC(x).$$
Note that $\fp$, the kernel of $\theta: \ft^{\sigma} \to \BC[\chi(X)]$, is radical since the ring $\BC[\chi(X)]$ is reduced. We claim that $\underline{\fp}$ is also radical. Indeed, suppose $\gamma \in \underline{\ft}$ and $\gamma^2 \in \underline{\fp}.$ Then $\gamma^2=\delta/f$ for some $\delta \in \fp$ and $f \in \BC[x].$ It follows that $(f\gamma)^2=f\delta$ is in $\fp$. Hence $f\gamma \in \sqrt{\fp}=\fp$ which means $\gamma \in \underline{\fp}.$

Since $\underline{\ft}=\BC(x)[L^{\pm 1}]$ is a principal ideal domain, the radical ideal $\underline{\fp}$ can be generated by one element, say $\gamma(L) \in \BC(x)[L^{\pm 1}]$, which does not have repeated factors. Note that the polynomial $\gamma(L)$ and $\delta(L):=\gamma'(L)$, the derivative of $\gamma(L)$ with respect to $L$, are co-prime. Since $\BC(x)[L^{\pm 1}]$ is an Euclidean domain, there are $f, g \in \BC(x)$ such that $f\gamma+g\delta=1.$ It follows that $\gamma(L)$ and $\delta(L)$ are also co-prime in $\BC(M)[L^{\pm 1}].$ Hence the ideal $\tilde{\fp}=\underline{\fp} \otimes_{\BC(x)} \BC(M)$ in $\BC(M)[L^{\pm 1}]$ is radical. This means that the $B$-polynomial $B_K$ does not have repeated factors. \end{proof}

\begin{corollary}
For every knot $K$ one has
$$B_K=\frac{A_K}{\text{\em{its $M$-factor}}}.$$
\label{equal}
\end{corollary}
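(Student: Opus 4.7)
The plan is to identify $B_K$ with the ``$M$-reduced'' part $\tilde{A}_K := A_K/a(M)$ of $A_K$ --- where $a(M)$ is the largest factor of $A_K$ lying in $\BZ[M]$ --- by showing that both are square-free polynomials in $\BZ[M,L]$ with no $M$-only factor and the same zero locus in $\BC^2$, hence equal up to sign.

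First I would gather the inputs. By the definition in subsection \ref{Zariski}, $A_K$ is square-free, so $\tilde{A}_K$ is square-free as well and by construction carries no factor depending only on $M$. By Proposition \ref{B} just proved, $B_K$ is square-free. I would also normalize $B_K$ so that it has no $M$-only factor: since any polynomial in $\BZ[M]\setminus\{0\}$ is a unit in $\widetilde{\ft}=\BC(M)[L^{\pm 1}]$, dividing such a factor out of $B_K$ still gives a generator of $\widetilde{\fp}$, and after clearing denominators and rescaling I recover a representative in $\BZ[M,L]$ with coprime integer coefficients and no $M$-only factor --- the $\pm M^k$-ambiguity in the definition of $B_K$ is exactly what makes this rescaling legitimate. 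I would then use the geometric input of subsection \ref{Mfactor}: the zero sets $\{A_K=0\}$ and $\{B_K=0\}$ in $\BC^2$ agree outside lines of the form $\{M=c\}$. The vertical components of $\{A_K=0\}$ are precisely $\{a(M)=0\}$, so removing them produces $\{\tilde{A}_K=0\}$, while $\{B_K=0\}$ carries no vertical components thanks to the normalization. Hence $\{\tilde{A}_K=0\}=\{B_K=0\}$ as subsets of $\BC^2$.

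Finally, two square-free polynomials in $\BC[M,L]$ with the same vanishing locus are associates by the Nullstellensatz, and a Gauss-lemma argument using the coprime integer coefficients on both sides upgrades this proportionality to $\pm 1$, giving $B_K = \pm\,\tilde{A}_K = A_K/a(M)$ as claimed. The main delicate point is the normalization step: one must genuinely absorb every $M$-only factor of $B_K$ into the ambient $\pm M^k$-ambiguity before invoking the zero-locus comparison, for otherwise the matching of non-vertical components of the varieties determines $B_K$ only up to an extraneous $M$-only polynomial. Once that normalization is in place, the remainder is a routine square-free/Nullstellensatz argument.
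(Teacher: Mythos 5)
Your proof is correct, and it is essentially the argument the paper leaves implicit: Corollary~\ref{equal} is stated without a separate proof, being read off from Proposition~\ref{B} (square-freeness of $B_K$) together with the two observations at the start of Subsection~\ref{Mfactor}. Your route --- normalize $B_K$ to carry no $M$-only factor, compare the non-vertical parts of the zero loci, use square-freeness plus the Nullstellensatz to get proportionality over $\BC$, and finish with Gauss's lemma --- spells out exactly that.

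One small imprecision in the normalization step: you write that the $\pm M^k$-ambiguity in the definition of $B_K$ ``is exactly what makes this rescaling legitimate,'' but a general $M$-only factor such as $M-1$ is not of the form $\pm M^k$, so that ambiguity by itself does not authorize removing it. What does make the step legitimate is the observation you make just before: any nonzero element of $\BZ[M]$ is a unit in $\widetilde{\ft}=\BC(M)[L^{\pm 1}]$, so dividing it out of $B_K$ still produces a generator of $\widetilde{\ft}\,\fp$, and a primitive integer representative of that generator is again a valid choice of $B_K$. In effect, the paper's claim that $B_K$ is ``defined up to $\pm M^k$'' already presupposes that $B_K$ has been normalized to carry no $M$-only factor (and no pure power of $L$); that is precisely the convention under which the corollary's equality holds literally, and under which your zero-locus argument is airtight.
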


Here the $M$-factor of $A_K$ is the maximal factor of $A_K$ depending on $M$ only; it is defined up to a non-zero complex number.

\subsection{Small knots}

A knot $K$ is called \textit{small} if its complement $X$ does not contain closed essential surfaces. It is known that all two-bridge knots and all three-tangle pretzel knots are small \cite{HT, Oe}.

\begin{proposition}
Suppose $K$ is  a small knot. Then the $A$-polynomial $A_K$ has trivial $M$-factor. Hence the $A$-polynomial and $B$-polynomial of a small knot are equal.
\label{small}
\end{proposition}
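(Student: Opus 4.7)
The plan is to argue by contradiction using Culler-Shalen theory of ideal points of character curves. Suppose $A_K$ has a non-trivial $M$-factor. Then for some $c\in\BC^*$ the line $V_c:=\{M=c\}\subset\BC^2$ lies in the zero set of $A_K$, hence in $\overline{\hat Z}$. By the definition of $\hat Z$ as the $\tau$-lift of the image of the restriction map $\rho:\chi(X)\to\chi(\partial X)$, the line $V_c$ must arise as the $\rho$-image of some irreducible component $Y\subseteq\chi(X)$ of positive dimension. Restricting if necessary, I would pick a one-dimensional affine irreducible subvariety $C\subseteq Y$ with $\rho(C)=V_c/\tau$, so that on $C$ the trace $\tr\mu=c+c^{-1}$ is constant while $\tr\lambda=L+L^{-1}$ is non-constant.

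Next, let $\tilde C$ be the smooth projective completion of $C$. Because $\tr\lambda$ is a non-constant regular function on $C$, it has a pole at some ideal point $\xi\in\tilde C\setminus C$; let $v_\xi$ denote the corresponding discrete valuation on $\BC(C)$. Then $v_\xi(\tr\lambda)<0$ while $v_\xi(\tr\mu)=0$. Applying Culler-Shalen theory \cite{CS} at $\xi$ yields a non-trivial simplicial action of $\pi_1(X)$ on a tree $T$ together with an essential surface $S\subseteq X$ dual to the action, under which $\mu$ fixes a vertex of $T$ while $\lambda$ acts as a hyperbolic translation. Since $\mu$ and $\lambda$ commute, a standard Bass-Serre argument forces $\mu$ to fix the translation axis of $\lambda$ pointwise, so the boundary slope of $S$ on $\partial X$ is exactly the meridional slope.

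To finish, I would derive a contradiction from smallness of $K$. The existence of an essential surface $S\subset X$ with meridional boundary slope forces $K$ to be composite, $K=K_1\#K_2$ with both summands non-trivial: capping off each meridional boundary circle of $S$ with a meridional disk in $N(K)$ produces a closed surface $\hat S\subset S^3$ transverse to $K$, and an innermost-disk analysis using Alexander's theorem together with the incompressibility of $S$ produces a 2-sphere meeting $K$ in two points that realizes a non-trivial connected sum decomposition. But every composite knot has a closed essential swallow-follow torus in its exterior, contradicting the smallness hypothesis. Hence $A_K$ has no non-trivial $M$-factor, and Corollary \ref{equal} immediately gives $B_K=A_K$. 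The main obstacle in this plan is the final topological step: verifying that a meridional-boundary essential surface in a knot complement forces a composite structure. The Culler-Shalen step itself is routine once one uses the commutation $[\mu,\lambda]=1$ to pin down the meridional slope as the unique elliptic direction of $\pi_1(\partial X)$.
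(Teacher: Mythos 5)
The first half of your argument is correct: unwinding the definition of $\hat Z$ to get a curve $C\subset\chi(X)$ on which $\tr\mu$ is constant and $\tr\lambda$ is non-constant, passing to an ideal point $\xi$ of $\tilde C$ with $v_\xi(\tr\lambda)<0\le v_\xi(\tr\mu)$, invoking Culler--Shalen theory to get a tree action and a dual essential surface, and using $[\mu,\lambda]=1$ to pin down the meridian as the detected boundary slope. This is exactly the argument underlying the theorem of \cite{CCGLS} that the slopes of the Newton polygon of $A_K$ are boundary slopes of $X$ (your case is slope $\infty$), which the paper invokes as a black box. So up to the point of producing an essential surface $S\subset X$ with meridional boundary slope, you and the paper coincide.

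Your final step is a genuine gap, and not merely the technical obstacle you flagged. The claim that an essential surface with meridional boundary slope forces $K$ to be composite is false. A prime knot that admits an essential Conway sphere has in its exterior an incompressible, boundary-incompressible $4$-punctured sphere with meridional boundary, so the implication ``essential meridional surface $\Rightarrow$ composite'' fails in general. Moreover, nothing in the Culler--Shalen construction controls the genus of $S$ or the number of its boundary components, so capping $\partial S$ with meridian disks need not produce a sphere at all, and the proposed innermost-disk analysis has nothing to start from. The paper avoids compositeness entirely by appealing to \cite[Theorem 2.0.3]{CGLS}: that theorem implies the exterior of a small knot cannot have boundary slope $\infty$, because meridional Dehn filling produces $S^3$ (which is neither reducible nor Haken and has trivial fundamental group) while smallness rules out the remaining alternative of a closed essential surface in $X$. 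That one citation replaces the whole final paragraph of your proposal.
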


\begin{proof}
The $A$-polynomial $A_K$ always contains the factor $L-1$ coming from characters of abelian representations \cite{CCGLS}. Hence we write $A_K=(L-1)A_{\emph nab}$ where $A_{\emph nab}$ is a polynomial in $\BC[M,L].$

Suppose the polynomial $A_{\emph nab}$ of a knot has non-trivial $M$-factor, then the Newton polygon of $A_{\emph nab}$ has the slope infinity. It is known that every slope of the Newton polygon of $A_{\emph nab}$ is a boundary slope of the knot complement \cite{CCGLS}. Hence the knot complement has boundary slope infinity. The complement of a small knot in $S^3$ does not have boundary slope infinity (this fact follows easily from \cite[Theorem 2.0.3]{CGLS}), hence its polynomial $A_{\emph nab}$ has trivial $M$-factor.
\end{proof}

\begin{remark}
By \cite{Mattman}, according to a calculation by Culler, there exists a non-small knot whose $A$-polynomial has non-trivial $M$-factor; it is the knot $9_{38}$ in the Rolfsen table.
\end{remark}

\section{Skein modules and the AJ conjecture}

Our proofs of the main theorems are based on the
fact that the KBSM is a quantization of the $SL_2(\BC)$-character variety \cite{Bul,PS} which has
been exploited in the work of Frohman, Gelca and Lofaro
\cite{FGL} where they defined the non-commutative $A$-ideal. In this section we will discuss the role of localized skein modules in our approach to the  AJ conjecture, and then prove Theorems \ref{t1} and \ref{t2}.

\label{AJ}

\def\CC{\mathcal C}
\subsection{Ring extensions} Suppose $R_1$ is a ring (with unit), $\CC$ is an $R_1$-complex, and $R_2$ is an $R_1$-algebra. We will say that $R_2 \otimes_{R_1} \CC$ is obtained from $\CC$ by a change
of ground ring.

Recall that $\CR=\BC[t^{\pm 1}]$.
We often consider $\BC$ as an $\CR$-algebra by $\BC \equiv \CR/((1+t))$. In this case, we use the notation $\ve(\CC):= \CC\otimes_\CR \BC$, where $\CC$ is an $\CR$-complex or an $\CR$-module.
Thus if $\CM$ is an $\CR$-module, then
$$ \ve(\CM) = \CM/((1+t)\CM).$$
If $\hat \CR$ is an $\CR$-algebra and $\CM$ is an $\hat \CR$-module, then one can easily see that
$$
\ve(\CM) = \CM \otimes_{\hat \CR} \ve(\hat \CR).
$$

\subsection{Skein modules as quantizations of character varieties}
\label{quan}
\def\CN{\mathcal N}

\no{
Recall that $\CR=\BC[t^{\pm1}]$.
For $\CR$-modules $\CM, \CN$ and $\CR$-morphism $f: \CM \to \CN$ let
$$\ve(\CM)= \CM/(1+t)\CM= \CM \otimes_\CR \BC,$$ where $\BC$ is considered as a $\CR$-module by the reduction $t=-1$, and
$\ve(f): \ve(\CM) \to \ve(\CN)$ be the corresponding $\BC$-morphism. If $\hat \CR$ is an $\CR$-algebra and $\CM$ is an $\hat \CR$-module, then one can easily see that
\be
\ve(\CM) = \CM \otimes_{\hat \CR} \ve(\hat \CR), \label{eq.17}.
\ee
}
\def\fs{\mathfrak s}
\def\univ{\mathrm{univ}}
 An important result \cite{Bul,PS} in the theory of skein modules is that
$\fs(Y):=\varepsilon (\CS(Y))$, the skein module at $t=-1$, has a natural $\BC$-algebra structure and is isomorphic to the universal $SL_2$-character algebra $\BC^\univ[\chi(Y)]$ of $\pi_1(Y)$. The product of two links in $\fs(Y)$ is their disjoint union, which  is well-defined when $t=-1$. The isomorphism between
$\fs(Y)$ and the universal $SL_2$-character algebra of $\pi_1(Y)$ is given by $K(r)= -\text{tr}\, r(K)$,
where $K$ is a knot in $Y$ representing an
element of $\pi_1(Y)$ and $r:\pi_1(Y) \to
SL_2(\BC)$ is a representation of $\pi_1(Y)$.
The quotient of $\fs(Y)$
 by its nilradical is canonically isomorphic to
$\BC[\chi(Y)]$, the ring of regular functions on the $SL_2$-character
variety of $\pi_1(Y)$.

For the case when $\CS=\CS(X)$, where $X$ is the knot complement, we have
\be
\ve(\CT^\sigma \overset{\Theta}{\longrightarrow} \CS)= \ (\ft^\sigma \overset{\theta}{\longrightarrow} \fs),
\label{eq.00}
\ee
where $\fs= \fs(X)\ =  \BC^\univ[\chi(X)]$.

In many cases $\fs(Y)$ is reduced, i.e. its nilradical is zero,
and hence $\fs(Y)$ is exactly the ring of
regular functions on the $SL_2$-character variety of $\pi_1(Y)$. For
example, this is the case when $Y$ is a torus, or when $Y$ is the
complement of a two-bridge knot/link \cite{Le06, PS, LT}, or when $Y$ is the complement of the $(-2,3,2n + 1)$-pretzel knot for any integer $n$ (see Section \ref{pretzel} below). We conjecture that

\begin{conjecture} For every knot $K$ in $S^3$, the universal $SL_2$-character ring of $\pi_1(S^3 \setminus K)$ is reduced.
\label{c3}
\end{conjecture}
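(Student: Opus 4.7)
The goal is to prove Conjecture \ref{c3}, which asserts that for every knot $K \subset S^3$ the universal $SL_2$-character ring $\fs(X) = \BC^{\univ}[\chi(X)]$ of $X = S^3 \setminus N(K)$ has trivial nilradical. My plan is to combine the Brumfiel--Hilden / Przytycki--Sikora identification $\fs(X) \cong \BC^{\univ}[\chi(X)]$ recalled in Subsection \ref{quan} with an inductive gluing approach that builds the knot complement up from pieces whose universal character rings are already known to be reduced (two-bridge, torus, and the pretzel cases treated in Section 4).

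First I would write $\fs(X) = \BC[x_{g_1},\ldots,x_{g_k}] / I$, where the $g_i$ are Wirtinger generators of $\Gamma = \pi_1(X)$ and $I$ is the ideal generated by the Wirtinger relators together with the Fricke / Cayley--Hamilton trace identities, so the task becomes $\sqrt{I} = I$. Since reducedness is a local property, it suffices to check that the stalk of $\fs(X)$ at every associated prime is reduced. I would split $\chi(X)$ into its abelian component (a rational curve parametrised by the meridian trace) and its non-abelian components, dispose of the abelian part by direct inspection, and concentrate the effort on the non-abelian part.

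For the non-abelian components I would pursue two complementary strategies. The geometric strategy uses the JSJ decomposition of $X$: along each essential torus the character scheme is a fibre product over $\chi(\BT^2) \cong (\BC^*)^2/\tau$ of the character schemes of the pieces, and since the conjecture is known for all JSJ pieces that commonly arise inside knot complements, one can hope to propagate reducedness from the pieces to the whole via a careful analysis of the gluing maps. The algebraic strategy uses the skein algebra $\CS(X)$ as an $\CR$-flat deformation of $\fs(X)$: one would try to show that $\CS(X) \otimes_{\CR} \BC(t)$ is semiprime and then deduce reducedness at $t=-1$ by a base-change argument built on the identity $\ve(\CM) = \CM \otimes_{\widehat\CR} \ve(\widehat\CR)$ of \eqref{eq.17}, using upper semicontinuity of fibre dimensions to rule out embedded components at the special fibre.

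The main obstacle is the gluing step. When two pieces are glued along an incompressible torus, the resulting character ring is the tensor product of the factor rings over $\BC[\chi(\BT^2)] = \ft^\sigma$, and tensor products of reduced rings over a reduced base need not be reduced: nilpotents can appear whenever the restriction maps from the pieces' character rings to $\ft^\sigma$ fail to be flat at the gluing characters, which is precisely what happens at the binary-dihedral and parabolic boundary characters that tend to be most interesting. Controlling this would require a fine analysis of the singular locus of each piece's character variety along its toral restriction, and I expect the fully general statement to remain out of reach until one has a structural theorem --- for instance, a generic Azumaya-type result for $\CS(X)$ over the centre of its localisation at $t=-1$ --- that constrains these gluing singularities in a uniform way.
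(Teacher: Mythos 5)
This statement is labelled a \emph{conjecture} in the paper precisely because no proof is known: the authors state it, prove it for specific families (two-bridge knots by earlier work, and the $(-2,3,2n+1)$-pretzel knots in Section~\ref{pretzel} via an explicit computation of the ideal $(P,Q_n)$ and Seidenberg's criterion), but they do not claim a general argument. So there is no ``paper's own proof'' to compare against, and your task here should really have been to notice that fact.

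Your write-up is honest about this --- you end by conceding that the general statement is ``out of reach'' pending a structural theorem --- so what you have produced is a survey of plausible strategies, not a proof. Since you cannot both prove a statement and declare it out of reach, the proposal as a whole does not establish the conjecture. Concretely: the ``local at associated primes'' reduction is vacuous without a way to control embedded primes, which is exactly what reducedness is about; the JSJ strategy founders on the very non-flatness of the restriction maps to $\ft^\sigma$ that you flag, and knot complements are frequently atoroidal anyway so there is nothing to glue; and the flat-deformation strategy would require knowing that the skein module $\CS(X)$ is flat (or at least torsion-free) over $\CR$ near $t=-1$ and that specialisation does not create nilpotents, neither of which is known in general. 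The paper's actual contribution on this front is narrower and more concrete: for the pretzel family it exhibits the universal character ring as $\BC[x,y,z]/(P,Q_n)$, shows this quotient is free over $\BC[x]$ (Propositions~\ref{torsion-free}--\ref{free}), reduces to a single fibre $x=0$ (Proposition~\ref{x0}), and verifies reducedness of that zero-dimensional fibre by producing square-free elements in $\BC[y]$ and $\BC[z]$. If you want to make progress, emulating that template --- explicit presentation, freeness over a one-variable subring, reduction to a good fibre --- for other families is far more tractable than the general gluing or deformation programmes you sketch.
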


\subsection{The non-reduced kernel}
Extending the right hand side of \eqref{eq.00} from the ground ring ${\BC[M^{\pm 1}]}^{\sigma}$ to $\BC(M)$, we get
\be
(\bbt \overset{\bbtheta}{\longrightarrow} \bbs) :=(\ft^\sigma \overset{\theta}{\longrightarrow} \fs)\otimes _{\BC[M^{\pm 1}]^{\sigma}} \BC(M).
\label{eq.00a}
\ee
We call the $\BC(M)$-vector space $\bbs$ the \textit{localized universal character ring} of $\pi_1(X)$.

Let $\bbp:= \ker \bbtheta \subset \bbt= \BC(M)[L^{\pm1}]$. We have the commutative diagram with exact rows
$$
\begin{CD}
0 @>>> \bbp @>>> \bbt @>\bbtheta >> \bbs \\
@. @VVV @| @V q VV \\
0 @>>>\bp @>>> \bt  @>\btheta >> \bbs/\sqrt0
\end{CD}
$$
where $q$ is the quotient map. Note that the second row of the above diagram is exactly the sequence \eqref{eq.orig}.

Both $\bbp$ and $\bp$ are ideals in the principal ideal domain
$\bbt=\bt$. Recall that $B_K$ is a generator of $\bp$. Let $\ov B_K$ be a generator of  $\bbp$. 
\begin{lemma}
One has $B_K \mid \ov B_K \mid (B_K)^l$ for some positive integer $l$. Consequently, $\bbt/\bbp$ is a finite dimensional $\BC(M)$-vector space.
\label{finite}
\end{lemma}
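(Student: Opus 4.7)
The plan is to read both divisibilities directly off the commutative diagram displayed immediately before the lemma. The bottom row is obtained from the top row by composing $\bbtheta$ with the canonical quotient $q\colon \bbs \twoheadrightarrow \bbs/\sqrt{0}$, so $\btheta = q \circ \bbtheta$, and consequently $\ker \bbtheta \subseteq \ker \btheta$, i.e.\ $\bbp \subseteq \bp$ as ideals of the common principal ideal domain $\bt = \bbt = \BC(M)[L^{\pm 1}]$. Translating this containment to generators yields the first divisibility $B_K \mid \overline{B_K}$.

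For the reverse divisibility I would use that $q$ is exactly the quotient by the nilradical of $\bbs$. Since $B_K \in \bp = \ker \btheta$, the element $\bbtheta(B_K) \in \bbs$ lies in $\ker q = \sqrt{0}$, so it is nilpotent in $\bbs$. Pick $l \geq 1$ with $\bbtheta(B_K)^l = 0$; then $\bbtheta(B_K^l) = 0$, so $B_K^l \in \ker \bbtheta = \bbp$, and hence $\overline{B_K} \mid B_K^l$.

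The finite-dimensionality claim then follows formally. Because $B_K$ is non-zero by the construction of the $B$-polynomial in Section \ref{B-polynomial}, the relation $\overline{B_K} \mid B_K^l$ forces $\overline{B_K} \neq 0$ as well. Hence $\bbt/\bbp = \BC(M)[L^{\pm 1}]/(\overline{B_K})$ is the quotient of a PID by a non-zero principal ideal, so it is a finite-dimensional $\BC(M)$-vector space; after absorbing a unit $L^a$, its dimension equals the difference between the highest and lowest $L$-exponents appearing in $\overline{B_K}$.

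There is really no serious obstacle in this argument; the only point that requires a moment's thought is recognising that $\ker q$ is precisely the nilradical of $\bbs$, which is what converts the statement "$B_K$ is killed after modding out $\sqrt{0}$" into the concrete assertion "$B_K^l \in \bbp$ for some $l$." Everything else is routine bookkeeping with ideals in the PID $\BC(M)[L^{\pm 1}]$.
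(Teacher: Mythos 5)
Your proof is correct and follows essentially the same route as the paper: both divisibilities are read off the commutative diagram via $\btheta = q \circ \bbtheta$ with $\ker q = \sqrt{0}$, and the finite-dimensionality follows from $\overline{B_K} \neq 0$ in the PID $\BC(M)[L^{\pm 1}]$. The only superficial difference is that the paper justifies $B_K \neq 0$ by pointing to $A_K \neq 0$, whereas you appeal to the construction of $B_K$ directly; both are fine.
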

\begin{proof}
 Since $\bbtheta(\ov B_K)=0$, one has $\btheta(\ov B_K)=0$. This implies $\ov B_K \in \bp$, and hence $B_K \mid \ov B_K$.

 Since $\btheta(B_K)=0$, one has $\bbtheta(B_K) \in \sqrt 0$. It follows that $(B_K)^l \in \ov{\fp}$ for some positive integer $l$, and hence $\ov B_K \mid (B_K)^l$.

 Note that $B_K \neq 0$ (since $A_K \neq 0$), hence we also have $\ov B_K \neq 0$. If  $\ov B_K=L^d + \sum_{j=0}^{d-1} a_j(M) L^j$, with $a_j(M) \in \BC(M)$ and $d \ge0$, then the dimension of the $\BC(M)$-vector space $\bbt/\bbp$ is $d$.
\end{proof}

\subsection{Localization}

Let $D:=\CR [M^{\pm 1}]=\BC [t^{\pm1},M^{\pm 1}]$ and $\ov{D}$ be its localization at $(1+t)$:
$$\ov{D}:=\left\{\frac{f}{g} \mid f,g \in D, \, g \not \in (1+t)   D     
\right\}
,$$
which is a discrete valuation ring and  is flat over $D$.

The ring $D=\CR [M^{\pm 1}]$ is flat over $D^\sigma=\CR [M^{\pm 1}]^\sigma$, where $\sigma(M)=M^{-1}$, since it is free over $\CR [M^{\pm 1}]^\sigma$:
$$ \CR [M^{\pm 1}]=  \CR [M^{\pm 1}]^\sigma \oplus M \, \CR [M^{\pm 1}]^\sigma .$$

The quantum torus $\CT$ is a $D^\sigma$-algebra. Let
$ \ov{\CT}:=\CT \otimes_{D^\sigma} \ov{D}$. Similar to Lemma \ref{important}, we have
$$\ov{\CT} =\left\{\sum_{j \in \BZ} a_j(M)L^j \mid a_j(M) \in \ov{D},~a_j=0\text{~almost everywhere}\right\},$$
with commutation rule $a(M)L^{k} \cdot b(M)L^{l}=a(M)b(t^{2k}M)L^{k+l}$.

\subsection{The localized skein module} Let $\CS:=\CS(X)$ be the skein module of the knot complement $X$.
\begin{defn}
The \textit{localized skein module} of the knot complement $X$ is the $\bD$-module $\bS := \CS \otimes_{D^\sigma} \bD$.
We say that $\bS$ is finitely generated if it is finitely generated as a $\bD$-module.
\end{defn}

Recall from Subsection \ref{OP} that we have the map $\Theta: \CT^\sigma \to \CS$, which is considered as a
$D^\sigma$-morphism. Let $\bTheta: = \Theta \otimes_{D^\sigma} \bD$, i.e.
$$
( \bT \overset{\bTheta}\longrightarrow  \bS) \quad = \quad \big( \CT^\sigma \overset{\Theta}\longrightarrow \CS \big) \otimes_{D^\sigma} \bD.
$$

\def\IM{\mathrm{Im}}

\begin{lemma}
\label{assumption}
One has
$$
\ve( \bT \overset{\bTheta} \longrightarrow \bS)= (\bbt \overset{\bbtheta} \longrightarrow \bbs).
$$
\end{lemma}

\begin{proof} Note that $\BC(M)$ is a $D^\sigma$-algebra by the composition of two maps
\be
D^\sigma \hookrightarrow \bD \to \ve(\bD)= \BC(M).
\notag 
\ee
Hence $\otimes _{D^\sigma} \BC(M)$ is the composition of two tensor products
\be
 (\CT^\sigma \overset{\Theta}\longrightarrow \CS) \otimes _{D^\sigma} \BC(M)= \ve \big(  (\CT^\sigma \overset{\Theta}\longrightarrow \CS) \otimes_{D^\sigma} \bD    \big) = \ve (\bT \overset{\bTheta}\longrightarrow  \bS)
 \label{eq.30a}.
\ee

The same $D^\sigma$-algebra structure of $\BC(M)$  can also be obtained by the composition of
\be
D^\sigma  \to \ve (D^\sigma)= \BC[M^{\pm1}]^\sigma   \hookrightarrow  \BC(M).
\notag 
\ee
Hence the  left hand side of \eqref{eq.30a} can be written as
 \be
 (\CT^\sigma \overset{\Theta}\longrightarrow \CS) \otimes _{D^\sigma} \BC(M) = \big( \ve ( \CT^\sigma \overset{\Theta}\longrightarrow \CS )\big) \otimes_{\BC[M^{\pm1}]^\sigma}\BC(M) = (\bbt \overset{\bbtheta} \longrightarrow \bbs),
 \label{eq.30}  \ee
where the last identity follows from the definitions \eqref{eq.00} and \eqref{eq.00a}.

The lemma follows from  \eqref{eq.30a} and \eqref{eq.30}. \end{proof}

\subsection{Left ideals of $\bT$} Recall that $\bT =  \bD[L^{\pm 1}]$ is the set of all Laurent polynomials
$$ \sum_{j \in \BZ} a_j(M)L^j, \quad    a_j(M) \in \ov{D} \text{ and } a_j=0  \text { for almost every }j \in \BZ.$$
Let $\bT_+$ be the subring of $\bT$ consisting of all polynomials in $L$, i.e. polynomials like the above  with $a_j(M)=0$ if $j <0$.
For $f, \, g$ in $\bT_+$, we say that $f$ is divisible by $g$ and write $g \mid f$ if there exists $h \in \bT_+$  such that $f=hg$. 

Although the ring $\bT$ is not a left PID, we have the following description of its ideals.

\begin{proposition} \label{lem.ideal}

Suppose $I\subset \bT$ is a non-zero left ideal.
There are $h_0,\dots, h_{m-1} \in \bT_+$ with leading coefficients 1 and $\gamma \in \bT_+$ 
such that
$I$ is generated by $\{ h_0 \gamma, (1+t) h_{1} \, \gamma, \dots, (1+t)^{m-1} h_{m-1}  \gamma, (1+t)^m \gamma\}$.
Besides, $1\le \deg_L(h_{j+1}) \le  \deg_L(h_j)$ for $j= 0,\dots, m-2$; and $\gamma$ is the generator of the principal left ideal $\tilde I=I \cdot \tilde \CT \subset \tilde \CT$.

\end{proposition}

\begin{proof} Note that any ideal  of $\bD$  is a power of the prime ideal $(1+t)$. Suppose $\CM$ is a $\bD$-module. We say that $u\in \CM$ has {\em height} $k\in \BZ_{\ge 0}$ if
$$u\in (1+t)^k \CM \setminus (1+t)^{k+1} \CM.$$

We have the following {\em weak division algorithm}: Suppose $f,g \in \bT_+$, with $g\neq0$. Assume that  $\deg (f) \ge \deg (g)$, and  the height of the leading coefficient of $f$ is
 greater than or equal to that of the leading coefficient of $g$.
Then there are $q, r\in \bD[L]$ such that
$$ f = q g + r,$$
where $\deg(r) < \deg(f)$. Here we use the convention that $\deg(0)= -\infty$. In fact, one can take $q= a\, L^k$, where $a$ is the quotient of the leading of $f$ by that of $g$, and
$k$ is the difference between the degree of $f$ and that of $g$.

We will frequently use the weak division algorithm with $f,g \in I$. Then the remainder $r$ is also in $I$.

Let $I_+= I \cap \bT_+$, and
$I_n$ be the set of all elements in $I_+\setminus \{0\}$ whose leading coefficient has height $n$. By definition
$$
I_+\setminus \{0\} = \sqcup_{n=0}^\infty \, I_n.
$$
Choose the smallest integer $N \ge 0$ such that $I_N \not= \emptyset$.

\smallskip

\underline{Claim 1}:  $I_n \not= \emptyset$ for every $n \ge N$.

\begin{proof} (of Claim 1)
Since $(1+t)^{n-N} I_N \subset I_n$ for every $n \ge N$, each $I_n \neq \emptyset$.
\end{proof}

Suppose $d_n$ is the least $L$-degree of elements of $I_n$, and choose $f_n \in I_n$ such that the degree of $f_n$ is $d_n$. The choice of $f_n$ guarantees that if $f\in I_n$ than one can divide $f$ by $f_n$ using the
weak division algorithm.

Since $(1+t) I_n \subset I_{n+1}$, we have  $d_N \ge d_{N+1} \ge d_{N+2} \ge \dots  $. Hence the sequence of decreasing non-negative integers $d_N, d_{N+1},\dots$ eventually stabilizes. Let $m \ge 0$ be the smallest integer such that $d_{N+m}= d_{N+m+j}$ for every $j =0,1,2, \dots$.

\smallskip

\underline{Claim 2}: If $f \in I_+\setminus \{0\}$  has degree $< d_j$, then $f \in I_{j'}$ for some $j' >j$.

\begin{proof} (of Claim 2) Since $I_+\setminus \{0\} = \sqcup_{n=0}^\infty \, I_n$, $f \in I_{j'}$ for some $j'$. If $j ' \le j$ then $\deg(f) \ge d_{j'} \ge d_j$, a contradiction. Hence  $j' > j$.
\end{proof}

\underline{Claim 3}: If $f \in I_+  \setminus \{0\}$ has degree $\le d_{N+m}$, then $f$ is divisible by $f_{N+m}$.

\begin{proof} (of Claim 3) Suppose $f \in I_+  \setminus \{0\}$ has degree $\le d_{N+m}$. Since $\deg(f)<d_{N+m-1}$, by Claim 2, $f \in I_j$ for some $j \ge N+m$. Note that $\deg(f) \ge d_j=d_{N+m}$. Dividing $f$ by $f_{N+m}$ using the weak division algorithm, the remainder $r$ has degree $<\deg(f)=d_{N+m}$. Since there are no elements in  $I_+  \setminus \{0\}$ of degree $< d_{N+m}$, we must have $r=0$, which implies that $f$ is divisible by $f_{N+m}$.
\end{proof}

 For $0 \le j \le m$ let $I^{(j)}$ be the left ideal of $\bT_+$ generated by $f_{N+j}, f_{N+j+1}, \cdots, f_{N+m}$.

\smallskip

\underline{Claim 4}: Suppose $f \in I_+\setminus \{0\} $ has degree $< d_{N+j}$, where $0 \le j < m$, then $f \in I^{(j+1)}$.

\begin{proof} (of Claim 4) We use induction on the degree of $f$.  Suppose $f \in I_+\setminus \{0\}$  has degree $< d_{N+j}$ for $0 \le j <m$. Then, by Claim 2, $f \in I_{j'}$ for some $j' \ge j+1$. Dividing $f$ by $f_{N+j'}$ using the weak division algorithm, the remainder has degree $<\deg(f)<d_{N+j}$, hence, by induction,
 it belongs to $I^{(j+1)}$.

If $j'>m$ then, by Claim 3, $f_{j'}$ is divisible by $f_{N+m}$. Otherwise, i.e. if $j' \le m$, then $f_{j'}$ belongs to $I^{(j+1)}$. Hence we always have $f_{j'} \in I^{(j+1)}$. It follows that $f \in I^{(j+1)}$.
\end{proof}

\underline{Claim 5}: $I_+= I^{(0)}$, i.e. $I_+$ is generated by $\{ f_j\mid j=N,\dots, N+m\}$.

\begin{proof} (of Claim 5) We use induction on the degree of $f \in I_+$. If the degree of $f < d_N$, then $f\in I^{(1)}$ by Claim 4. Suppose the degree of $f$ is $\ge d_N$. Dividing $f$ by $f_N$ using the weak division algorithm, the remainder has degree  less than
   that of $f$ and hence belongs to $I^{(0)}$ by induction hypothesis. Thus $f \in I^{(0)}$.
\end{proof}

\underline{Claim 6}:  For every $0 \le j \le m$, $f_{N+m}$ divides $(1+t)^{m-j} f_{N+j}$.

\begin{proof} (of Claim 6)  We use induction,  beginning with the case $j=m$ which is obvious. Suppose $j \le m-1$. Dividing $(1+t)f_{N+j}$ by $f_{N+j+1}$ using the weak division algorithm, the remainder $r$ is an element in $I_+$ of degree $< d_{N+j}$. By Claim 4, $r$ is an element in $I^{(j+1)}=( f_{N+j+1}, \cdots, f_{N+m}) \subset I_+$. It follows that $(1+t)f_{N+j}$ is an element in $I^{(j+1)}$.  By induction hypothesis, every element in $(1+t)^{m-j-1}I^{(j+1)}$ is divisible by $f_{N+m}$. In particular, $(1+t)^{m-j}f_{N+j}$ is divisible by $f_{N+m}$.
\end{proof}

\textit{End of Proof of Proposition \ref{lem.ideal}.} By Claim 6, $(1+t)^m f_N = h_0 f_{N+m}$, for some $h_0\in \bT_+$. Comparing the leading coefficients, we see that the leading coefficient of $h_0$ is 1.
  From $(1+t)^{m}f_N = h_0 f_{N+m}$, with $h_0$ having leading coefficient 1, one can easily show that $f_{N+m}$ is divisible by $(1+t)^m$. Hence $f_{N+m} = (1+t)^m \gamma$, where $\gamma\in \bT_+$.

By Claim 6, for each $0 \le j \le m-1$ there is $h_j \in \bT_+$ (whose leading coefficient is 1) such that $(1+t)^{m-j} f_{N+j}= h_j f_{N+m} = (1+t)^m h_j \gamma$, i.e. $f_{N+j}=(1+t)^j h_j \gamma$.
  \end{proof}

\subsection{Assumption $\bbtheta$ is surjective and $\bS$ is finitely generated}

\begin{lemma}\label{lem.surj}
Suppose $\bbtheta$ is surjective and $\bS$ is finitely generated. Then $\bTheta$ is surjective.
\end{lemma}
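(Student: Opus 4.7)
My plan is to deduce Lemma \ref{lem.surj} from Nakayama's lemma, after first establishing that $\bS_\fin$ is finitely generated as a $\bD$-module. The route to finite generation runs through a structure theorem for countably generated reduced modules over a discrete valuation ring.

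First I would identify the residue. Lemma \ref{finite} gives $\dim_{\BC(M)} \bbt/\bbp = d$ for some finite $d$, and the hypothesis that $\bbtheta$ is surjective then yields $\ve(\bS_\fin) = \bbs \cong \bbt/\bbp$, of dimension $d$ over $\BC(M)$.

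Next I would show $\bS_\fin$ is finitely generated. The skein module $\CS(X)$ is spanned over $\CR$ by the countable set of isotopy classes of framed links in $X$, so $\bS = \CS \otimes_{D^\sigma} \bD$, and its quotient $\bS_\fin = \bS/\bS_\infty$, is countably generated over $\bD$. By construction $\bigcap_{n \ge 0} (1+t)^n \bS_\fin = 0$, so $\bS_\fin$ is reduced in the DVR sense. Invoking the structure theorem for countably generated reduced modules over a DVR (of Pr\"ufer--Kulikov--Kaplansky type), $\bS_\fin$ decomposes as a direct sum of cyclic $\bD$-modules:
\[
\bS_\fin \;\cong\; \bigoplus_{i \in I} \bD/(1+t)^{n_i}, \qquad n_i \in \{1,2,3,\ldots,\infty\},
\]
with the convention $(1+t)^\infty = 0$. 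Reducing modulo $(1+t)$ gives $\ve(\bS_\fin) \cong \bigoplus_{i \in I} \BC(M)$, so the finite dimensionality of $\ve(\bS_\fin)$ forces $|I| = d < \infty$, and $\bS_\fin$ is generated by $d$ elements.

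Finally I would apply Nakayama's lemma. The cokernel $C := \bS_\fin/\IM(\bTheta')$ is finitely generated as a quotient of $\bS_\fin$, and by right-exactness of $\ve$ together with the identification $\bbtheta = \ve(\bTheta')$ (Lemma from the preceding subsection), the surjectivity of $\bbtheta$ gives $\ve(C) = \bbs/\IM(\bbtheta) = 0$; equivalently $(1+t)C = C$. Nakayama's lemma over the local ring $\bD$ then forces $C = 0$, i.e.\ $\bTheta'$ is surjective. The main potential obstacle is verifying the structure theorem in the generality needed: the classical Kulikov result addresses only countable reduced torsion abelian $p$-groups, so one must extend it to arbitrary countably generated reduced modules over a DVR, allowing also free cyclic summands $\bD$. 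Countable generation of $\bS$ is essential here, as the decomposition fails without it; for example, $\BZ_p$ is reduced and torsion-free over $\BZ_{(p)}$ but not free.
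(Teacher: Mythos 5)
Your argument is essentially the paper's own proof: both use the Kaplansky/Pr\"ufer-type decomposition of a countably generated $\bD$-module without nonzero elements of infinite height into cyclic summands (the paper cites \cite[Theorem 11]{Kap} for precisely the statement you invoke), deduce finiteness of the index set from $\dim_{\BC(M)}\bbs=\dim_{\BC(M)}\bbt/\bbp<\infty$, and conclude with Nakayama over the local ring $\bD$ --- the paper lifts a basis of $\bbs$ into the image of $\bTheta\,'$ whereas you kill the cokernel, which is the same step in different clothing. The only nitpick is that $\bigcap_k(1+t)^k\,\bS_\fin=0$ is not quite ``by construction'' but is exactly Lemma \ref{fin}, which you should cite.
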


\begin{proof}
From Lemma \ref{assumption}, one has the commutative diagram
\be \begin{CD}  \bT   @>\bTheta  >> \bS \\
  @V \varepsilon VV  @V\ve VV \\
  \bbt     @>\bbtheta >> \bbs
\end{CD}
\label{dia.1a}
\ee

Suppose $\{x_1,\dots, x_d \}$ is a basis of the $\BC(M)$-vector space $\bbs$. Let $\ov{x}_j \in \bS$ be a lift of $x_j$. By Nakayama's Lemma, $\{\ov x_1,\dots, \ov x_d \}$ spans $\bS$ over $\bD$.
Since $\bbtheta$ and $\ve$ in diagram \eqref{dia.1a} are surjective, each $\ov x_j$ is in the image of $\bTheta$. This proves  that $\bTheta$ is surjective.
\end{proof}

\begin{proposition}
\label{surj}
Suppose $\bbtheta$ is surjective and $\bS$ is finitely generated. Then $\ve(\beta_K) \mid \ov B_K$.
\end{proposition}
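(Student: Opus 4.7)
The plan is to combine the structural description of $\bP\,'$ from Proposition \ref{lem.ideal} with the divisibility relation in Lemma \ref{lem.11}. First, since $\bbtheta$ is surjective, Lemma \ref{lem.surj} gives that $\bTheta\,'$ is surjective and that $\bS_\fin$ is finitely generated as a $\bD$-module. Hence $\bT/\bP\,'\cong \bS_\fin$ is finitely generated over $\bD$, so Proposition \ref{lem.ideal} applies and supplies generators $(1+t)^j g_j$ ($j=0,\dots,m$) of $\bP\,'$, with each $g_j\in\bD[L]$ having leading coefficient $1$.

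Next I would compute $\bbp$ by applying the right exact functor $\varepsilon = -\otimes_\bD \BC(M)$ to the exact sequence $0\to \bP\,'\to \bT\to \bS_\fin\to 0$. By Lemma \ref{lem.5} together with the identification \eqref{eq.33}, the resulting sequence is $\varepsilon(\bP\,')\to \bbt\to\bbs\to 0$, whose image in $\bbt$ is $\bbp$ by definition. Because $\varepsilon$ annihilates $(1+t)^j g_j$ for every $j\ge 1$, the left ideal $\varepsilon(\bP\,')\subset \bbt=\BC(M)[L^{\pm 1}]$ is generated by the single element $\varepsilon(g_0)$. Therefore $\bbp=(\varepsilon(g_0))$, and so $\ov B_K$ and $\varepsilon(g_0)$ agree up to a unit in $\bbt$.

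It remains to show $\varepsilon(\alpha_K)\mid \varepsilon(g_0)$ in $\bbt$, and this is the main obstacle: the relation $g_0=h\,\alpha_K$ supplied by Lemma \ref{lem.11} initially lives in $\widetilde\CT$, and $\varepsilon$ is not defined on all of $\widetilde\CT$. To overcome this, observe that $\widetilde\CT=\bT[(1+t)^{-1}]$, so we may write $h=(1+t)^{-a}\tilde h$ for some $\tilde h\in\bT$ and $a\ge 0$, yielding $(1+t)^a g_0=\tilde h\,\alpha_K$ in $\bT$. The co-primality of the coefficients $\alpha_{K,j}(t,M)$ forces $\alpha_K\notin (1+t)\bT$, so $\varepsilon(\alpha_K)\ne 0$ in the domain $\bbt$. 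If $a>0$, reducing mod $(1+t)$ gives $\varepsilon(\tilde h)\,\varepsilon(\alpha_K)=0$, hence $\tilde h\in (1+t)\bT$; since $\bT=\bD[L^{\pm 1}]$ is free over the DVR $\bD$ and thus $(1+t)$-torsion-free, we may cancel one factor of $(1+t)$ from both sides. Iterating this $a$ times yields $g_0=h^*\alpha_K$ in $\bT$, and reducing mod $(1+t)$ gives $\varepsilon(g_0)=\varepsilon(h^*)\,\varepsilon(\alpha_K)$. Thus $\varepsilon(\alpha_K)\mid\varepsilon(g_0)$, which combined with the previous paragraph yields $\varepsilon(\alpha_K)\mid\ov B_K$.
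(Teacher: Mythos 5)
Your proof follows the paper's argument step for step: invoke Lemma \ref{lem.surj} to get surjectivity of $\bTheta\,'$ and finite generation of $\bS_\fin$, apply Proposition \ref{lem.ideal} to obtain the generators $(1+t)^j g_j$ of $\bP\,'$, tensor the exact sequence with $\BC(M)$ to identify $\bbp$ with the principal ideal $(\ve(g_0))$, and then use Lemma \ref{lem.11} to get $\al_K\mid g_0$. The one place you add substance is the last step: the paper passes directly from $\al_K\mid g_0$ in $\widetilde\CT$ to $\ve(\al_K)\mid\ve(g_0)$ without comment, whereas you correctly observe that $\ve$ is not defined on $\widetilde\CT$ and carefully descend the divisibility relation to $\bT=\bD[L^{\pm1}]$ by peeling off powers of $(1+t)$, using that $\ve(\al_K)\neq 0$ (coprimality of the $\alpha_{K,j}$) and that $\bT$ is $(1+t)$-torsion-free; this is a genuine gap in the paper's exposition that your proof fills.
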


\begin{proof} Recall that $\bbp= \ker \bbtheta$.  By Lemma \ref{lem.surj}, $\bTheta$ is surjective.
Diagram~\eqref{dia.1a} can be extended to   the following commutative diagram
with exact rows:
$$ \begin{CD}  0 @ >>> \bP @ >\iota>> \bT   @>\bTheta >> \bS  @>>>  0\\
@.  @ V h VV @V \varepsilon VV  @V \varepsilon VV \\
0 @ >>>     \bbp  @ >>>  \bbt     @>\bbtheta >> \bbs @>>> 0
\end{CD}
$$

Taking the tensor product of the first row, which is an exact sequence of $\bD$-modules, with the $\bD$-algebra $\BC(M)$, we get the exact sequence
$$ \bP  \otimes_{\bD} \BC(M) \overset{\ve(\iota)} \longrightarrow  \bbt \overset{\bbtheta} \longrightarrow \bbs \to 0.$$
It follows that  $\bbp= \ker(\bbtheta)= \IM(\ve(\iota))= \IM(\ve \circ \iota)=   h(\bP)$.

Suppose $\{g_j:=(1+t)^j h_j \gamma, j=0,1,\dots,m\}$ (with $h_m=1$) be a set of generators of $I=\bP$ as described in  Proposition \ref{lem.ideal}.
Then $h(g_j)=\ve(g_j)=0$ except possibly for $j=0$. It follows that $\bbp= h(\bP)$ is the principal ideal generated by $\ve(g_0)$. Hence $\ve(g_0)=\ov B_K \neq 0$.
On the other hand, it is clear that $\beta_K= \gamma$. Hence  $\ve(\beta_K) \mid \ve(g_0)= \ov B_K$.
\end{proof}

\no{Suppose $\{(1+t)^j g_j, j=0,1,\dots,m\}$ be a set of generators of $I=\bP$ as described in  Proposition \ref{lem.ideal}.
Then $\ve((1+t)^j g_j)=0$ except for $j=0$. It follows that $\bbp= h(\bP)$ is the principal ideal generated by $\ve(g_0)$. Hence $\ve(g_0)=\ov B_K $.
On the other hand, by Proposition~\ref{lem.ideal}(b), $\al_K \mid g_0$, hence $\ve(\al_K) \mid \ve(g_0)= \ov B_K$.}

Note that if the localized universal character ring $\bbs$ is reduced, then $\ov B_K=B_K$. Since $\alpha_K \mid \beta_K$, Proposition \ref{surj} implies the following.

\begin{corollary}
Suppose $\bbtheta$ is surjective, $\bS$ is finitely generated, and $\bbs$ is reduced. Then $\ve(\al_K) \mid B_K$ in $\tilde{\ft}=\BC(M)[L^{\pm 1}]$.
\label{aB}
\end{corollary}

\def\wft{\tilde{\ft}}
\def\wtheta{\tilde{\theta}}

\subsection{Proofs of Theorems \ref{t1} and \ref{t2}}

\subsubsection{Proof of Theorem \ref{t1}}

Since $K$ is a hyperbolic knot, it has discrete faithful $SL_2(\BC)$-representations. Let $\chi_0$ be an irreducible component of $\chi(X)$ containing the character of a discrete faithful $SL_2(\BC)$-representation. Since $X$ has one boundary component, by a result of Thurston \cite{Th} $\chi_0$ has dimension 1.

\begin{lemma}\label{lem.0}
Suppose $R$ is a \ $\BC$-algebra which is an integral domain, and the transcendence degree of the fractional field $F(R)$ of $R$ over $\BC$ is 1. Suppose $x \in R$ is transcendental over $\BC$.
Then the natural  map $R \otimes_{\BC[x]} \BC(x) \to F(R)$ is an isomorphism.
\end{lemma}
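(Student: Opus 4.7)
The plan is to identify $R \otimes_{\BC[x]} \BC(x)$ with the localization $S^{-1}R$, where $S := \BC[x] \setminus \{0\}$, and then to argue that this localization already coincides with $F(R)$. Because $x$ is transcendental over $\BC$, the map $\BC[x] \hookrightarrow R$ is an injection of integral domains, so $S$ is a multiplicative subset of $R$ consisting of non-zero-divisors, and tensoring with $\BC(x) = S^{-1}\BC[x]$ over $\BC[x]$ is the same as inverting the image of $S$ in $R$. Under this identification the natural map of the lemma becomes the canonical map $S^{-1}R \to F(R)$, $r/g \mapsto r/g$.

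Injectivity is then immediate: any localization of an integral domain at a multiplicative subset of non-zero-divisors embeds into the full field of fractions, so $S^{-1}R \hookrightarrow F(R)$.

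The content of the lemma is surjectivity, and this is where the transcendence-degree hypothesis enters. To show $S^{-1}R = F(R)$ it suffices to show that $1/b \in S^{-1}R$ for every nonzero $b \in R$. Since $\mathrm{tr.deg}_{\BC} F(R) = 1$ and $x$ is transcendental, $\{x\}$ is a transcendence basis, so every element of $F(R)$ (in particular $b$) is algebraic over $\BC(x)$. After clearing denominators I obtain an algebraic relation
\[
\sum_{i=0}^n p_i(x)\, b^i \;=\; 0, \qquad p_i \in \BC[x],\ \text{not all zero},
\]
and, using that $R$ is an integral domain, I can cancel factors of $b$ until the constant term satisfies $p_0 \neq 0$. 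Then
\[
-p_0(x) \;=\; b \cdot \Bigl( \sum_{i=1}^n p_i(x)\, b^{i-1} \Bigr),
\]
so with $r := -\sum_{i=1}^n p_i(x)\, b^{i-1} \in R$ and $g := p_0(x) \in S$, the element $r/g \in S^{-1}R$ maps to $1/b \in F(R)$. This step---extracting from an arbitrary nonzero $b \in R$ an element of $\BC[x] \setminus \{0\}$ that is a multiple of $b$ in $R$---is the only one that uses more than formal nonsense, and it is what I would expect to be the main (and in fact only) obstacle; everything else reduces to standard facts about localizations of integral domains.
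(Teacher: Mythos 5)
Your proof is correct and follows essentially the same route as the paper's. Both reduce surjectivity to showing that any nonzero $b\in R$ becomes invertible after tensoring with $\BC(x)$, using that $b$ is algebraic over $\BC(x)$ because $\mathrm{tr.deg}_{\BC}F(R)=1$ and $x$ is transcendental; you make this explicit by writing out the minimal relation and solving for $1/b$, whereas the paper phrases it more compactly as ``$\BC(x)[b]$ is a subfield of $F(R)$, so $b^{-1}\in\BC(x)[b]\subset R\otimes_{\BC[x]}\BC(x)$.'' Your reformulation via the localization $S^{-1}R$ also replaces the paper's appeal to flatness for injectivity with the equally standard fact that a domain embeds in its localization at non-zero-divisors; this is a cosmetic difference, not a new idea.
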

\begin{proof} Note that $\BC(x)$ is flat over $\BC[x]$ and $R \otimes_{\BC[x]} \BC(x) \subset F(R)$. Hence we only need to show that $R \otimes_{\BC[x]} \BC(x)$ is a field, or that every $0 \neq y \in R$ is invertible in $R \otimes_{\BC[x]} \BC(x)$. Fix $y \in R$, $y \neq 0$. Since $x \in R$ is transcendental over $\BC$, the transcendence degree of the field $\BC(x)$ over $\BC$ is 1. The field $\BC(x)$ is contained in the fractional field $F(R)$ of $R$ whose transcendence degree over $\BC$ is also $1$, hence $F(R)$ is algebraic over $\BC(x)$. In particular, $y$ is algebraic over $\BC(x)$. Since $\BC(x)[y]$ is a subfield of $F(R)$, $y^{-1} \in \BC(x)[y]$. Clearly $\BC(x)[y] \subset R \otimes_{\BC[x]} \BC(x)$. Hence $y^{-1} \in R \otimes_{\BC[x]} \BC(x)$.
\end{proof}

Recall that the inclusion $\partial X \hookrightarrow X$ induces the restriction map $\rho : \chi(X) \to \chi(\partial X).$ Let $Y_0$ be the Zariski closure of
$\rho(\chi_0) \subset \chi(\partial X)$. Then $Y_0$ is irreducible and has dimension 1. Since $\rho |_{\chi_0}: \chi_0 \to Y_0$ has dense image, the pullback map $(\rho |_{\chi_0})^*: \BC[Y_0]\to \BC[\chi_0]$ is an embedding. Both $\BC[Y_0]$ and $\BC[\chi_0]$
are integral domains. \cite[Theorem 3.1]{Du} says that $(\rho |_{\chi_0})^*$ induces an isomorphism, also denoted by $(\rho |_{\chi_0})^*$, $\BC(Y_0) \to \BC(\chi_0)$, where $\BC(Y_0)$ (resp. $\BC(\chi_0)$) is the fractional field of $\BC[Y_0]$ (resp. $\BC[\chi_0]$).

Let $x\in \BC[Y_0]\subset \BC[\chi_0]$ be defined by $x(\rho)= \tr (\rho(\mu))=M+M^{-1}$. By \cite{CSx}, $x$ is not a constant function on $\chi_0$. It follows that $x$ is transcendental over $\BC$.
Hence Lemma \ref{lem.0} implies that $(\rho |_{\chi_0})^*:\BC[Y_0] \otimes_{\BC[x]} \BC(x) \to \BC[\chi_0] \otimes_{\BC[x]} \BC(x)$ is an isomorphism. 

By assumption, the character variety $\chi(X)$ consists of two irreducible components: the abelian component, denoted by $\chi_{ab}$, and $\chi_0$. It is known that $\chi_{ab}$ has dimension 1 and the restriction map $\rho|_{\chi_{ab}}:\chi_{ab} \to \chi(\partial X)$ is a birational isomorphism onto its image. It follows that $(\rho |_{\chi_{ab}})^*:\BC[Y_{ab}] \otimes_{\BC[x]} \BC(x) \to \BC[\chi_{ab}] \otimes_{\BC[x]} \BC(x)$ is an isomorphism, where $Y_{ab}$ is the Zariski closure of $\rho(\chi_{ab}) \subset \chi(\partial X)$. 

\begin{lemma}
One has
\begin{equation}
\BC[\chi(X)] \otimes_{\BC[x]} \BC(x) \cong \left(\BC[\chi_0] \otimes_{\BC[x]} \BC(x)\right) \times \left(\BC[\chi_{ab}] \otimes_{\BC[x]} \BC(x)\right).
\label{exactX}
\end{equation}
\label{points}
\end{lemma}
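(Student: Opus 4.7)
The plan is to exploit the hypothesis that $\chi(X)$ has exactly two irreducible components by writing down the standard Mayer--Vietoris style exact sequence for the coordinate ring of a reduced reducible algebraic set, and then to tensor over $\BC[x]$ with $\BC(x)$, using that the two components meet in a zero-dimensional set so that the error term dies.

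First I would let $I_{ab}$ and $I_0$ denote the defining ideals of $\chi_{ab}$ and $\chi_0$ inside $\BC[\chi(X)]$. Because $\BC[\chi(X)]$ is the reduced ring of regular functions on $\chi(X)$ and $\chi(X)=\chi_{ab}\cup\chi_0$ is the irreducible decomposition, one has $I_{ab}\cap I_0=0$; a routine check then gives the short exact sequence
\begin{equation*}
0\to \BC[\chi(X)]\to \BC[\chi_{ab}]\times \BC[\chi_0]\to Q\to 0,
\end{equation*}
where the first arrow is $f\mapsto (f+I_{ab},f+I_0)$, the second is $(\bar a,\bar b)\mapsto (a-b)\bmod (I_{ab}+I_0)$, and $Q:=\BC[\chi(X)]/(I_{ab}+I_0)$ is the coordinate ring of the scheme-theoretic intersection $\chi_{ab}\cap\chi_0$.

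Next I would argue that $Q$ is a torsion $\BC[x]$-module. Both $\chi_{ab}$ and $\chi_0$ are irreducible of dimension one (for $\chi_0$ this is the Thurston result invoked just before Lemma \ref{lem.0}; for $\chi_{ab}$ it is standard), and they are distinct, so the intersection $\chi_{ab}\cap\chi_0$ is a proper closed subset of each and is therefore zero-dimensional. Hence $Q$ is a finite-dimensional $\BC$-vector space, the multiplication-by-$x$ endomorphism of $Q$ has a nonzero minimal polynomial $p(x)\in\BC[x]$, and $p(x)\cdot Q=0$; this forces $Q\otimes_{\BC[x]}\BC(x)=0$.

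To finish I would tensor the exact sequence over $\BC[x]$ with the flat $\BC[x]$-algebra $\BC(x)$. Using that the tensor product commutes with the finite direct product in the middle term and that the cokernel $Q$ dies, I obtain exactly the claimed isomorphism. The only step that requires any care is verifying the zero-dimensionality of $\chi_{ab}\cap\chi_0$, but this is automatic from the dimension count together with the two-components hypothesis, so I do not anticipate a serious obstacle in this argument.
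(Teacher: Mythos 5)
Your proposal is correct and follows essentially the same route as the paper: set up the standard short exact sequence $0\to \BC[\chi(X)]\to \BC[\chi_0]\times\BC[\chi_{ab}]\to \BC[\chi_0\cap\chi_{ab}]\to 0$, observe that the two one-dimensional components meet in a finite set so the cokernel is a finite-dimensional $\BC$-vector space and hence dies after $\otimes_{\BC[x]}\BC(x)$, and conclude by flatness of $\BC(x)$ over $\BC[x]$. The only cosmetic difference is that the paper works with ideals in an ambient polynomial ring $R=\BC[x,y_1,\dots,y_m]$ rather than inside $\BC[\chi(X)]$ itself, and you spell out the dimension count justifying that the intersection is zero-dimensional, which the paper leaves implicit.
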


\begin{proof}
From Subsection \ref{universaldef}, we see that $\BC[\chi(X)]$ is the quotient of the polynomial ring $R:=\BC[x,y_1, \cdots, y_m]$ by an ideal $I \subset R$, where $x=M+M^{-1}$ is the trace of the meridian and $y_1, \dots, y_m$ are traces of some fix elements in the knot group.

Suppose $\BC[\chi_0]=R/I_0$ and $\BC[\chi_{ab}]=R/I_{ab}$ where $I_0,\,I_{ab}$ are ideals in $R$. Since $\chi(X)$ consists of two irreducible components $\chi_{ab}$ and $\chi_0$, $I=I_0 \cap I_{ab}$, and hence $\BC[\chi(X)]=R/(I_0 \cap I_{ab})$. Consider the following sequence of $\BC[x]$-modules
\be
0 \to R/(I_0 \cap I_{ab})  \overset{\eta} \longrightarrow  R/I_0 \times R/I_{ab}  \overset{\xi} \longrightarrow  R/(I_0+I_{ab}) \to 0.
\label{exact}
\ee
where $\eta(y)=(y,y)$ and $\xi(y,z)=y-z$. It is easy to check that the sequence \eqref{exact} is exact.

Consider the $\BC[x]$-module $R/(I_0+I_{ab})$. The zero set of the ideal $(I_0+I_{ab}) \subset R$ is precisely the intersection of the two algebraic varieties $\chi_0$ and $\chi_{ab}$, hence it is a set consisting of a finite number of points. It follows that $R/(I_0+I_{ab}) \cong \BC^k$ for some $k$. The exact sequence \eqref{exact} can be rewritten as
\be
0 \to R/(I_0 \cap I_{ab})  \overset{\eta} \longrightarrow  R/I_0 \times R/I_{ab}  \overset{\xi} \longrightarrow  \BC^k \to 0.
\label{exact1}
\ee
Since $\BC(x)$ is flat over $\BC[x]$ and $\BC^k \otimes_{\BC[x]} \BC(x)=0$, the following sequence which is obtained from \eqref{exact1} by tensoring it with $\BC(x)$ over $\BC[x]$ is exact
$$
0 \to R/(I_0 \cap I_{ab}) \otimes_{\BC[x]} \BC(x)  \overset{\eta \otimes_{\BC[x]} \BC(x)} \longrightarrow  (R/I_0 \times R/I_{ab} ) \otimes_{\BC[x]} \BC(x) \overset{\xi \otimes_{\BC[x]} \BC(x)} \longrightarrow  0 .
$$
The lemma follows.
\end{proof}

Let $Y \subset \chi(\partial X)$ be the algebraic set consisting of $Y_0$ and $Y_{ab}$. By similar arguments as in the proof of Lemma \ref{points}, we have
\begin{equation}
\BC[Y] \otimes_{\BC[x]} \BC(x) \cong \left(\BC[Y_0] \otimes_{\BC[x]} \BC(x)\right) \times \left(\BC[Y_{ab}] \otimes_{\BC[x]} \BC(x)\right).
\label{exactY}
\end{equation}
We have $$(\rho |_{\chi_0})^* \times (\rho |_{\chi_{ab}})^*:\left(\BC[Y_0] \otimes_{\BC[x]} \BC(x)\right) \times (\left(\BC[Y_{ab}] \otimes_{\BC[x]} \BC(x)\right)) $$
$$\longrightarrow \left(\BC[\chi_0] \otimes_{\BC[x]} \BC(x)\right) \times \left(\BC[\chi_{ab}] \otimes_{\BC[x]} \BC(x)\right)$$ is an isomorphism. Equations \eqref{exactX} and \eqref{exactY} then imply that  $(\rho |_{\chi_0})^* \times (\rho |_{\chi_{ab}})^*:\BC[Y] \otimes_{\BC[x]} \BC(x) \to \BC[\chi(X)] \otimes_{\BC[x]} \BC(x)$ is an isomorphism. Since $Y$ is an algebraic set in $\chi(\partial X)$, $\BC[Y]$ is a quotient of $\BC[\chi(\partial X)] \equiv \ft^{\sigma}$. Hence the map $\ft^{\sigma} \otimes_{\BC[x]} \BC(x) \to \BC[\chi(X)] \otimes_{\BC[x]} \BC(x)$, induced by $\rho : \chi(X) \to \chi(\partial X)$, is surjective. Taking the tensor product of this map with $\BC(M)$ over $\BC(x)$, we get the map
$$\tilde{\theta}: \tilde{\ft}=\ft^{\sigma} \otimes_{\BC[x]}\BC(M) \to \widetilde{\BC[\chi(X)]}=\BC[\chi(X)] \otimes_{\BC[x]} \BC(M), $$
which is also surjective. (Note that $\BC(M)$ is flat over $\BC(x)$.)

 Note that
$\tilde{\ft} = \BC(M)[L^{\pm 1}]$. Hence the dimension of the $\BC(M)$-vector space $\widetilde{\BC[\chi(X)]}$ is equal to the $L$-degree of $B_K$, the generator of the kernel $\tilde{\fp}$ of $\tilde{\theta}$, and is finite. Since the universal character ring $\mathfrak{s}(X)$ is reduced and $\bS$ is a finitely generated $\bD$-module, Corollary \ref{aB} implies that $B_K$ is $M$-essentially divisible by $\ve(\alpha_K)$. Since $A_K$ is $M$-essentially equal to $B_K$ by Corollary \ref{equal}, it must  be $M$-essentially divisible by $\ve(\alpha_K)$.

It is known that $A_K$ always contains the factor $L-1$ coming from characters of abelian representations \cite{CCGLS}), and $\varepsilon(\alpha_K)$ is also divisible by $L-1$ \cite[Proposition 2.3]{Le06}. Hence $\frac{ A_K}{L-1}$ is $M$-essentially divisible by $\frac{\varepsilon(\alpha_K)}{L-1}$.

Since the character variety $\chi(X)$ consists of two irreducible components, $A_K$ has exactly two irreducible factors. One factor is $L-1$, hence the other one, $\frac{A_K}{L-1}$, is irreducible. Since $\frac{ A_K}{L-1}$ is $M$-essentially divisible by $\frac{\varepsilon(\alpha_K)}{L-1}$, it follows that $$\frac{\varepsilon(\alpha_K)}{L-1} \eqM 1, \quad \text{or} \quad \frac{\varepsilon(\alpha_K)}{L-1} \eqM \frac{ A_K}{L-1}.$$ If $\frac{\varepsilon(\alpha_K)}{L-1} \eqM 1$, then, by Lemma \ref{prop17} below, the recurrence polynomial $\alpha_K$ has $L$-degree 1. This contradicts condition (iii) of Theorem 1, hence we must have $\frac{\varepsilon(\alpha_K)}{L-1} \eqM \frac{ A_K}{L-1}$. In other words, the AJ conjecture holds true for $K$.

\begin{lemma}
	The polynomial $\varepsilon(\alpha_K)$ is $M$-essentially equal to $L-1$ if and only if
	the $L$-degree of the recurrence polynomial $\alpha_K$ is $1$.
	\label{prop17}
\end{lemma}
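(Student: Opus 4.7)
The plan is to prove the two implications separately. For the forward direction, if $L$-deg$(\alpha_K)=1$, then $\varepsilon(\alpha_K)=\alpha_{K,0}(-1,M)+\alpha_{K,1}(-1,M)L$ is a polynomial in $L$ of degree at most one; it is nonzero because the co-primality of $\alpha_{K,0},\alpha_{K,1}$ in $\BZ[t^{\pm 1},M]$ prevents $1+t$ from dividing both, and it is divisible by $L-1$ by \cite[Proposition~2.3]{Le06}. Hence $\varepsilon(\alpha_K)=c(M)(L-1)$ for some $0\ne c(M)\in\BC[M]$, so $\varepsilon(\alpha_K)\eqM L-1$.

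For the converse I would assume $\varepsilon(\alpha_K)=c(M)(L-1)$ with $c(M)\ne 0$ and suppose for contradiction that $d:=L$-deg$(\alpha_K)\ge 2$; then $\alpha_{K,0}(-1,M)=-c(M)\ne 0$ while $\alpha_{K,d}(-1,M)=0$. The key ingredient will be a symmetry of $\CA_K$ arising from $J_K(-n)=-J_K(n)$: the map $\iota:\CT\to\CT$ defined on generators by $\iota(M)=M^{-1}$, $\iota(L)=L^{-1}$, $\iota(t)=t$ is a well-defined $\BC$-algebra involution, consistent with $LM=t^2 ML$ because $L^{-1}M^{-1}=t^2 M^{-1}L^{-1}$. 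A quick check on monomials gives $(PJ_K)(-n)=-(\iota(P)J_K)(n)$ for every $P\in\CT$, so $\iota$ preserves $\CA_K$. Thus $L^d\iota(\alpha_K)\in\CA_K$, and a short computation rewrites it as $\sum_{k=0}^d \alpha_{K,d-k}(t,t^{-2d}M^{-1})L^k$, a polynomial of $L$-degree $d$ with nonzero constant term.

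By the minimality of the $L$-degree of $\alpha_K$ in the principal left ideal domain $\widetilde{\CT}$, this element must equal $\lambda(t,M)\alpha_K$ for some nonzero scalar $\lambda(t,M)$, and a co-primality argument on the coefficients $\alpha_{K,j}$ forces $\lambda\in\BC[t^{\pm 1},M^{\pm 1}]$, so that $\lambda(-1,M)$ is a well-defined Laurent polynomial in $M$. Comparing the coefficient of $L^d$ gives the identity $\lambda(t,M)\,\alpha_{K,d}(t,M)=\alpha_{K,0}(t,t^{-2d}M^{-1})$; evaluating at $t=-1$ makes the left side vanish (because $\alpha_{K,d}(-1,M)=0$) while the right side equals $\alpha_{K,0}(-1,M^{-1})=-c(M^{-1})\ne 0$, a contradiction, so $d=1$.

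The main obstacle I expect is precisely the extraction of this symmetry identity relating the top and bottom $L$-coefficients of $\alpha_K$; a more naive approach that tries to produce a recurrence of $L$-degree strictly less than $d$ from the divisibility $(1+t)\mid\alpha_{K,j}$ for $j\ge 2$ breaks down because the truncation $\alpha_{K,0}+\alpha_{K,1}L$ does not annihilate $J_K$. It is the involution $\iota$, through its use of the oddness of the colored Jones polynomial, that supplies the rigidity needed to force the leading coefficient $\alpha_{K,d}$ to remain nonzero at $t=-1$.
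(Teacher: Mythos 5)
Your proof is correct and follows essentially the same strategy as the paper's: exploit the $\sigma$-invariance of $\CA_K$ (which the paper cites from \cite{Ga08}, but which you re-derive directly from $J_K(-n)=-J_K(n)$, noting that the paper's $\sigma$ is your $\iota$) together with the fact that $\alpha_K$ generates the principal left ideal $\widetilde{\CA}_K$, to produce a second degree-$d$ element of the ideal that must be a scalar multiple of $\alpha_K$, and then get a contradiction at $t=-1$ when $d\ge 2$. The only cosmetic difference is that the paper writes $\alpha_K=h(M)\sigma(\alpha_K)L^d$ and compares $L^0$-coefficients, whereas you write $L^d\iota(\alpha_K)=\lambda(t,M)\alpha_K$ and compare $L^d$-coefficients; your added step showing $\lambda\in\BC[t^{\pm1},M^{\pm1}]$ via co-primality of the $\alpha_{K,j}$, so that evaluation at $t=-1$ is licit, is a worthwhile point of care that the paper handles more tersely.
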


\begin{proof}
	The backward direction is obvious since $\varepsilon(\alpha_K)$ is always divisible by $L-1$.
	
	Now suppose the polynomial $\varepsilon(\alpha_K)$ is $M$-essentially equal to $L-1$,
	i.e. $\varepsilon(\alpha_K)=g(M)(L-1)$ for some non-zero $g(M) \in \BC[M^{\pm 1}].$	Then
	\begin{equation}
		\alpha_K= g(M)(L-1)+(1+t) \sum_{j=0}^{d} a_j(M)L^j
		\label{eq23}
	\end{equation}
	where $a_j(M)$'s are Laurent polynomials in $\CR[M^{\pm 1}]$ and $d$ is the $L$-degree of $\alpha_K.$
	
	By a result in \cite{Ga08}, the recurrence ideal $\CA_K$ is invariant under the involution $\sigma$. Hence $\sigma(\alpha_K)$ is an element in $\CA_K$. Since $\alpha_K$ is the generator of $\wt{\CA}_K$, it follows that $\alpha_K=h(M)\sigma (\alpha_K)L^{d}$ for some $h(M) \in \CR(M)$. Equation \eqref{eq23} implies that
	$$
		g(M)(L-1)+(1+t) \sum_{j=0}^{d}a_j(M)L^j $$ $$= h(M)g(M^{-1})(L^{-1}-1)L^{d}+(1+t) \sum_{j=0}^{d} h(M)a_j(M^{-1})L^{d-j}.$$

	If $d>1$ then by comparing the coefficients of $L^0$ in both sides of the above equation, we get $-g(M)+(1+t)a_0(M)=(1+t)h(M)a_{d}(M^{-1}),$ i.e.
	\begin{equation}
	g(M)=(1+t)\left( a_0(M)-h(M)a_{d}(M^{-1}) \right)
	\label{eq24}
	\end{equation}
	 Since $g(M)$ is a Laurent polynomial in $M$ with coefficients in $\BC$, equation \eqref{eq24} implies that $g(M)$ must be equal to 0. This is a contradiction. Hence we must have $d=1$.
\end{proof}

\begin{remark}
In the proof of Theorem \ref{t1}, instead of condition {\em (iii)} we actually use the following weakened version 

{\em (iii') the localized universal $SL_2$-character ring $\bbs$ of $\pi_1(S^3 \setminus K)$ is reduced.}\\
(Recall that  $\bbs= \fs\otimes _{\BC[M+ M^{-1}]}\BC(M)$ is the localization of the universal $SL_2$-character ring.) Thus Theorem \ref{t1} holds if the condition {\em (iii)} is replaced by {\em (iii')} above.
\end{remark}

\subsubsection{Proof of Theorem \ref{t2}}

It is known that two-bridge knots and $(-2,3,2n+1)$-pretzel knots, excluding torus knots, are hyperbolic. (Note that the AJ conjecture holds true for torus knots by \cite{Hi, Tr}). Their universal character rings are reduced by \cite[Corollary 5.8]{Le06} and Theorem \ref{reduced} respectively. Their localized skein modules are finitely generated over the local ring $\bar{D}$ by \cite[Theorem 2]{Le06} and Proposition \ref{localized} respectively.

The $L$-degree of the recurrence polynomial of a two-bridge knot is $>1$ according to \cite[Proposition 2.2]{Le06}. Also by \cite[Proposition 2.2]{Le06}, for any knot if the $L$-degree of its recurrence polynomial is 1 then the breadth of its colored Jones polynomial is a linear function (in the color). For the $(-2,3,2n+1)$-pretzel knot, by \cite[Section 4.7]{Ga10} the breadth of its colored Jones polynomial is not a linear function, hence the $L$-degree of its recurrence polynomial is $>1$.

Double twist knots
of the form $J(k,l)$ with $k \not= l$, two-bridge knots of the form $\fb(p,m)$ with $m=3$ or ``$p$ prime and $\gcd(\frac{p-1}{2},\frac{m-1}{2})=1$'', and $(-2,3,6n \pm 1)$-pretzel knots satisfy condition (ii) of Theorem \ref{t1} by \cite{MPL}, Theorem \ref{prime} and \cite{Bur}, and \cite{Mat} respectively. Hence the theorem follows.

\section{The universal character ring of the $(-2,3,2n+1)$-pretzel knot}

\label{pretzel}

In this section we explicitly calculate the universal character ring of the $(-2,3,2n+1)$-pretzel knot and prove its reducedness for all integers $n$.

\subsection{The character variety} For the $(-2,3,2n+1)$-pretzel knot $K_{2n+1}$, we have
$$\pi_1(X)=\la a,b,c \mid cacb=acba,~ba(cb)^n=a(cb)^nc\ra,$$
where $X=S^3 \setminus K_{2n+1}$ and $a,b,c$ are meridians depicted in Figure $1$.

\begin{figure}[htpb]
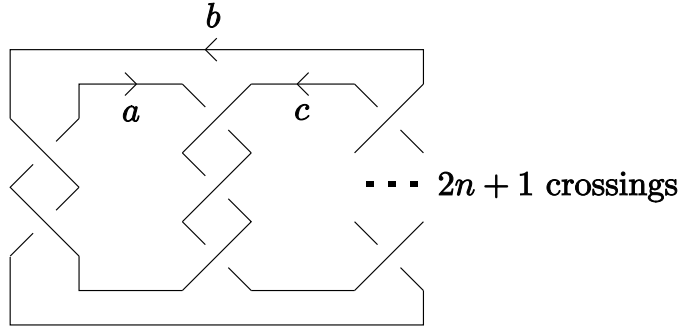

$$ \psdraw{pretzelknot}{3.5in} $$
\caption{The $(-2,3,2n+1)$-pretzel knot}
\end{figure}

Let $w=cb$ then the first relation of $\pi_1(X)$ becomes $caw=awa.$ It implies that $c=awaw^{-1}a^{-1}$ and  $b=c^{-1}w=awa^{-1}w^{-1}a^{-1}w$. The second relation then has the form
$$awa^{-1}w^{-1}a^{-1}waw^n=aw^nawaw^{-1}a^{-1}$$
i.e.
$$w^nawa^{-1}w^{-1}a^{-1}=a^{-1}w^{-1}awaw^{-1}w^n.$$
Hence we obtain a presentation of $\pi_1(X)$ with two generators and one relation
$$\pi_1(X) = \la a,w \mid w^nE=Fw^n\ra$$
where $E:=awa^{-1}w^{-1}a^{-1}$ and $F:=a^{-1}w^{-1}awaw^{-1}.$

The character variety of the free group $F_2=\la a,w \ra$ in 2 letters $a$ and $w$ is isomorphic to $\BC^3$ by the Fricke-Klein-Vogt theorem, see \cite{LM}. For every element $\omega \in F_2$ there is a unique polynomial $\mathbf{P}_\omega$ in 3 variables such that for any representation $r: F_2 \to SL_2(\BC)$ we have $\tr (r(\omega))=\mathbf{P}_\omega (x,y,z)$ where $x:=\tr(r(a)),~y:=\tr(r(w))$ and $z:=\tr(r(aw))$. The polynomial $\mathbf{P}_\omega$ can be calculated inductively using the following identities for traces of matrices $A,B \in SL_2(\BC)$:
\begin{equation}
\tr(A)=\tr(A^{-1}), \quad \tr(AB)+\tr(AB^{-1})=\tr(A)\tr(B).
\label{41}
\end{equation}
Thus for every representation $r: \pi_1(X) \to SL_2(\BC)$, we consider $x,y$, and $z$ as functions of $r$. The character variety of $\pi_1(X)$ is the zero locus of an ideal in $\BC[x,y,z]$, which we describe explicitly in the next theorem.

\begin{theorem}
The character variety of the pretzel knot $K_{2n+1}$ is the zero locus of 2 polynomials $P:=\mathbf{P}_E-\mathbf{P}_F$ and $Q_n:=\mathbf{P}_{w^nEa}-\mathbf{P}_{Fw^na}$. Explicitly,
\begin{eqnarray}
P &=& x - x y + (-3 + x^2 + y^2) z - x y z^2 + z^3,\label{42}\\
Q_n&=& S_{n-2}(y)+S_{n-3}(y)-S_{n-4}(y)-S_{n-5}(y)-S_{n-2}(y) \, x^2 \label{43}\\
   && + \, \big( S_{n-1}(y)+S_{n-3}(y)+S_{n-4}(y) \big) \, xz-\big( S_{n-2}(y)+S_{n-3}(y) \big)\,z^2 \nonumber
\end{eqnarray}
where $S_k(y)$'s are the Chebychev polynomials defined by $S_0(y)=1,~S_1(y)=y$ and $S_{k+1}(y)=yS_{k}(y)-S_{k-1}(y)$ for all integers $k$.
\label{universal}
\end{theorem}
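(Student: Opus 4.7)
The plan is to establish the formulas (42) and (43) by direct trace computation, then prove the set-theoretic equality between $\chi(\pi_1(X))$ and $V(P, Q_n) \subset \BC^3$.

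For (42), the cyclic property of trace yields $\mathbf{P}_E = \tr(awa^{-1}w^{-1}a^{-1}) = \tr(wa^{-1}w^{-1}) = \tr(a^{-1}) = x$, while for $\mathbf{P}_F$ I would iteratively apply the trace identity $\tr(XY) = \tr(X)\tr(Y) - \tr(XY^{-1})$ (with help from the Fricke relation $\tr([a,w]) = x^2+y^2+z^2-xyz-2$) to obtain the stated polynomial; subtraction then gives (42). For (43), the central tool is the Cayley--Hamilton identity $w^n = S_{n-1}(y)w - S_{n-2}(y)I$ in $SL_2(\BC)$, which reduces the expression to
\[
Q_n = S_{n-1}(y)\bigl(\tr(wEa) - \tr(Fwa)\bigr) - S_{n-2}(y)\bigl(\tr(Ea) - \tr(Fa)\bigr).
\]
Direct computation via the trace identity gives $\tr(wEa) = y$, $\tr(Ea) = x^2+y^2+z^2-xyz-2$, $\tr(Fwa) = xyz-y^2-z^2+2$, and $\tr(Fa) = xy^2z - yz^2 - xz - y^3 + 3y$. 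Substituting these and re-expanding the mixed monomials $y^k S_j(y)$ using the recurrence $yS_k = S_{k+1} + S_{k-1}$ yields the form (43).

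For the set-theoretic equality, the inclusion $\chi(\pi_1(X)) \subseteq V(P, Q_n)$ is immediate: the relation $w^n E = Fw^n$ in $\pi_1(X)$ makes $\rho(E)$ conjugate to $\rho(F)$ (giving $P = 0$), and $w^n Ea = Fw^n a$ holds as a group element (giving $Q_n = 0$). For the reverse, given $(x_0, y_0, z_0) \in V(P, Q_n)$, I construct a representation of $\pi_1(X)$ with these traces. Reducible characters lie on the abelian curve $\{y = x^2-2,\, z = x^3-3x\}$, on which the diagonal representation $\rho(w) = \rho(a)^2$ automatically realizes the defining relation. For the irreducible case, pick an irreducible $\rho: F_2 \to SL_2(\BC)$ with the given traces; since $\{I, A, W, AW\}$ is then a basis of $M_2(\BC)$, the matrix $M := \rho(w^nE) - \rho(Fw^n)$ vanishes iff the four trace pairings $\tr(M\,g)$ vanish for $g \in \{I, A, W, AW\}$. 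Using Cayley--Hamilton for $w^n$, together with the identities $\tr(Ew) = \mathbf{P}_F$ (which follows from the word identity $Ew = aF^{-1}a^{-1}$) and $\tr(Fw) = x$, direct computation yields
\[
\tr(M) = -(S_{n-1}+S_{n-2})(y)\,P, \quad \tr(MW) = -(S_n+S_{n-1})(y)\,P, \quad \tr(MA) = Q_n,
\]
and a similar but longer calculation shows $\tr(MAW)$ lies in the ideal $(P, Q_n) \subset \BC[x,y,z]$. Hence $P = Q_n = 0$ forces $M = 0$, so $\rho$ factors through $\pi_1(X)$.

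The main obstacle is the verification that $\tr(MAW) \in (P, Q_n)$, together with the parallel check that $V(P, Q_n) \cap \{x^2+y^2+z^2-xyz = 4\}$ coincides with the abelian curve (which rules out spurious reducible points in $V(P, Q_n)$). Both reduce to explicit polynomial manipulations: the former through computing $\tr(Eaw), \tr(wEaw), \tr(Faw), \tr(Fwaw)$ and careful Chebyshev bookkeeping; the latter via direct elimination. The dimensional heuristic that $\chi(\pi_1(X))$ is $1$-dimensional and hence generically cut out by two polynomials in $\BC^3$ is consistent but not a substitute for these verifications.
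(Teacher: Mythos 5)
Your strategy is genuinely different from the paper's.  The paper proves the converse inclusion $V(P,Q_n)\subseteq\chi(\pi_1X)$ by a case analysis on $y$ (\,$y^2\neq 4$, $y=2$, $y=-2$\,), in each case explicitly writing down matrices $r(a),r(w)$ with the prescribed traces and verifying $r(w^nE-Fw^n)=0$ by direct matrix computation (Lemmas \ref{EF}--\ref{x+z}).  You instead fix an irreducible representation $\rho:F_2\to SL_2(\BC)$ with the given traces, use that $\{I,A,W,AW\}$ is a basis of $M_2(\BC)$, and reduce the vanishing of $M:=\rho(w^nE-Fw^n)$ to the vanishing of the four trace pairings.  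Your computations of $\tr(M)$, $\tr(MW)$, $\tr(MA)$, and the auxiliary traces $\tr(Ea)$, $\tr(wEa)$, $\tr(Fa)$, $\tr(Fwa)$ check out, and the word identity $Ew=aF^{-1}a^{-1}$ is a nice observation.  Both routes are viable; the paper's has the advantage that choosing $\rho(w)$ diagonal makes the four matrix entries of $M$ transparently multiples of $P'$ and $Q_n'$, sidestepping the one genuinely hard pairing.

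The proposal as written, however, is incomplete at precisely the two points you flag as ``obstacles,'' and these are not peripheral: they are the substance of the converse inclusion.  First, you must actually show that $\tr(MAW)$ lies in $(P,Q_n)$, or at least vanishes identically on $V(P,Q_n)$; without this, nothing forces $M=0$.  Note that you cannot appeal to Theorem~\ref{reduced} (radicality of $(P,Q_n)$) to upgrade ``vanishes on the variety'' to ideal membership, since that theorem's proof in the paper uses Theorem~\ref{universal} as input, so invoking it here would be circular.  Second, you must verify that $V(P,Q_n)$ meets the reducibility quadric $\{x^2+y^2+z^2-xyz-4=0\}$ only along the abelian curve $\{y=x^2-2,\ z=x^3-3x\}$; a priori $V(P,Q_n)$ could contain reducible-trace points off that curve, and for those your diagonal-representation argument with $\rho(w)=\rho(a)^2$ would not apply.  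Both verifications are concrete polynomial identities in $\BC[x,y,z]$ with Chebyshev coefficients, of roughly the same bulk as the paper's Lemmas \ref{EF}--\ref{x+z}, and the proof is not complete until they are carried out.
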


\begin{proof}
The explicit formulas \eqref{42} and \eqref{43} follow from easy calculations of the trace polynomials using \eqref{41}.

Because $E$ and $F$ are conjugate (by $w^n$) and $w^nEa=Fw^na$ in $\pi_1(X)$, we have $P=Q_n=0$ for every representation $r: \pi_1(X) \to SL_2(\BC)$.

We will prove the converse: fix a solution $(x,y,z)$ of $P=Q_n=0$, we will find a representation $r: \pi_1(X) \to SL_2(\BC)$ such that $x=\tr(r(a)),~y=\tr(r(w))$ and $z=\tr(r(aw))$.

We consider the following 3 cases:

\smallskip

\underline{Case 1:} $~y^2 \not= 4$. Then there exist $s,u,v \in \BC$ such that $s+s^{-1}=y,~u+v=x,~su+s^{-1}v=z$. Since $S_k(y)=\frac{s^{k+1}-s^{-k-1}}{s-s^{-1}}$ for all integers $k$, we have
\begin{eqnarray*}
P &=& s^{-3}(s-1) P',\\
Q_n &=& s^{-3-n} \big( (s^{2n} u - s v)P'-(1 + s) (-1 + u v)Q'_n \big),
\end{eqnarray*}
where
\begin{eqnarray*}
P' &=& s^3 u-s^4 u-s^5 u+v+s v-s^2 v-s^2 u^2 v-s^3 u^2 v+s^4 u^2 v+s^5 u^2 v\\
&& - \, u v^2-s u v^2+s^2 u v^2+s^3 u v^2,\\
Q'_n &=& s^5+s^{2 n}-s^{2+2 n} u^2+s^{4+2 n} u^2+s^3 u v-s^5 u v-s^{2 n} u v+s^{2+2 n} u v+s v^2-s^3 v^2.
\end{eqnarray*}
Since $s \not= \pm 1$, $P=Q_n=0$ is equivalent to $P'=(-1+uv)Q'_n=0$. We consider the following 2 subcases:

\medskip

\underline{Subcase 1.1}: $Q'_n=0$. Choose $r(a)=\left(
\begin{array}{cc}
u & 1 \\
uv-1 & v \\
\end{array}
\right)$ and $r(w)=\left(
\begin{array}{cc}
s & 0 \\
0 & s^{-1} \\
\end{array}
\right)$. It is easy to check $x=\tr(r(a)),~y=\tr(r(w)),~z=\tr(r(aw))$ and the calculations in the following 2 lemmas.

\begin{lemma}
One has
$$r(E)=\left(
\begin{array}{cc}
s^{-2} H_{11} & -s^{-2} H_{12} \\
s^{-2}(-1 + u v) H_{21} & -s^{-2} H_{22} \\
\end{array}
\right), \quad r(F)=\left(
\begin{array}{cc}
-s^{-3} H_{22} & -s^{-1} H_{21} \\
s^{-3} (-1 + u v) H_{12} & s^{-1} H_{11} \\
\end{array}
\right)$$
where
\begin{eqnarray*}
H_{11} &=&  s^2 u - s^4 u + v - s^2 u^2 v + s^4 u^2 v - u v^2 + s^2 u v^2,\\
H_{12} &=&  1 - s^2 u^2 + s^4 u^2 - u v + s^2 u v,\\
H_{21} &=&  -s^4 - s^2 u v + s^4 u v - v^2 + s^2 v^2,\\
H_{22} &=&  -s^4 u + v - s^2 v - s^2 u^2 v + s^4 u^2 v - u v^2 + s^2 u v^2.
\end{eqnarray*}
\label{EF}
\end{lemma}

\begin{lemma}
One has $$r(w^nE-Fw^n)=\left(
\begin{array}{cc}
s^{-3 + n} P' & -s^{-2-n} Q'_n \\
-s^{-3-n} (-1+u v) Q'_n & -s^{-2-n} P' \\
\end{array}
\right).$$
\label{difference}
\end{lemma}

Since $P'=Q'_n=0$, Lemma \ref{difference} implies that $r(w^nE-Fw^n)=0$, i.e. $r(w^nE)=r(Fw^n)$.

\medskip

\underline{Subcase 1.2}: $-1+uv=0$ then $v=u^{-1}$. In this case the equation $P'=0$ becomes $s^2u^{-1}(s-u^2)=0$ i.e. $s=u^2$. Let $$r(a)=\left(
\begin{array}{cc}
u & 0 \\
0 & u^{-1} \\
\end{array}
\right), \quad r(w)=\left(
\begin{array}{cc}
u^2 & 0 \\
0 & u^{-2} \\
\end{array}
\right).$$
Then it is easy to check that $x=\tr(r(a)),~y=\tr(r(w)),~z=\tr(r(aw))$ and $r(Ew^n)=r(w^nF).$ (Note that $r(a)$ and $r(w)$ commute in this case).

\medskip

\underline{Case 2:} $~y=2$. Then $S_k(y)=k$ for all integers $k$. Hence
\begin{eqnarray*}
P &=& (x-z)(-1+x z-z^2),\\
Q_n &=& 4-(n-1)x^2+(3n-5)xz-(2n-3)z^2.
\end{eqnarray*}
It follows that $(x,z)=(-2,-2), (2,2)$ or ($x=z+z^{-1}$ and $1 - n + (1+ n) z^2 - z^4=0$).

If $x=z=2$ we choose $$r(a)=\left(
\begin{array}{cc}
1 & 0 \\
0 & 1 \\
\end{array}
\right), \quad r(w)=\left(
\begin{array}{cc}
1 & 0 \\
0 & 1 \\
\end{array}
\right).$$

If $x=z=-2$ we choose $$r(a)=\left(
\begin{array}{cc}
-1 & 0 \\
0 & -1 \\
\end{array}
\right), \quad r(w)=\left(
\begin{array}{cc}
1 & 0 \\
0 & 1 \\
\end{array}
\right).$$

If $x=z+z^{-1}$ and $1 - n + (1+ n) z^2 - z^4=0$ we choose
$$r(a)=\left(
\begin{array}{cc}
z & 0 \\
-z^{-1} & z^{-1} \\
\end{array}
\right), \quad r(w)=\left(
\begin{array}{cc}
1 & 1 \\
0 & 1 \\
\end{array}
\right).$$

\begin{lemma}
One has
$$r(w^nE-Fw^n)=\left(
\begin{array}{cc}
0 & z^{-1}(-1 + n - (1+n)z^2+ z^4) \\
0 & 0 \\
\end{array}
\right)$$
\end{lemma}

\begin{proof}
By direct calculations we have $r(E)=\left(
\begin{array}{cc}
z & -2z+z^3 \\
0 & z^{-1}\\
\end{array}
\right),~ r(F)=\left(
\begin{array}{cc}
z & z^{-1}-z\\
0 & z^{-1}\\
\end{array}
\right)$ and $r(w^n)=\left(
\begin{array}{cc}
1 & n \\
0 & 1\\
\end{array}
\right)$. The lemma follows.
\end{proof}
Hence $x=\tr(r(a)),~y=\tr(r(w)),~z=\tr(r(aw))$ and $r(w^nE)=r(Fw^n)$.

\medskip

\underline{Case 3:} $~y=-2$. Then $S_k(y)=(-1)^k k$ for all integers $k$. Hence
\begin{eqnarray*}
P &=& 3 x + z + x^2 z + 2 x z^2 + z^3,\\
Q_n &=& (-1)^{n}\big( xP-(x+z)Q''_n \big)/2,
\end{eqnarray*}
where $Q''_n=x + 2 n x + 2 z + x^2 z + x z^2.$ It follows that the system $P=Q_n=0$ is equivalent to $P=(x+z)Q''_n=0$. We consider the following 2 subcases:

\smallskip

\underline{Subcase 3.1:} $~x+z=0$. Then it is easy to see that $P=0$ is equivalent to $x=z=0$. In this case we choose $$r(a)=\left(
\begin{array}{cc}
i & 0 \\
0 & -i \\
\end{array}
\right), \quad r(w)=\left(
\begin{array}{cc}
-1 & 0 \\
0 & -1 \\
\end{array}
\right)$$
where $i$ is the imaginary number.

\underline{Subcase 3.2:} $~x+z \not= 0$. Then $Q''_n=0$. Choose $$r(a)=\left(
\begin{array}{cc}
x/2 & (1-x^2/4)/(x+z) \\
-x-z & x/2 \\
\end{array}
\right), \quad r(w)=\left(
\begin{array}{cc}
-1 & -1 \\
0 & -1 \\
\end{array}
\right).$$

\begin{lemma}
One has
$$r(w^nE-Fw^n)=(-1)^n\left(
\begin{array}{cc}
nP-Q''_n & Q''_n/2 \\
0 & -(n-1)P+Q''_n \\
\end{array}
\right).$$
\label{x+z}
\end{lemma}

\begin{proof}
By direct calculations, we have $r(w^n) = (-1)^n \left(
\begin{array}{cc}
1 & n \\
0 &  1\\
\end{array}
\right)$ and
\begin{eqnarray*}
r(E) &=& \left(
\begin{array}{cc}
-(x + 2 z + x^2 z + x z^2)/2 & -\frac{4 + 3 x^2 + 4 x z + x^3 z + x^2 z^2}{4(x+z)} \\
(x + z) (1 + x z + z^2) &  (3 x + 2 z + x^2 z + x z^2)/2\\
\end{array}
\right),
\\
r(F) &=& \left(
\begin{array}{cc}
(x + 2 z + x^2 z + x z^2)/2 & -\frac{4 + 5 x^2 + 10 x z + 3 x^3 z + 4 z^2 + 5 x^2 z^2 + 2 x z^3}{4(x+z)} \\
(x + z) (1 + x z + z^2) &  (-5 x - 4 z - 3 x^2 z - 5 x z^2 - 2 z^3)/2.
\end{array}
\right)
\end{eqnarray*}
The lemma follows.
\end{proof}

Hence $x=\tr(r(a)),~y=\tr(r(w)),~z=\tr(r(aw))$ and $r(w^nE)=r(Fw^n)$ in all cases. It follows that the character variety of the pretzel knot $K_{2n+1}$ is exactly equal to the algebraic set $\{P=Q_n=0\}$.
\end{proof}

\subsection{The universal character ring} In this subsection, we will prove the following
\begin{theorem}
\label{reduced} The universal character ring of $K_{2n+1}$ is reduced and is equal to the ring $\BC[x,y,z]/(P,Q_n)$.
\end{theorem}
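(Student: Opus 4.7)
The approach is a two-step reduction. By Theorem \ref{universal}, the character variety $\chi(K_{2n+1})$ equals the zero locus $V(P, Q_n) \subset \BC^3$ as algebraic sets. Let $J \subset \BC[x, y, z]$ denote the defining ideal of the universal $SL_2$-character ring $U$. Since $E$ and $F$ are conjugate in $\pi_1(X)$ (by $w^n$), and since $w^n E a = F w^n a$ in $\pi_1(X)$, both $P$ and $Q_n$ are trace identities and so lie in $J$. Consequently,
\[
(P, Q_n) \;\subseteq\; J \;\subseteq\; \sqrt{J} \;=\; I\big(V(P, Q_n)\big) \;=\; \sqrt{(P, Q_n)}.
\]
The entire theorem therefore reduces to showing that $(P, Q_n)$ is a radical ideal in $\BC[x, y, z]$: under that hypothesis every inclusion above becomes an equality, which simultaneously gives $J = (P, Q_n)$ (so $U = \BC[x,y,z]/(P,Q_n)$) and the reducedness of $U$.

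To establish radicality, the plan is to apply Serre's criterion that $R_0 + S_1$ implies reduced. For $S_1$, I claim $(P, Q_n)$ is a complete intersection. From the explicit description of $\chi(K_{2n+1})$ in the proof of Theorem \ref{universal}, the variety $V(P, Q_n)$ has pure dimension $1$: the abelian component is the curve $\{y = x^2 - 2,\ z = x^3 - 3x\}$, and each non-abelian component is also $1$-dimensional, as seen from the $(s, u, v)$-cover parametrization in Case~1 and the explicit finite lists of components in Cases~2 and~3. Since two equations cut out a $1$-dimensional subvariety of $\BC^3$, the ideal $(P, Q_n)$ is a complete intersection, so $\BC[x, y, z]/(P, Q_n)$ is Cohen--Macaulay and in particular satisfies $S_1$. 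For $R_0$, it suffices to show that the $2 \times 3$ Jacobian matrix of $(P, Q_n)$ has rank $2$ at a generic point of each irreducible component of $V(P, Q_n)$. On the abelian component one substitutes $y = x^2 - 2$, $z = x^3 - 3x$ and exhibits a $2 \times 2$ minor of this Jacobian that is a non-zero polynomial in $x$. On each non-abelian component, the factorizations $P = s^{-3}(s - 1)P'$ and $Q_n = s^{-3-n}\big((s^{2n}u - sv)P' - (1 + s)(-1 + uv)Q'_n\big)$ from the proof of Theorem \ref{universal} reduce the Jacobian rank check to a non-degeneracy statement for $P', Q'_n$ along $\{P' = Q'_n = 0\}$ in the $(s, u, v)$-cover; the fibres over $y = \pm 2$ are dealt with using the short explicit lists of components provided by Cases~2 and~3.

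The principal obstacle is the uniform verification of $R_0$ on the non-abelian components for \emph{all} integers $n$ at once. The Chebychev polynomials $S_k(y)$ make $Q_n$ depend on $n$ in a non-trivial way, so one cannot hope to reduce to finitely many cases. The remedy is to exploit the $(s, u, v)$-factorizations above, which convert the required Jacobian non-degeneracy into a polynomial identity in $s, u, v$ with $s^n$ treated as a formal parameter; verifying the identity once then handles every $n$, and the behaviour over $y = \pm 2$ is disposed of by the finite case-analysis inherited from Cases~2 and~3 of the preceding proof.
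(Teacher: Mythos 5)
Your initial reduction (to showing $(P,Q_n)$ is radical) is essentially the paper's, but from there you take a genuinely different route: Serre's criterion $R_0+S_1$, using the complete-intersection structure for $S_1$ and a Jacobian rank check for $R_0$. The paper instead proves $\BC[x,y,z]/I_n$ is a \emph{free} $\BC[x]$-module (Propositions \ref{torsion-free}--\ref{free}, via the resultant of $P$ and $Q_n$), deduces from that freeness that $I_n$ is radical if and only if its single fiber $I_n|_{x=0}$ is (Proposition \ref{x0}), and then verifies the fiber by Seidenberg's lemma, exhibiting explicit square-free elements $a_nU_n\in I_n|_{x=0}\cap\BC[y]$ and $V_n\in I_n|_{x=0}\cap\BC[z]$ whose square-freeness comes for free from closed-form factorizations of Chebyshev-type polynomials into distinct linear factors. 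That trick is what makes the uniform-in-$n$ check tractable.

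Your $S_1$ step is sound once you know $\dim V(P,Q_n)=1$; this in fact follows from the paper's Lemma \ref{gcd} (coprimality of $P|_{x=x_0}$ and $Q_n|_{x=x_0}$ for every $x_0$), and then Macaulay unmixedness in the Cohen--Macaulay ring $\BC[x,y,z]$ gives pure dimension $1$ automatically --- you need not argue component by component as you suggest. The genuine gap is the $R_0$ verification, which you yourself flag as the principal obstacle and then leave as a plan rather than a proof. You assert, without exhibiting it, that some $2\times2$ minor of the Jacobian is generically nonzero along the abelian curve $\{y=x^2-2,\,z=x^3-3x\}$; and on the non-abelian components you defer to ``a polynomial identity in $s,u,v$ with $s^n$ treated as a formal parameter,'' without stating the identity or explaining what ``formal'' means given that $Q'_n$ involves the three exponents $s^{2n},\,s^{2n+2},\,s^{2n+4}$ with a genuine dependence on $n$. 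That computation is precisely what does the work --- if the Jacobian dropped rank identically on some component the theorem would be false --- and it is at least as intricate as the resultant/Chebyshev computations the paper actually performs. A smaller point: Cases 2 and 3 in the proof of Theorem \ref{universal} build representations for individual points with $y=\pm2$; they do not enumerate irreducible components, so they do not hand you ``finite lists of components'' on which to check $R_0$. Those fibers are finite subsets of one-dimensional components, which a generic-point $R_0$ check would avoid anyway. In short: the framework is viable, but the decisive verification is missing, and you have not shown it to be any easier than the paper's route.
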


\begin{proof}
Suppose we have shown that the ring $\BC[x,y,z]/(P,Q_n)$ is reduced, then it is exactly the character ring $\BC[\chi(X)]$ of $K_{2n+1}.$

Recall that $\pi_1(X)=\la a,w \mid w^nE=Fw^n\ra$, and $F_2=\la a,w \ra$ is the free group on two generators $a,w$. It is known that the universal character ring of $F_2$ is the ring $\BC[x,y,z]$ where $x=\tr(r(a)),~y=\tr(r(w))$ and $z=\tr(r(aw))$ as above. The quotient map $h: F_2 \to \pi_1(X)$ induces the epimorphism $h_{*}: \BC[x,y,z] \to \varepsilon(\CS(X))$. Since $P,Q_n$ come from traces, they are contained in $\ker h_*.$

Since $\BC[\chi(X))]$ is the quotient of $\varepsilon(\CS(X))$ by its nilradical, we have the quotient homomorphism $\phi: \varepsilon(\CS(X)) \to \BC[\chi(X))]=\BC[x,y,z]/(P,Q_n)$. Then
$$\phi \circ h_*: \BC[x,y,z] \to \varepsilon(\CS(X)) \to \BC[\chi(\pi)]=\BC[x,y,z]/(P,Q_n)$$
is a homomorphism. It follows that $\ker h_* \subseteq (P,Q_n).$ Hence we must have $\ker h_*=(P,Q_n),$ which implies $\varepsilon(\CS(X)) \cong \BC[x,y,z]/(P,Q_n) \equiv \BC[\chi(X)]$.

In the remaining part of this section we will show that  the ring $\BC[x,y,z]/(P,Q_n)$ is reduced, i.e. the ideal $I_n:=(P,Q_n)$ is radical. The proof of this fact will be divided into several steps.

\subsubsection{ $\BC[x,y,z]/I_n$ is free over $\BC[x]$.}

\begin{lemma}
\label{0}
For every $x_0 \not= 0, \pm 2$, the polynomial $P \mid_{x=x_0}$ is irreducible in $\BC[y,z]$.
\end{lemma}

\begin{proof}
Assume that $P \mid_{x=x_0}$ can be decomposed as
\begin{equation}
z^3-x_0yz^2+(y^2+x_0^2-3)z+x_0(1-y)=\big( z+f_1 \big) \big( z^2-(x_0y+f_1)z+f_2 \big),
\label{x_0}
\end{equation}
where $f_j \in \BC[y].$ Equation \eqref{x_0} implies that $f_2-f_1(x_0y+f_1)=y^2+x_0^2-3$ and $f_1f_2=x_0(1-y)$.

If $f_1$ is a constant then $f_2=x_0(1-y)/f_1$ has $y$-degree 1. Hence $f_2-f_1(x_0y+f_1)$ has $y$-degree 1 also. It follows that $f_2-f_1(x_0y+f_1) \not= y^2+x_0^2-3.$

If $f_2$ is a constant then $f_1=x_0(1-y)/f_2$. Hence
\begin{eqnarray*}
f_2-f_1(x_0y+f_1) &=& f_2 - \big( \frac{x_0}{f_2}-\frac{x_0}{f_2}y \big) \big( \frac{x_0}{f_2}-\frac{x_0}{f_2}y+x_0y \big)\\
&=& \frac{x_0^2}{f_2} \big( 1-\frac{1}{f_2} \big) y^2 - \frac{x_0^2}{f_2} \big( 1-\frac{2}{f_2} \big) y+ \big ( f_2 - \frac{x_0^2}{f_2^2} \big).
\end{eqnarray*}
Then since $f_2-f_1(x_0y+f_1)=y^2+x_0^2-3$, we have $\frac{x_0^2}{f_2} \big( 1-\frac{1}{f_2} \big)=1$, $\frac{x_0^2}{f_2} \big( 1-\frac{2}{f_2} \big)=0$, and $f_2 - \frac{x_0^2}{f_2^2}=x_0^2-3$. This implies $x_0=0$ or $x_0=\pm 2$.
\end{proof}

\begin{lemma}
For every $x_0$, the polynomials $P \mid_{x=x_0}$ and $Q_n \mid_{x=x_0}$ are co-prime in $\BC[y,z]$.
\label{gcd}
\end{lemma}

\begin{proof}
If $x_0 \not= 0, \pm 2$ then, by Lemma \ref{0}, $P \mid_{x=x_0}$ is irreducible in $\BC[y,z]$. Lemma \ref{gcd} then follows since $P \mid_{x=x_0}$ and $Q_n \mid_{x=x_0}$ have $z$-degrees 3 and 2 respectively.

At $x_0=0$, we have $P=z(-3+y^2+z^2)$ and $Q_n=a_n+b_nz^2$ where
\begin{eqnarray*}
a_n &=& S_{n-2}(y)+S_{n-3}(y)-S_{n-4}(y)-S_{n-5}(y), \\
b_n &=& -S_{n-2}(y)-S_{n-3}(y).
\end{eqnarray*}
In this case, it suffices to show that $Q_n \mid_{z^2=3-y^2} = a_n+b_n(3-y^2) \not= 0$. This is true by Lemma \ref{T'} below.

At $x_0=2$, we have $P=\big( z+1-y \big) \big( z^2-(1+y)z+2 \big)$ and $Q_n=a_n'+b_n'z+c_n'z^2$ where $a'_n, b'_n, c'_n \in \BC[y]$. When $z=y-1$, we have $Q_0=1$ and $Q_1=y-1$ and $Q_{n+1}=yQ_n-Q_{n-1}$ for all integers $n$. It follows that $Q_n\mid_{z=y-1}=S_{n}(y)-S_{n-1}(y)$ is a polynomial of $y$-degree $n$ if $n \ge 0$ and $-(n+1)$ if $n \le -1$, with leading coefficient 1. Hence $Q_n\mid_{z=y-1}$ is not identically 0. It remains to show that $Q_n=a_n'+b_n'z+c_n'z^2 \not= c'_n (z^2-(1+y)z+2)$. It suffices to show that $b'_n \mid_{y=-1} \not =0$. Indeed, when $x_0=2$ and $y=-1$ we have $b'_n=2 \big( S_{n-1}(-1)+S_{n-3}(-1)+ S_{n-4}(-1)\big).$ It is easy to check that $S_k(-1)=1$ if $k=0  \pmod{3}$, $S_k(-1)=-1$ if $k=1  \pmod{3}$ and $S_k(-1)=0$ otherwise. Hence $b'_n=2 \big( S_{n-1}(-1)+S_{n-3}(-1)+ S_{n-4}(-1)\big) \not= 0$.

The case $x_0=-2$ is similar.
\end{proof}

\begin{proposition}
$\BC[x,y,z]/I_n$ is a torsion-free $\BC[x]$-module.
\label{torsion-free}
\end{proposition}

\begin{proof}
Suppose $S \in \BC[x,y,z]$ and $(x-x_0)S \in I_n$ for some $x_0 \in \BC$. We will show that $S \in I_n.$ Indeed, we have $(x-x_0)S=fP-gQ_n$ for some $f,g \in \BC[x,y,z]$. Hence $(fP)\mid_{x=x_0}=(gQ_n)\mid_{x=x_0}$ which implies that $f\mid_{x=x_0}$ is divisible by $Q_n \mid_{x=x_0}$, since $P \mid_{x=x_0}$ and $Q_n \mid_{x=x_0}$ are co-prime in the UFD $\BC[y,z]$ by Lemma \ref{gcd}. Hence $f_{x=x_0}=h \, Q_n\mid_{x=x_0}$ for some $h \in \BC[y,z]$. From this, we may write $f=h\,Q_n+(x-x_0)Q$ for some $Q \in \BC[x,y,z]$. Then we have
$$(x-x_0)S=fP-gQ_n=\big( hQ_n+(x-x_0)Q \big)P - gQ_n=(x-x_0)QP+(hP-g)Q_n$$
which implies that $hP-g$ is divisible by $x-x_0$ and $S=QP+\frac{hP-g}{x-x_0}Q_n \in I_n$.
\end{proof}

\begin{proposition}
$\BC[x,y,z]/I_n$ is a finitely generated $\BC[x]$-module.
\label{fg}
\end{proposition}

\begin{proof}
We want to show that $y$ and $z$, considered as elements of $\BC[x,y,z]/I_n$, are integral over $\BC[x]$. Indeed, the resultant of $P$ and $Q_n$ with respect to $z$ is
$$\mathfrak r = \left| \begin{array}{ccccc}
P_0 & P_1 & P_2 & P_3 & 0 \\
0 & P_0 & P_1 & P_2 & P_3 \\
Q_{n,0} & Q_{n,1} & Q_{n,2} & 0 & 0\\
0 & Q_{n,0} & Q_{n,1} & Q_{n,2} & 0\\
0 & 0 & Q_{n,0} & Q_{n,1} & Q_{n,2}
\end{array} \right|$$
where $P_0 =x - x y,~ P_1 =-3 + x^2 + y^2,~P_2 = -xy,~P_3=1$ and
\begin{eqnarray*}
Q_{n,0} &=& S_{n-2}(y)+S_{n-3}(y)-S_{n-4}(y)-S_{n-5}(y)-S_{n-2}(y)x^2,\\
Q_{n,1} &=& ( S_{n-1}(y)+S_{n-3}(y)+S_{n-4}(y))x,\\
Q_{n,2} &=& -(S_{n-2}(y)+S_{n-3}(y)).
\end{eqnarray*}
Write $y=s+s^{-1}$ then $S_k(y)=\frac{s^{k+1}-s^{-k-1}}{s-s^{-1}}$ for all integers $k$. By a direct calculation
\begin{eqnarray*}
\mathfrak r &=& \frac{s+s^{-1}+2-x^2}{(s-s^{-1})^2(s+s^{-1}+2)}\big(s^{3n}+s^{-3n}+3s^{3n-1}+3s^{1-3n}+3s^{3n-2}+3s^{2-3n}\\
&& + \, s^{3n-3}+s^{3-3n}+s^{n+5}+s^{-n-5}+3s^{n+4}+3s^{-n-4}+3s^{n+3}+3s^{-n-3}+s^{n+2}+s^{-n-2}\\
&& - \, 2s^{n-1}-2s^{1-n}-6s^{n-2}-6s^{2-n}-6s^{n-3}-6s^{3-n}-2s^{n-4}-2s^{4-n}\\
&& + \, x^2(-s^{3n-1}-s^{1-3n}-s^{3n-2}-s^{2-3n}-2s^{n+3}-2s^{-n-3}-3s^{n+2}-3s^{-n-2}\\
&& - \, s^{n+1}-s^{-n-1}-5s^{n}-5s^{-n}-2s^{n-1}-2s^{1-n}+8s^{n-2}+8s^{2-n}+6s^{n-3}+6s^{3-n}\\
&& + \, s^{n-4}+s^{4-n})+x^4(s^{n+1}+s^{-n-1}+2s^n+2s^{-n}-2s^{n-2}-2s^{2-n}-s^{n-3}-s^{3-n})\big).
\end{eqnarray*}
Let $T_k(y)=s^k+s^{-k}$ for all intergers $k$. Then we have
\begin{eqnarray*}
\mathfrak r &=& \frac{y+2-x^2}{(y^2-4)(y+2)}\big( T_{3n}(y)+3T_{3n-1}(y)+3T_{3n-2}(y)+T_{3n-3}(y)+T_{n+5}(y)\\
&& + \, 3T_{n+4}(y)+ 3T_{n+3}(y)+T_{n+2}(y)-2T_{n-1}(y)-6T_{n-2}(y)-6T_{n-3}(y)-2T_{n-4}(y)\\
&& + \, x^2(-T_{3n-1}(y)-T_{3n-2}(y)-2T_{n+3}(y)-3T_{n+2}(y)-T_{n+1}(y)-5T_{n}(y)-2T_{n-1}(y)\\
&& + \, 8T_{n-2}(y)+6T_{n-3}(y)+T_{n-4}(y))+x^4(T_{n+1}(y)+2T_n(y)-2T_{n-2}(y)-T_{n-3}(y))\big)
\end{eqnarray*}

Note that $T_k(y)$ has $y$-degree $|k|$ with leading coefficient 1. If $n \ge 4$ then it is easy to see that $\mathfrak r$ has $y$-degree $3n-2$; moreover the coefficient of $y^{3n-2}$ is 1. Similarly, if $n \le -5$ then $\mathfrak r$ has $y$-degree $1-3n$; moreover the coefficient of $y^{1-3n}$ is 1. If $-4 \le n \le 3$ then by direct calculations, one can check that the coefficient of the highest power of $y$ in $\mathfrak r$ is 1. Hence the coefficient of the highest power of $y$ in $\mathfrak r$ is 1 for all integers $n$. It follows that $y$, considered as an element of $\BC[x,y,z]/I_n$, is integral over $\BC[x]$. 

Since $z$, considered as an element of $\BC[x,y,z]/I_n$, satisfies the equation $P=x-xy+(-3+x^2+y^2)z^2-xyz^2+z^3=0$ with 1 being the coefficient of the highest power of $z$, it is also integral over $\BC[x]$. Therefore $\BC[x,y,z]/I_n$ is a finitely generated $\BC[x]$-module.
\end{proof}

Since $\BC[x]$ is a PID, Propositions \ref{torsion-free} and \ref{fg} imply that

\begin{proposition}
$\BC[x,y,z]/I_n$ is a free $\BC[x]$-module.
\label{free}
\end{proposition}

\subsubsection{Reduction to a special case} For a $\BC[x]$-module $J$, let $J\mid_{x=x_0}:= J \otimes_{\BC[x]} \BC$, where $\BC$ is considered as an $\BC[x]$-module by reducing $x=x_0.$

\begin{proposition}
$I_n$ is radical if $I_n\mid_{x=x_0}$ is radical for some $x_0 \in \BC$.
\label{x0}
\end{proposition}

\begin{proof}
Let $R=\BC[x,y,z]$. Consider the exact sequence of $\BC[x]$-modules $$0 \to \sqrt{I_n}/I_n \to R/\sqrt{I_n} \to R/I_n \to 0.$$ By Proposition \ref{free}, $R/I_n$ is free, hence the sequence splits and $\sqrt{I_n}/I_n$ is projective. Since $\BC[x]$ is a PID, $\sqrt{I_n}/I_n$ is free. Let $k$ be the rank of the $\BC[x]$-module $\sqrt{I_n}/I_n$ then the rank of the $\BC$-module $(\sqrt{I_n}/I_n)\mid_{x=x_0}$ is always $k$ for every $x_0 \in \BC$. Hence if $I_n\mid_{x=x_0}$ is radical for some $x_0 \in \BC$ then $k=0$ which implies that $\sqrt{I_n}=I_n$.
\end{proof}

\subsubsection{ $I_n\big |_{x=0}$ is radical} By Lemma \ref{gcd}, $P\big|_{x=0}$ and $Q_n\big|_{x=0}$ are co-prime in $\BC[y,z]$. This means $I_n\big |_{x=0}$ is a zero-dimensional ideal of $\BC[y,z]$.
By Seidenberg's Lemma (see \cite[Proposition 3.7.15]{KR}), if
 there exist two non-zero free-square polynomials in $I_n\big |_{x=0} \, \cap \, \BC[y]$ and $I_n\big |_{x=0} \, \cap \, \BC[z]$ respectively, then $I_n\big |_{x=0}$ is radical.

From now on we fix $x=0$. Then $P=z(-3+y^2+z^2)$ and $Q_n=a_n+b_nz^2$ where
\begin{eqnarray*}
a_n &=& S_{n-2}(y)+S_{n-3}(y)-S_{n-4}(y)-S_{n-5}(y), \\
b_n &=& -S_{n-2}(y)-S_{n-3}(y).
\end{eqnarray*}
Let $U_n=a_n+b_n(3-y^2).$ Then $U_0=1, U_1=y+1$ and $U_{n+1}=yU_{n}-U_{n-1}.$ Hence $$U_n=S_{n}(y)+S_{n-1}(y).$$

We first consider the case $n \ge 3.$ Then $U_n$ and $a_n$ have $y$-degrees $n$ and $n-2$ respectively; moreover their leading coefficients are equal to 1.

\begin{lemma}
 One has
$$U_n=\prod_{j=1}^{n}(y-2\cos \frac{j2\pi}{2n+1}).$$
\label{T'}
\end{lemma}

\begin{proof}
It is easy to see that $U_n$ is a polynomial of degree $n$ in $y$. Note that if $y=s+s^{-1} \not= \pm 2$ then $S_k(y)=\frac{s^{k+1}-s^{-k-1}}{s-s^{-1}}$. We now take $y=e^{i\frac{j2\pi}{2n+1}}+e^{-i\frac{j2\pi}{2n+1}}=2\cos \frac{j2\pi}{2n+1}$ where $1 \le j \le n$. Then
$$S_{n}(y)=\frac{\sin ((n+1)\frac{j2\pi}{2n+1})}{\sin (\frac{j2\pi}{2n+1})}=-\frac{\sin (n\frac{j2\pi}{2n+1})}{\sin (\frac{j2\pi}{2n+1})}=-S_{n-1}(y).$$
The lemma follows.
\end{proof}

\begin{lemma}
One has
$$a_n=\prod_{k=0}^{n-3}(y-2\cos \frac{(2k+1)\pi}{2n-5}).$$
\end{lemma}

\begin{proof}
The proof is similar to that of the previous lemma.
\end{proof}

Note that $$b_n^2\, z \, P=b_n\, z^2 \left( (-3+y^2)b_n+b_nz^2 \right)=(Q_n-a_n)(Q_n-U_n).$$ Hence $a_n \, U_n=a_n \, Q_n-Q_n^2+Q_n \, U_n+b_n^2\,z\,P$ is contained in $I_n\mid_{x=0}$.
But $a_n \, U_n$ is a polynomial in $y$, hence it is actually contained in $I_n\mid_{x=0} \,\cap \,\BC[y].$ It is easy to see that $a_n \, U_n$ is square-free, i.e. does not have repeated factors.

Let
$$V_n=z\prod_{j=1}^n (-3+4\cos^2 \frac{j2\pi}{2n+1}+z^2)\prod_{k=0}^{n-3}(-3+4\cos^2 \frac{(2k+1)\pi}{2n-5}+z^2).$$
Then it is easy to show that $V_n \in \BC[z]$ is square-free. Moreover, since
\begin{eqnarray*}
V_n &=& z\prod_{j=1}^n (-3+y^2+z^2+(4\cos^2 \frac{j2\pi}{2n+1}-y^2))\\
&& \times \, \prod_{k=0}^{n-3}(-3+y^2+z^2+(4\cos^2 \frac{(2k+1)\pi}{2n-5}-y^2))\\
    &\equiv& z\prod_{j=1}^n (4\cos^2 \frac{j2\pi}{2n+1}-y^2)\prod_{k=0}^{n-3}(4\cos^2 \frac{(2k+1)\pi}{2n-5}-y^2) \quad (\text{mod}~P),\\
    &\equiv& 0 \quad (\text{mod}~(P,a_nU_n))
\end{eqnarray*}
it is contained in $I_n\mid_{x=0}.$ Hence $V_n$ is in $I_n\mid_{x=0} \, \cap \, \BC[z]$ and is square-free.

Since both $a_n \, U_n \in I_n\mid_{x=0} \cap \, \BC[y]$ and $V_n \in I_n\mid_{x=0} \cap \, \BC[z]$ are square-free, $I_n\mid_{x=0}$ is a radical ideal by Seidenberg's Lemma. Hence by Proposition \ref{x0}, $I_n$ is also radical. It follows that $R/I_n$ is reduced. Hence the ring $\BC[x,y,z]/(P,Q_n)$ is reduced and is equal to the universal character ring of $K_{2n+1}.$ This proves Theorem \ref{reduced} for the case $n \ge 3$. The case $n \le -1$ is similar (in this case $U_n$ and $a_n$ have $y$-degrees $3-n$ and $-(n+1)$ respectively; moreover their leading coefficients are equal to 1 and $-1$ respectively). If $0 \le n \le 2$ then by direct calculations one can check that $I_n\mid_{x=0}$ is reduced. This completes the proof of Theorem \ref{reduced} for all integers $n$.
\end{proof}

\section{The skein module of the $(-2,3,2n+1)$-pretzel knot}

Let $K$ be the $(-2,3,2n+1)$-pretzel knot and $\CS$ be the skein module of $S^3 \setminus K$. Recall from Section \ref{AJ} that $\CR=\BC[t^{\pm 1}]$, $\bD$ is the localization of $D=\CR[M^{\pm 1}]$ at the ideal $(1+t)$, and $\bS=\CS \otimes_{D^{\sigma}} \bD$. The goal of this section is to prove the following.

\begin{proposition}
The localized skein module $\bS$ is a finitely generated $\bD$-module.
\label{localized}
\end{proposition}

For $n=0,1$ or $2$, $K$ is a torus knot and hence its skein module $\CS$ has been understood in \cite{Mar}. Hence we consider $n \ge 3$ or $n \le -1$ only.

\subsection{Knot complement} Consider the genus 2 handlebody $H_2$, which is presented as the cylinder $D^2 \times [0,1]$ minus two vertical tubes as in Figure \ref{H2}.
\begin{figure}[htpb]
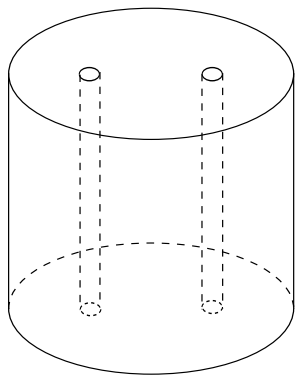

$$\psdraw{drawing-pretzel-1-H2}{1.5in} $$
\caption{The genus 2 handlebody $H_2$}
\label{H2}
\end{figure}

Let $x$ (resp. $y$) be a small loop on $\partial H_2$ circling the top of the left (resp. right) hand side tube of $\partial H_2$, and let $z$ is a small loop on $\partial H_2$ circling the top of the left hand side tube and the top of the right hand side tube of $\partial H_2$ as in Figure \ref{xyz}.

\begin{figure}[htpb]
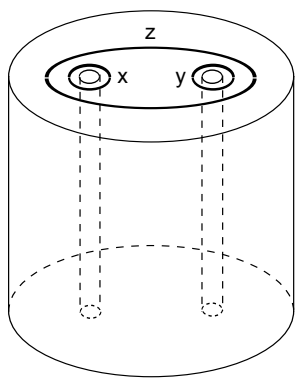

$$\psdraw{drawing-pretzel-1-xyz}{1.5in} $$
\caption{The loops $x,y$ and $z$ on $\partial H_2$}
\label{xyz}
\end{figure}

For $n \ge 3$ (resp. $n \le -1$) we consider the closed curve $C$ on $\partial H_2$ as in Figure \ref{curve} (resp. Figure \ref{curve'}), where the part of $C$ along the right hand side tube of $\partial H_2$ is drawn in the right hand side part of Figure \ref{curve} (resp. Figure \ref{curve'}). We also choose the point $pt$ on $C$ as in Figure \ref{curve} (or Figure \ref{curve'}) and consider it as the based point in $H_2$.

\begin{figure}[htpb]
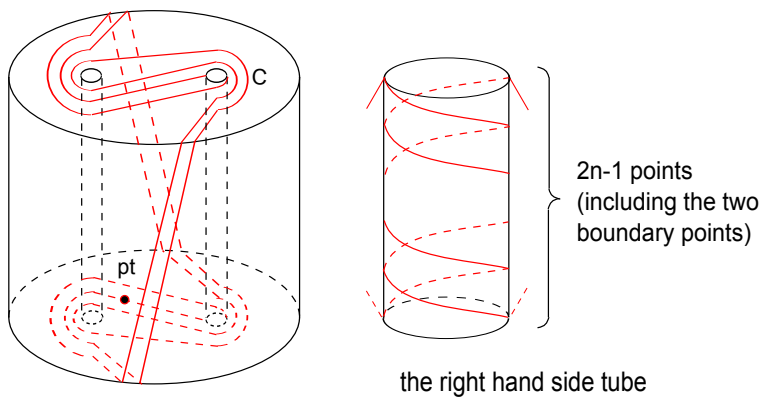

$$\psdraw{drawing-pretzel-1}{4in} $$
\caption{The closed curve $C$ on $\partial H_2$ and the based point $pt$, for $n \ge 3$.}
\label{curve}
\end{figure}

\begin{figure}[htpb]
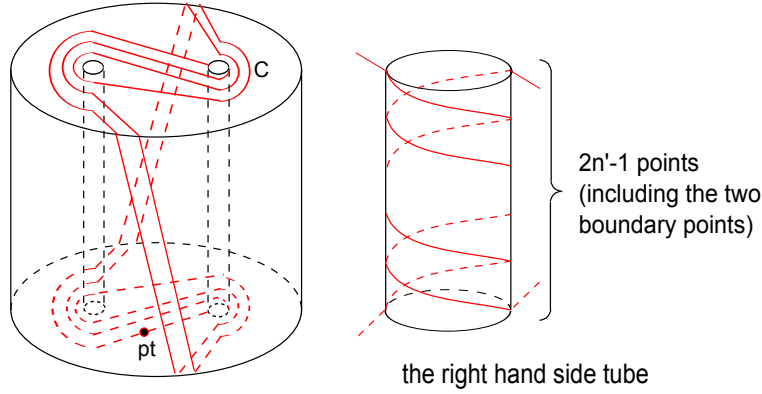

$$\psdraw{drawing-pretzel-1-negative}{4in} $$
\caption{The closed curve $C$ on $\partial H_2$ and the based point $pt$, for $n \le -1$. Here $n'=1-n$.}
\label{curve'}
\end{figure}

Let $X$ be the 3-manifold obtained by attaching a 2-handle along $C$ to $H_2$.

\begin{lemma} There is a homeomorphism between
$X$  and the complement of the $(-2,3,2n+1)$-pretzel knot $K$ in $S^3$ under which $x$ is mapped to a meridian of $K$.
\end{lemma}

\begin{proof}
Recall from Section \ref{pretzel} that $$\pi_1(S^3 \setminus K) = \la a,w \mid w^nawa^{-1}w^{-1}a^{-1}=a^{-1}w^{-1}awaw^{-1}w^n\ra$$
where $a$ is a meridian of $S^3 \setminus K$.

We have $\pi_1(H_2)=\la x,y \mid \ra$. In $\pi_1(H_2)$, one has $$C=y^{1-n}x^{-1}y^{-1}x^{-1}yxy^nxyx^{-1}y^{-1}x^{-1} \text{~if~} n \ge 3$$ and $$C=y^{1-n'}xyx^{-1}y^{-1}x^{-1}y^{n'}x^{-1}y^{-1}x^{-1}yx \text{~if~} n \le -1.$$
Here $n'=1-n.$ Since $X$ is obtained by attaching a 2-handle along $C$ to $H_2$,
$$\pi_1(X)=\la x,y \mid y^{1-n}x^{-1}y^{-1}x^{-1}yxy^nxyx^{-1}y^{-1}x^{-1} =1\ra.$$
which is isomorphic to $\pi_1(S^3 \setminus K)$ via the isomorphism sending $x,y$ to $a, w$ respectively.

Let $H_2'$ be the 3-manifold obtained by attaching a 2-handle along $x$ to $H_2$. Then $H_2'$ is just a solid torus whose core is homotopic in $H_2'$ to $y$. In $\pi_1(H_2')$, one has $x=1$ and $C=y$. It implies that the 3-manifold obtained by attaching a 2-handle along $C$ to $H_2'$ is a 3-dimensional ball $D^3$.

Note that this 3-manifold is also obtained by attaching a 2-handle along $x$ to $X$, since $x$ and $C$ are disjoint. Hence the 3-manifold obtained by attaching a 2-handle along $x$ to $X$ is $D^3$. Let $D'^3=\overline{S^3-D^3}$. Then $D'^3$ is another 3-dimensional ball in $S^3$, and $\partial D'^3=\partial D^3$ is a 2-sphere. The complement of $X$ in $S^3$ is the 3-manifold obtained by attaching a 2-handle along $x$ to $D'^3$, and hence is a solid torus. It follows that $X$ is the complement of a knot $K'$ in $S^3$.

Since $S^3 \setminus K'$ and $S^3 \setminus K$ have isomorphic fundamental groups and $K$ is a prime knot, a theorem of Gordon and Lueck \cite{GoLu} implies that $K'$ is equivalent to $K$.
\end{proof}

From now on, we identify $X$ with the knot complement $S^3 \setminus K$. Note that $x$ is a meridian of $X$.

The proofs of Proposition \ref{localized} in the cases $n \ge 3$ and $n \le -1$ are similar. Hence in the remaining part of this section, without of loss of generality we assume that $n \ge 3$.

\subsection{Skein module} We now describe the skein module $\CS$ as a quotient of $\CR[x,y,z]$ by a submodule.

\label{skein module}
{\rm  A type 1 tangle} in a 3-manifold $Y$ (with boundary) is the disjoint union of a framed link and a
framed arc in $Y$ such that  the parts of the arc near the two end
points are on the boundary $\partial Y$, and the framing on these
parts are given by vectors normal to $\partial Y$. Type 1 tangles
are considered up to isotopy relative the endpoints.

Recall that $X$ is obtained from $H_2$ by attaching a 2-handle
along the closed curve $C$. Note that $\CS(H_2)$ is isomorphic to the commutative algebra
$\CR[x,y,z]$ where $x,y$ and $z$ are as defined as above, see \cite{Pr}. The embedding of $H_2$ into $X$ gives rise to
a linear map from $\CS(H_2)\equiv \CR[x,y,z]$ to $\CS(X)$. It is
known that the map is surjective, and its kernel $N$, see
\cite{Pr,BL}, can be described through slides as
follows.

Suppose $\fa$ is a type 1 tangle in $H_2$ whose 2 endpoints are on $C$ such
that outside a small neighborhood of the 2 endpoints $\fa$ is in the
interior of $H_2$ and in a small neighborhood of the endpoints $\fa$
is on the boundary $\partial H_2$. The two end points of $\fa$
divide $C$ into 2 arcs $C_1$ and $C_2$. Let $sl(\fa)$ be $\fa \cdot C_1-\fa \cdot C_2$, considered as an element
of the skein module $\CS(H_2)$. Here $\fa \cdot C_i$ is the framed link
obtained by combining  $\fa$ and $C_i$.

It is clear that as framed links in $X$, $\fa \cdot C_1$ is isotopic to
$\fa \cdot C_2$, since one is obtained from the other by sliding over
the 2-handle attached to the curve $C$. Hence we always have
$sl(\fa)\in N$. It was known that $N$ is spanned by all possible
$sl(\fa)$, where $\fa$ is chosen among all type 1 tangles in $H_2$ with
endpoints on $C$.

There is a natural bilinear map
$\CS(\partial H_2) \otimes \CS(H_2) \to \CS(H_2)$, where $\ell
\otimes \ell' \to \ell \star \ell'$, which  is the disjoint union
of $\ell$ and $\ell'$. Hence $N$ contains all $sl(\fa) \star \CS(H_2)$, where $\fa$ is any type 1 tangles in $\partial H_2 \times [0,1]$ with endpoints on $C$.

Consider the points $u,v,u'$ and $v'$ on the curve $C$ as in Figure \ref{PQ}. Let $\tilde{P}=sl({\overline{uv}})$ and $\tilde{Q}=sl({\overline{u'v'}})$ be skein elements in $\CS(\partial H_2)$, where $\overline{uv}$ (resp. $\overline{u'v'}$) is the straight segment on $\partial H_2$ connecting $u,v$ (resp. $u',v'$) whose interior is slightly pushed inside
the interior of $H_2$ (to avoid intersections with other arcs on $\partial H_2$) and whose framing is given by vector normal to $\partial H_2$.

\begin{figure}[htpb]
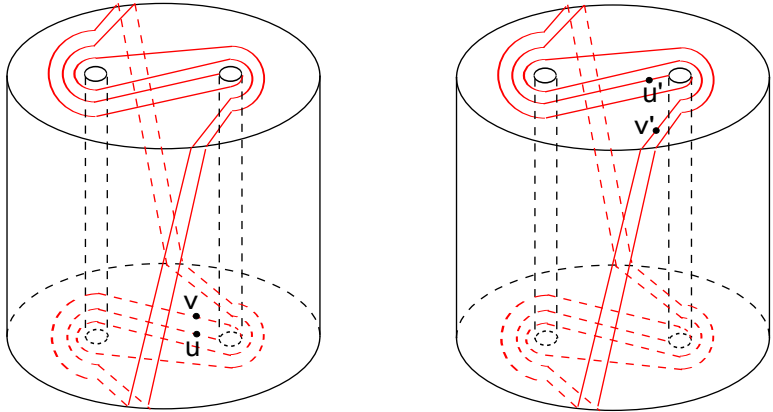

$$\psdraw{drawing-pretzel-1-PandQ}{4in} $$
\caption{The points $u,v,u'$ and $v'$ on $C$}
\label{PQ}
\end{figure}

From the discussion above, one has

\begin{proposition}
\label{N}
The skein module of the complement of the $(-2,3, 2n+1)$-pretzel knot is $\CR[x,y,z]/N$, where $N$ is an $\CR[x]$-submodule of $\CR[x,y,z]$ containing all $\tP* y^k z^l$ and $\tQ * y^k z^l$ with $k, l \ge0 $.
\end{proposition}

By isotopies in $\partial H_2$ one can check that $\tP= \tP_1- \tP_2$ in $\CS(\partial H_2)$, where $\tP_1$ and $\tP_2$ are curves on $\partial H_2$ depicted in Figure \ref{P1P2}.

\begin{figure}[htpb]
$$\psdraw{drawing-pretzel-1-P}{3.8in}$$
\caption{The curve $\tP_1$ on the left and the curve $\tP_2$ on the right}
\label{P1P2}
\end{figure}

Similarly $\tQ= \tQ_1- \tQ_2$ in $\CS(\partial H_2)$, where $\tQ_1$ and $\tQ_2$ are curves on $\partial H_2$ depicted in Figure \ref{Q1Q2}. Note that Figure \ref{k} explains the notation used in Figure \ref{Q1Q2}.

\begin{figure}[htpb]
$$\psdraw{drawing-pretzel-1-Q2}{4in} $$
\caption{The curve $\tQ_1$ on the left and the curve $\tQ_2$ on the right}
\label{Q1Q2}
\end{figure}
\begin{figure}[htpb]
$$\psdraw{drawing-pretzel-definitionk}{3in} $$
\caption{}
\label{k}
\end{figure}

Note that $\tilde{P}$ and $\tilde{Q}$ respectively correspond to taking the $SL_2$-trace of the relations $w^nawa^{-1}w^{-1}a^{-1}w^{1-n}=a^{-1}w^{-1}awa$ and  $w^nawa^{-1}w^{-1}=a^{-1}w^{-1}awaw^{n-1}a$ in the fundamental group $\pi_1(S^3 \setminus K)$. Hence $\ve(\tP\star 1)= P, \ve(\tQ\star 1)= Q$, where \begin{eqnarray*}
P &=& x - x y + (-3 + x^2 + y^2) z - x y z^2 + z^3,\\
Q &=& S_{n-2}(y)+S_{n-3}(y)-S_{n-4}(y)-S_{n-5}(y)-S_{n-2}(y) \, x^2 \\
   && + \, \big( S_{n-1}(y)+S_{n-3}(y)+S_{n-4}(y) \big) \, xz-\big( S_{n-2}(y)+S_{n-3}(y) \big)\,z^2. \nonumber
\end{eqnarray*}
are the defining equations for the character variety of the $(-2, 3, 2n+1)$-pretzel knot $K$ as in Theorem \ref{universal}.

\subsection{Degrees}

\no{Let $\tCS$ be the skein of the twice punctured torus. Then $\tCS$ acts on $\CR[x,y,z]$, which is identified with the skein module of the genus 2 handle body.
Here $\CR= \BC[t^{\pm1 }]$.}

For a monomial $\fm:=y^k z^l$ define $\deg_y(\fm)= k$, $\deg_z(\fm)= l$ and $\deg_{yz}(\fm) = k+l$.
We linearly order the monomials $y^k z^l$ by the lexicographic order of the pair $(\deg_{yz},\deg_z)$.
Using this linear order, for a non-zero element in $\bD[y,z]$ (or in $\BC(M)[y,z]$) one can define its leading term, leading coefficient, and leading monomial.

In the discussion below, when talking about polynomials, we assume that the ground ring is either $\bD$ or $\ve(\bD)=\BC(M)$.

We say that two polynomials $f$ and $g$ has {\em equivalent leading term}, and write
$$ f \eqlt g,$$
if the leading term of $f$ is a unit times the leading term of $g$. Here unit means an invertible of the ground ring. For the case when the ground ring is $\BC(M)$, a unit is
a non-zero element.

\begin{lemma}\label{lem.11}
(a) There are polynomials $c_P, c_Q \in \BC(M)[y,z]$ with $\deg_{yz}(c_P) \le 3n-3$ and $\deg_{yz}(c_Q)\le 2n$ such that
$$
c_P P + c_Q Q  \eqlt y^{3n-2}.
$$
(b) There are polynomials $d_P, d_Q \in \BC(M)[y,z]$ with $\deg_{yz}(d_P) \le 2n-4$ and $\deg_{yz}(d_Q)\le n-1$ such that
$$
d_P P + d_Q Q  \eqlt  y^{2n-2}z.
$$
\end{lemma}
\begin{proof}

(a) In general, if $P= z^3 + az^2 + bz + c$ and $Q= dz^2 + ez + f$, then the resultant $\mathfrak r$ of $P$ and $Q$ with respect to $z$ is
$$ \mathfrak r= c_Q \, Q + c_P \, P,$$
where
$$ c_P= (-d^3\, b-d\, e^2+d^2\, a\, e+d^2\, f)\, z+2\, e\, d\, f-d^2\, b\, e+d^3\, c+d\, a\, e^2-a\, d^2\, f-e^3$$
\begin{align*}
c_Q = & (e^2-d\, f+d^2\, b-d\, a\, e)\, z^2+(a\, d^2\, b-d\, a^2\, e-f\, e-d^2\, c+a\, e^2)\, z\\
& +d\, a^2\, f+f^2+d^2\, b^2-e\, a\, f+d\, e\, c+e^2\, b-2\, d\, f\, b-d^2\, a\, c-d\, a\, e\, b.
\end{align*}
In our case, with explicit $P$ and $Q$, one can  check that
$c_P$ is a polynomial in $y,z$ with $\deg_{yz}(c_P) = 3n-3$, $c_Q$ is a polynomial in $y,z$ with $\deg_{yz}(c_Q)= 2n$. By Proposition \ref{fg}, $\mathfrak r$ is a polynomial in  $y$ with (equivalent) leading term $y^{3n-2}$.

(b) Let
$$
T= P \coeff(Q,z^2) - zQ = (ad-e)z^2+(bd-f)z+cd.
$$
Then $\deg_z(T)=2$. Let
$$ \mathfrak r'= T \coeff(Q,z^2) - Q \coeff(T, z^2)=(d(bd-f)-e(ad-e))z+cd^2-f(ad-e).$$
As in the proof of Proposition \ref{fg}, one can show that $\mathfrak r'$ is a polynomial in $y,z$ with (equivalent) leading term $y^{2n-2}z$.


Note that $\mathfrak r'=d^2P-(dz+ad-e)Q$. Let $d_P=d^2$ and $d_Q=-(dz+ad-e)$. Then $\mathfrak r'=d_P P+d_QQ$. With explicit $P$ and $Q$, it is easy to see that $\deg_{yz}(d_P) = 2n-4$ and $\deg_{yz}(d_Q)= n-1$. This completes the proof of the lemma.
\end{proof}

\def\fM{\frak M}
\def\fW{\frak U}
\def\fw{\frak u}

\def\al{\alpha}

\subsection{Filtration on $\bS$}  Recall that $\CS= \CR[x][y,z]/N$, where $N$ is the submodule defined using sliding relations, see Proposition \ref{N}.
It follows that $\bS= \CS \otimes_{\CR[x]}\bD$ is given by
 $$\bS= \bD[y,z]/\bN,$$ where $\bN= N \otimes_{\CR[x]}\bD$.

Let
$\scP$ be the $\bD$-span of $\tP\star y^k z^l, k, l \ge 0$, and
$\scQ$ be the $\bD$-span of $\tQ\star y^k z^l, k,l \ge 0$. By Proposition \ref{N},
\be
\scP, \scQ \subset \bN.
\label{eq.12}
\ee

Let $\fM_1=\{z\}$. For $k\ge 2$ let $\fM_k$ be the set $\{ y^l z^{k-l}, l = 0, \dots, k-1\} \cup \{ y^{k-2}\}$.

For $m \ge 1$ let
$$ \fM_{\le m} := \cup_{k=1}^m \fM_k,$$
and $\fW_m\subset \bD[y,z]$ be the $\bD$-span of $\fM_{\le m}$, which is a free finitely generated $\bD$-module. Then $\{ \fW_m, m \ge 1\}$ forms a filtration of $\bD[y,z]$:
$$ \fW_1 \subset \fW_2 \subset  \fW_3 \dots  , \quad \bigcup \fW_m = \bD[y,z].$$

This filtration induces a filtration on the quotient $\bS= \bD[y,z]/\bN$ as follows. Let $\scX_m:= \scX \cap \fW_m$ for $\scX = \bN, \scP, \scQ$, and
$$
\bS_m := \fW_m/\bN_m.
$$
Then $\bS_m$ is the set of elements of $\bS$ which can be represented by an element in $\fW_m$.

There is the natural embedding
$$
j_m: \bS_m \hookrightarrow \bS_{m+1}.
$$
and
$$
\bS = \bigcup_{m=1}^\infty \bS_m.
$$

We will show that if $m \ge 3n$, then $j_m$ is an isomorphism. This implies that $\bS$ is a finitely generated $\bD$-module.

Recall that for a $\bD$-homomorphism $f: V \to V'$,
$\ve(f)$ is the map $(V \overset{f}\to V') \otimes_{\bD} \ve(\bD)$.

\begin{lemma} \label{lem.100}
If $\ve(j_m)$ is surjective, then $j_m$ is surjective.
\end{lemma}
\begin{proof}
Since $\bS_m$ is a finitely generated module over the local ring $\bD$, surjectivity of $j_m$ follows from Nakayama's lemma as in the proof of Lemma \ref{lem.surj}.
\end{proof}

\subsection{On $\ve(\bS_m)$} Recall that $\bS_m = \fW_m/\bN_m$. Let $p: \bD[y,z]\to \BC(M)[y,z]$ be the algebra map which sends $t$ to $-1$.

Let $\fw_m := p(\fW_m)$. Then $\fw_m$ is the $\BC(M)$-vector space spanned by $\fM_{\le m}$. 
As $\bN_m$ is a subset of $\fW_m$, one has $p(\bN_m) \subset \fw_m$.

\begin{lemma} \label{lem.13}
One has a natural isomorphism
\be
\ve(\bS_m) = \fw_m/p(\bN_m).
\ee
\end{lemma}
\begin{proof}

Taking the tensor product of the exact sequence
$ \bN_m \to \fW_m \to \bS_m \to 0$ with $\BC(M)$ over $\bD$,
 one gets
 the following
commutative diagram with exact rows
\be \begin{CD}
 \bN_m @>\iota>>  \fW_m @>>>  \bS_m @>>>  0 \\
@VVq  V @VVp V @VVV \\
\ve(\bN_m) @> \ve(\iota)>>  \ve(\fW_m) @>>>  \ve(\bS_m) @>>>  0
\end{CD}
\label{cd.1}
\ee
where $q: \bN_m \to \ve(\bN_m)$ is the natural map. Since $q$ is surjective, one has
$$ \im(\ve(\iota))=  \im ( \ve(\iota) \circ q  )= \im (p \circ \iota)= p(\bN_m).$$
The exactness of the second row of Diagram \eqref{cd.1} shows that
$$ \ve(\bS_m) = \ve(\fW_m)/\im(\ve(\iota))= \ve(\fW_m)/p(\bN_m)= \fw_m/p(\bN_m).$$
This completes the proof of the lemma.
\end{proof}

\subsection{On the filtrations $\scP_m$ and  $\scQ_m$}

\begin{lemma} \label{lem.21}
 Suppose $f(y,z) \in \BC(M)[y,z]$ such that $\deg_{yz}f \le m$. Then
$$ \tP\star f  \in \scP_{m+3} \subset \bN_{m+3}, $$
$$ \tQ\star f \in \scQ_{m+n}\subset  \bN_{m+n}.$$
\end{lemma}

\begin{proof}
Recall that $H_2$ is  the cylinder $D^2 \times [0,1]$ minus two vertical tubes. Let $D_{**}$ be the top surface of $H_2$, which is the disk $D^2 \times{1}$ minus 2 holes. Let $a_0$ and $a_2$ be the straight arcs on $D_{**}$ depicted in Figure \ref{a2a0}.

To prove Lemma \ref{lem.21} we apply the upper bound and parity of $\deg_{yz}$ using the intersection number of link diagrams (on $D_{**}$) of $\tilde{P}\star f$ and $\tilde{Q}\star f$ with the arc $a_2$, and an upper bound for $\deg_y$ using
the intersection with $a_0$ as in \cite[Lemma 5.1]{Le13}.

\begin{figure}[htpb]
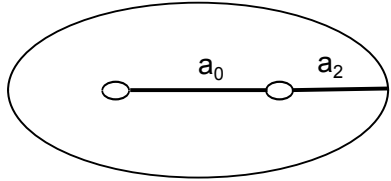

$$\psdraw{drawing-pretzel-1-a2a0}{2in} $$
\caption{The straight segments $a_0$ and $a_2$ on $D_{**}$}
\label{a2a0}
\end{figure}

We can assume that $f$ is a monomial, i.e. $f= y^k z^l$ for some $k,l \ge 0$. We will only show that $\tP\star f  \in \scP_{k+l+3}$. (The proof that $\tQ\star f \in \scQ_{k+l+n}$ is similar.)

We have $\tP \star y^k z^l = \tP_1 \star y^k z^l - \tP_2 \star y^k z^l$, where $\tP_1$ and $\tP_2$ are depicted in Figure \ref{P1P2}. Let $k_0$ (resp. $k_2$) be the intersection number of the link diagram (on $D_{**}$) of $\tP_1 \star y^k z^l$ with $a_0$ (resp. $a_2$). It is easy to see that $k_0=k+2$ and $k_2=k+l+3$.

Suppose $c_{a,b} \, y^az^b$ is a monomial of $\tP_1 \star y^k z^l \in \BC[x][y,z]$, where $a,b \ge 0$ and $c_{a,b} \in \BC[x]$. By \cite[Lemma 5.1]{Le13}, one has
$$(\deg c_{a,b}) + a \le k_0=k+2,$$
$$a+b \le k_2=k+l+3, \quad \text{and} \quad a+b \equiv k+l+3 \pmod{2}.$$

If $b>0$, then since $a+b \le k+l+3$ one has $c_{a,b} \, y^az^b \in \fW_{k+l+3}$.

If $b=0$, then $a \le k+2<k+l+3$. Since $a \equiv k+l+3 \pmod{2}$, we must have $a \le k+l+1$ which implies that $c_{a,b} \, y^a \in \fW_{k+l+3}$.

Hence $\tP_1 \star y^k z^l \in \fW_{k+l+3}$. Similarly, $\tP_2 \star y^k z^l \in \fW_{k+l+3}$. It follows that $\tP \star y^k z^l = \tP_1 \star y^k z^l - \tP_2 \star y^k z^l$ is in $\fW_{k+l+3} \cap \scP= \scP_{k+l+3}$.
\end{proof}

\subsection{Surjectivity of $\ve(j_m)$}
\begin{lemma}If $m \ge 3n$, then $\ve(j_{m-1})$ is surjective.
\label{lem.87}
\end{lemma}
\begin{proof}
By Lemma \ref{lem.13}, $\ve(\bS_m) \cong \fw_m/p(\bN_m)$, and
$$\ve(j_{m-1}) :  \fw_{m-1}/p(\bN_{m-1}) \to \fw_m/p(\bN_m)$$
descends from the embedding $\fw_{m-1} \hookrightarrow \fw_{m}$. It follows that $\ve(j_{m-1})$ is surjective if and only
$$ \fw_m \subset \fw_{m-1} + p(\bN_m),$$
which, because of \eqref{eq.12}, will follow from
\be
\fw_m \subset \fw_{m-1} +  p(\scP_m) + p(\scQ_m).
\label{eq.54}
\ee
\no{

Note that \eqref{eq.54} is equivalent to
\be
\fw_m \subset  p(\scQ_m)    \mod {  \left( \fw_{m-1} + p(\scP_m)  \right)}
\label{eq.55}
\ee
}
Hence to prove Lemma \ref{lem.87}, one only needs to prove \eqref{eq.54}.

\smallskip

\underline{Claim 1}: One has
$$
 \fw_m \ \subset \ \left [\fw_{m-1} + p(\scP_m)  + (\text{$\BC(M)$-span of  $\{ y^{m-2} z^2, y^{m-1}z, y^{m-2}\}$}) \right],
$$
i.e.
modulo $(\fw_{m-1} + p(\scP_m)) $, $\fw_m$ is $\BC(M)$-spanned by $y^{m-2} z^2, y^{m-1}z, y^{m-2}$.

\begin{proof} (of Claim 1) Modulo $\fw_{m-1}$, $\fw_{m}$ is $\BC(M)$-spanned by
$$\fM_m= \{z^m , y z^{m-1}, \dots, y^{m-2}z^2, y^{m-1} z\} \cup \{ y^{m-2}\}.$$
Note that $P \in p(\scP_3)$, with leading term $z^3$.
Modulo $p(\scP_m) $, any monomial in $\fM_m$  with $z$-degree $\ge 3$ can be reduced to a sum of terms with $z$-degree less than 3. Since the elements of $\fM_m$ with $z$-degree less than 3
are $y^{m-2} z^2, y^{m-1}z$ and $y^{m-2}$, Claim 1 follows.
\end{proof}

\underline{Claim 2}: One has
$$
 y^{m-2} z^2 \ \in \ \left [\fw_{m-1}  + p(\scQ_m)  + (\text{$\BC(M)$-span of  $\{ y^{m-1}z, y^{m-2}\}$}) \right].
 \label{eq.235}
$$

\begin{proof} (of Claim 2) From Lemma \ref{lem.21} one has
$$Q \, y^{m-n}=p(\tilde{Q} \star y^{m-n})\in p(\scQ_m).$$
Note that $Q \, y^{m-n}$ has leading monomial $y^{m-2} z^2$. Hence modulo $p(\scQ_m)$, $y^{m-2} z^2$ can be reduced to a sum of terms in $\fw_m$ such that each of them is $<y^{m-2} z^2$ (by the lexicographic order of the pair $(\deg_{yz},\deg_z)$). Since a term in $\fw_m$ that is $<y^{m-2} z^2$ is either $y^{m-1}z, y^{m-2}$, or a term in $\fw_{m-1}$, Claim 2 follows.
\end{proof}

\underline{Claim  3}: One has
$$
 y^{m-1}z \ \in \ \left [\fw_{m-1} + p(\scP_m) + p(\scQ_m)  + (\text{$\BC(M)$-span of  $\{ y^{m-2}\}$}) \right].
 \label{eq.235a}
$$

\begin{proof} (of Claim 3)
From Lemma \ref{lem.21} one has $$(d_PP + d_QQ)y^{m-(2n-1)}=\ve(\tilde{P} \star d_P y^{m-(2n-1)})+\ve(\tilde{Q} \star d_Q y^{m-(2n-1)}) \in p(\scP_m) + p(\scQ_m).$$
By Lemma \ref{lem.11}(b), one has
$$ (d_PP + d_QQ)y^{m-(2n-1)} \eqlt y^{m-1} z.$$
Hence modulo $p(\scP_m) + p(\scQ_m) $, $y^{m-1} z$ can be reduced to a sum of terms in $\fw_m$ such that each of them is $<y^{m-1}z$. Since a term in $\fw_m$ that is $<y^{m-1} z$ is either $y^{m-2}$ or a term in $\fw_{m-1}$, Claim 3 follows.
\end{proof}

\underline{Claim  4}: One has
$$
 y^{m-2} \ \in \ \left [\fw_{m-1} + p(\scP_m) + p(\scQ_m)  \right].
$$

\begin{proof} (of Claim 4) From Lemma \ref{lem.21} one has
$$(c_P P + c_QQ)y^{m-3n}=\ve(\tilde{P} \star c_P y^{m-3n})+\ve(\tilde{Q} \star c_Q y^{m-3n}) \in p(\scP_m) + p(\scQ_m).$$ By Lemma \ref{lem.11}(a), one has
$$ (c_PP + c_QQ)y^{m-3n} \eqlt y^{m-2}.$$
Modulo $p(\scP_m) + p(\scQ_m) $, $y^{m-2}$ can be reduced to a sum of terms in $\fw_m$ such that each of them is $<y^{m-2}$. Since a term in $\fw_m$ that is $<y^{m-2}$ is a term in $\fw_{m-1}$, Claim 4 follows.
\end{proof}

From Claims 1, 2, 3 and 4, one has \eqref{eq.54}. Lemma \ref{lem.87} then follows.
\end{proof}

\subsection{Proof of Proposition \ref{localized}}

By   Lemma \ref{lem.100} and Lemma \ref{lem.87}, the map $j_m$ is an isomorphism for $m \ge 3n$. It follows that
$$\bS_{3n} = \bS_{3n+1}= \bS_{3n+2}=\dots.$$
Hence $\bS= \bS_{3n}$, which is finitely generated over $\bD$.

\appendix

\numberwithin{equation}{section}
\numberwithin{figure}{section}

\section{Character varieties of two-bridge knots}

\label{pf}

We first review the description of character varieties of two-bridge knots from \cite{Le93}.
Suppose $K=\fb(p,m)$ is a two-bridge knot. Let $X=S^3 \setminus K$. Then $$\pi_1(X)=\langle a,b \mid wa=bw
\rangle,$$ where both $a$ and $b$ are meridians. The word $w$ has the form $a^{\varepsilon_1}b^{\varepsilon_2} \cdots a^{\varepsilon_{p-2}}b^{\varepsilon_{p-1}}$, where $\varepsilon_j:=(-1)^{\lfloor jm/p\rfloor}$. In particular, if
we read $w$ from right to left and interchange $a$ and $b$ then we get $w$ again. For example, $\mathfrak{b}(p,1)$ is the torus knot $T(2,p)$, and in this case $w=(ab)^{d}$, where $d:=(p-1)/2$.

We adopt the convention that if $r: \pi_1(X) \to SL_2(\BC)$ is a representation and $u$ is a word then we write $u$ also for $r(u)$ and $|u|$ for $\tr r(u)$. If $u$ is a word then $u'$ denotes the word obtained from $u$ by deleting the two letters at the two ends.

Let  $x:=|a|=|b|$ and $z:=|ab|$. It was shown in \cite{Le93} that the non-abelian character variety, i.e. the set of characters of non-abelian representations, of $\pi_1(X)$ is the zero set of the polynomial $${\Phi}_{(p,m)}(x,z)=|w|-|w'|+\dotsb+(-1)^{d-1}|w^{(d-1)}|+(-1)^d.$$ Moreover ${\Phi}_{(p,m)}(x,z)$ is a polynomial in $\BZ[x^2,z]$ with $z$-leading term $z^{d}$.

\subsection{Irreducibility over $\BQ$}

Let ${\Phi}_d(x,z)={\Phi}_{(p,1)}(x,z)$, where $d=(p-1)/2$.
It was shown in \cite[Proposition 4.3.1]{Le93} (also see below) that ${\Phi}_d(x,z)$ does not depend on $x$.

\begin{proposition}\label{phi_n}
The polynomial ${\Phi}_d(z)$ is irreducible over $\BQ$ if and only if $p=2d+1$ is prime.
\end{proposition}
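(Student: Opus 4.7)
My plan is to derive a closed-form product expression for $\Phi_d(z)$ whose roots are visibly real cyclotomic periods, and then read irreducibility off from the degree of the minimal polynomial of a generator.

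First, I would substitute $z=\lambda+\lambda^{-1}$, so that by the standard trace identity $|w^{(k)}|=\lambda^{d-k}+\lambda^{-(d-k)}$ the defining alternating sum for $\Phi_d$ becomes a pair of geometric series in $\lambda$ and $\lambda^{-1}$. Summing these and clearing denominators yields
$$\Phi_d(z) \;=\; \lambda^{-d}\,\frac{\lambda^{p}+1}{\lambda+1}.$$
Factoring $\lambda^p+1=\prod_{k=0}^{p-1}(\lambda-\zeta_k)$ with $\zeta_k:=e^{i(2k+1)\pi/p}$, cancelling the $k=d$ factor against $\lambda+1$ (since $\zeta_d=-1$), and pairing the remaining roots via $\zeta_{p-1-k}=\zeta_k^{-1}$ gives the closed form
$$\Phi_d(z) \;=\; \prod_{k=0}^{d-1}\Bigl(z-2\cos\tfrac{(2k+1)\pi}{p}\Bigr).$$

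Next, I would identify these roots as real cyclotomic periods. Writing $\xi:=2\cos(\pi/p)=\eta+\eta^{-1}$ for $\eta:=e^{i\pi/p}$, the element $\eta$ is a $2p$-th root of unity with $\eta^p=-1$; since $p=2d+1$ is odd, $\BQ(\eta)=\BQ(\zeta_p)$, and $\BQ(\xi)$ is its maximal totally real subfield (as $\eta$ satisfies $x^2-\xi x+1=0$ over $\BQ(\xi)$, while $\xi\in\BR$ and $\eta\notin\BR$). Hence $[\BQ(\xi):\BQ]=\varphi(p)/2$.

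If $p$ is prime, then $\varphi(p)/2=(p-1)/2=d=\deg\Phi_d$, so the monic polynomial $\Phi_d$ must coincide with the minimal polynomial of $\xi$ and is therefore irreducible over $\BQ$. Conversely, if $p$ is composite, pick a proper divisor $1<q<p$ (necessarily odd); then $2\cos(q\pi/p)=2\cos(\pi/(p/q))$ is one of the listed roots of $\Phi_d$, and the same analysis shows its minimal polynomial has degree $\varphi(p/q)/2<d$, so $\Phi_d$ factors nontrivially. The main obstacle I foresee is the geometric-series identity of the first step; the Galois-theoretic content is entirely standard once the product form is in hand.
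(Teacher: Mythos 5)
Your proof is correct and follows essentially the same route as the paper: both derive (via the closed form of the Chebyshev polynomials / geometric series in $\lambda$) that $\Phi_d(z)=\prod_{k=0}^{d-1}\bigl(z-2\cos\tfrac{(2k+1)\pi}{p}\bigr)$, then compute $[\BQ(\cos(\pi/p)):\BQ]=\varphi(p)/2$ using the cyclotomic field $\BQ(\zeta_{2p})=\BQ(\zeta_p)$, and compare this with $\deg\Phi_d=d$. The only cosmetic difference is in the converse, where you produce an explicit small-degree factor from a divisor $q\mid p$, whereas the paper observes directly that both directions reduce to the single equivalence $\varphi(p)/2=d\iff p$ prime.
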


\begin{proof}
It is immediate from \cite[Proposition 4.3.1]{Le93} that ${\Phi}_d(z)=S_d(z)-S_{d-1}(z)$, where $S_n(z)$'s are the Chebyshev polynomials defined by $S_0(z)=1,\,S_1(z)=z$ and $S_{n+1}(z)=z\,S_n(z)-S_{n-1}(z)$. By similar arguments as in the proof of Lemma \ref{T'} one can show that ${\Phi}_d(z)$ is an integer polynomial of degree $d$ with exactly $d$ roots given by $z= 2\cos\big(\frac{2j+1}{2d+1}\pi\big)$, $0\le j\le d-1$. It follows that the splitting field of ${\Phi}_d(z)$ is $\BQ(\cos \eta)$, where $\eta:=\pi/p$. Hence $\Phi_d(z)$ is irreducible over $\BQ$ if and only if the extension field degree  $[\BQ(\cos\eta):\BQ]$ is exactly the degree of ${\Phi}_d$, which is $d$.

Note that $\cos\eta=(e^{i\eta}+e^{-i\eta})/2$. We need to study the extension field $\BQ(e^{i\eta})/\BQ$. It is well-known that the minimal polynomial over $\BQ$ of $e^{i\eta}$ is the cyclotomic polynomial
$$C_{2p}(t)=\prod_{1\le j\le 2p,\ \gcd (j,2p)=1}(t-e^{j\pi i/p}),$$
see e.g.  \cite{La}. This is an integer polynomial whose  degree is  $\varphi(2p)=\varphi(p)$, where $\varphi$ is the Euler totient function. Thus the degree of the extension field is $[\BQ(e^{i\eta}):\BQ]=\varphi(p)$. From the identity $(t-e^{i\eta })(t-e^{-i\eta})=t^2-2(\cos\eta) t+1$, we see that $[\BQ(e^{i\eta}):\BQ(\cos\eta)]=2$, thus $[\BQ(\cos\eta):\BQ]=\varphi(p)/2$.
Therefore ${\Phi}_d(z)$ is irreducible over $\BQ$ if and only if $\varphi(p)=p-1$, which occurs if and only if $p$ is prime.
\end{proof}

\begin{proposition} One has ${\Phi}_{(p,m)}(0,z)={\Phi}_{(p,1)}(z)$. Hence if ${\Phi}_{(p,1)}(z)$ is irreducible in
$\BQ[z]$ then ${\Phi}_{(p,m)}(x,z)$ is irreducible in $\BQ[x,z]$.
\label{x=0}
\end{proposition}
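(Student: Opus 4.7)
The plan is to prove the two assertions in sequence: first the identity $\Phi_{(p,m)}(0,z) = \Phi_{(p,1)}(z)$, and then deduce irreducibility from it.

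For the identity, I would start from the Cayley-Hamilton observation that a traceless matrix $A \in SL_2(\BC)$ satisfies $A^2 = -I$, so $A^{-1} = -A$. Evaluating $|w^{(j)}|$ at $x = |a| = |b| = 0$ therefore amounts to computing the trace for representations with traceless $a, b$, in which $a^{\pm 1} = \pm a$ and $b^{\pm 1} = \pm b$. Since $w^{(j)}$ is obtained by deleting $j$ letters from each end of the alternating word $w = a^{\varepsilon_1} b^{\varepsilon_2} \cdots b^{\varepsilon_{p-1}}$, it is itself alternating in $a$ and $b$ of length $2(d-j)$, and the substitution produces $\bigl(\prod_{i=j+1}^{p-1-j} \varepsilon_i\bigr)$ times a word of the form $(ab)^{d-j}$ or $(ba)^{d-j}$, both of which have trace $|(ab)^{d-j}|(z)$.

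The crux is to show that the sign $\prod_{i=j+1}^{p-1-j} \varepsilon_i$ equals $+1$ for every $j$. The palindromy property $\varepsilon_i = \varepsilon_{p-i}$ of the two-bridge knot word means that the involution $i \mapsto p - i$ preserves $\{\varepsilon_i\}$, and it also preserves the symmetric index range $\{j+1, \ldots, p-1-j\}$. Since $p$ is odd (as $K$ is a knot), this involution has no fixed point on that set --- a fixed point would have to satisfy $2i = p$ --- so the indices pair up and the product collapses to a product of squares $\varepsilon_i^2 = 1$. Summing $|w^{(j)}|(0,z) = |(ab)^{d-j}|(z)$ with alternating signs and adding $(-1)^d$ then yields $\Phi_{(p,m)}(0,z) = \Phi_{(p,1)}(z)$ directly from the defining formula.

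For the irreducibility deduction, I would use that $\Phi_{(p,m)}(x,z)$ is monic of $z$-degree $d$ (its $z$-leading term is $z^d$). If $\Phi_{(p,m)} = F \cdot G$ were a non-trivial factorization in $\BQ[x,z]$, then viewing both factors as polynomials in $z$ over $\BQ[x]$, their $z$-leading coefficients would multiply to $1$ and hence each be a non-zero constant in $\BQ^\ast$; if either of $F, G$ had $z$-degree $0$ it would then be a unit in $\BQ[x,z]$, contradicting non-triviality. So both factors have positive $z$-degree, and specializing at $x = 0$ produces a non-trivial factorization $\Phi_{(p,1)}(z) = F(0,z) \cdot G(0,z)$ in $\BQ[z]$ into factors of positive degree, contradicting the hypothesis. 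I expect the sign-tracking via palindromy in the first part to be the only place requiring real care; the irreducibility step is essentially a monic version of Gauss's lemma and is routine once the identity is established.
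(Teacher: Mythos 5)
Your proof is correct and follows essentially the same route as the paper: evaluate at $x=0$ using $a^{-1}=-a$, $b^{-1}=-b$ from Cayley--Hamilton, show the negative exponents cancel in pairs (the paper asserts this via the palindromic symmetry $\varepsilon_i=\varepsilon_{p-i}$, which your fixed-point-free involution argument makes explicit), so each $w^{(j)}$ contributes $|(ab)^{d-j}|$ and the sum matches the torus-knot polynomial $\Phi_{(p,1)}(z)$. Your specialization argument for the irreducibility step, using that the $z$-leading term of $\Phi_{(p,m)}$ is $z^d$, is exactly the (implicit) deduction intended in the paper.
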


\begin{proof}
If $x=|a|=|b|=0$ then $a^{-1}=-a$ and $b^{-1}=-b$. (This follows from the Cayley-Hamilton theorem applying for matrices in $SL_2(\BC)$: $a+a^{-1}=|a|\,I_{2 \times 2}$, where $I_{2 \times 2}$ is the $2 \times 2$ identity matrix.)

Recall that ${\Phi}_{(p,m)}(x,z)=|w|-|w'|+\dotsb+(-1)^{d-1}|w^{(d-1)}|+(-1)^d$. From the definition of the word $w$, it is easy to see that $a^{-1}$ and $b^{-1}$ appear in pairs in $w$. This is also true for $a^{-1}$ and $b^{-1}$ in each word $w^{(j)}$, $0 \le j \le d-1$, hence $w^{(j)}$ does not change if one simultaneously replaces $a^{-1}$ by $a$ and $b^{-1}$ by $b$. Thus $w^{(j)}=(ab)^{d-j}$. Note that for the torus knot $\fb(2d+1,1)$ we have $w=(ab)^d$, hence the proposition follows.
\end{proof}

\def\Gal{{\mathrm {Gal}}}

\def\id{\mathrm {id}}
\def\disc{\mathrm {disc}}

\subsection{Irreducibility over $\BC$} For a word $u$, let $\overleftarrow{u}$ be the word obtained from $u$ by writing the letters in $u$ in reversed order. Then, by \cite[Lemma 3.2.2]{Le93}, $|\overleftarrow{u}|=|u|$ for any word $u$ in 2 letters $a$ and $b$.

Suppose $\nu_1, \nu_2, \dots, \nu_d \in \{-1,1\}$. Let $\nu_{d+j}:=\nu_{d+1-j}$ for $j=1, \dots, d$. Let $$w_j=a^{\nu_j}b^{\nu_{j+1}} \dots a^{\nu_{2d-j}}b^{\nu_{2d+1-j}}.$$ Then $w_1=a^{\nu_1}b^{\nu_2} \dots a^{\nu_{2d-1}}b^{\nu_{2d}}$ and $w_{j+1}=(w_j)'$.

Let $\mu_j:=\nu_j\nu_{j+1}$ for $j=1, \dots, d$. Note that $\mu_d=1$. Let $c_j$ be the number of $\mu_k=-1$ among $\mu_j, \dots, \mu_d$.

Recall that $x=|a|=|b|$ and $z=|ab|$. Let $X:=x^2.$

\begin{proposition}
$|w_j|$ is a polynomial in $X,z$ of total degree $d+1-j$ and
$$
|w_j|=z^{d+1-j-c_j}(z-X)^{c_j}+l.o.t.
$$
Here l.o.t. is the term of total degree $<d+1-j$.
\label{lot}
\end{proposition}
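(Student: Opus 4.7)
The plan is to induct on $n := d+1-j$, the half-length of $w_j$.

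Base case $n=1$: the palindromic identity $\nu_{d+1}=\nu_d$ yields $w_d=a^{\nu_d}b^{\nu_d}$, so $|w_d|=z$; since $\mu_d=1$ forces $c_d=0$, the formula gives $z^{1-0}(z-X)^0=z$.

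For the inductive step I would write $w_j=a^{\nu_j}\,\widetilde w_{j+1}\,b^{\nu_j}$, where $\widetilde w_{j+1}$ is obtained from $w_j$ by deleting its first and last letters (using $\nu_{2d+1-j}=\nu_j$). Though $\widetilde w_{j+1}$ and $w_{j+1}$ have different letter patterns, they carry the same sign sequence, and invariance of the trace under the swap $a\leftrightarrow b$ (valid since $\tr(a)=\tr(b)=x$) gives $|\widetilde w_{j+1}|=|w_{j+1}|$. Applying cyclic invariance and the $SL_2$ identity $\tr(XY)=\tr(X)\tr(Y)-\tr(XY^{-1})$ with $X=b^{\nu_j}a^{\nu_j}$ (which has trace $z$) and $Y=\widetilde w_{j+1}$, one obtains
\[
|w_j|=z\cdot|w_{j+1}|-|Y_j|,\qquad Y_j:=b^{\nu_j}a^{\nu_j}\,\widetilde w_{j+1}^{-1}.
\]

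If $\mu_j=+1$, so $\nu_{j+1}=\nu_j$, then factoring $\widetilde w_{j+1}=b^{\nu_j}u$ with $u=a^{\nu_{j+2}}b^{\nu_{j+3}}\cdots a^{\nu_{2d-j}}$ gives $\widetilde w_{j+1}^{-1}=u^{-1}b^{-\nu_j}$; cyclic permutation then yields $|Y_j|=|a^{\nu_j}u^{-1}|=|u\,a^{-\nu_j}|$, and the palindromic relation $\nu_{2d-j}=\nu_{j+1}=\nu_j$ makes the last letter $a^{\nu_j}$ of $u$ cancel $a^{-\nu_j}$, leaving exactly $w_{j+2}$. Hence $|Y_j|=|w_{j+2}|$, which by induction has total degree at most $n-2<n$. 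If $\mu_j=-1$ the analogous cancellation fails; one instead reaches $|Y_j|=|b^{2\nu_j}a^{\nu_j}u^{-1}|$ after cyclic rearrangement, and the Cayley--Hamilton identity $b^{2\nu_j}=xb^{\nu_j}-I$ reduces $|Y_j|$ to a combination of traces of strictly shorter words; a second Cayley--Hamilton handles the $a^{-2\nu_j}$ that now appears because $u$ ends with $a^{-\nu_j}$. Careful bookkeeping of degrees shows $|Y_j|=X\cdot|w_{j+1}|+(\text{total degree}<n)$, giving $|w_j|=(z-X)\cdot|w_{j+1}|+(\text{l.o.t.})$.

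Combining the two cases yields $|w_j|=\xi_j\cdot|w_{j+1}|+(\text{l.o.t.})$ with $\xi_j=z$ when $\mu_j=+1$ and $\xi_j=z-X$ when $\mu_j=-1$. Feeding in the induction hypothesis $|w_{j+1}|=z^{(n-1)-c_{j+1}}(z-X)^{c_{j+1}}+(\text{l.o.t.})$, together with the fact that $c_j=c_{j+1}+1$ exactly when $\mu_j=-1$, produces the claimed leading term $z^{n-c_j}(z-X)^{c_j}$ for $|w_j|$; the total degree bound and integrality in $\BZ[X,z]$ follow by tracking degrees along the recursion. The hard part will be the $\mu_j=-1$ case, specifically verifying that the chain of Cayley--Hamilton reductions together with the associated odd-length auxiliary traces assemble into a polynomial in $\BZ[X,z]$ of exactly the right total degree, rather than a polynomial in $\BZ[x,z]$ with non-trivial odd-$x$ part. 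An alternative, possibly cleaner, strategy would be the two-step recursion $|w_j|=|UV|\cdot|w_{j+2}|-|U\,w_{j+2}^{-1}\,V|$ with $U=a^{\nu_j}b^{\nu_{j+1}}$ and $V=a^{\nu_{j+1}}b^{\nu_j}$: here $|UV|$ is either $z^2-2$ or the commutator trace $z^2-Xz+2X-2$, both already polynomials in $(X,z)$ of degree $2$, so only the leading part of $|U\,w_{j+2}^{-1}\,V|$ need be controlled.
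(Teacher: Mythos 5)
Your setup is the same as the paper's: downward induction on $j$, base case $j=d$ with $|w_d|=z$, the identity $\tr(XY)=\tr(X)\tr(Y)-\tr(XY^{-1})$ applied after writing $w_j=a^{\nu_j}\widetilde w_{j+1}b^{\nu_j}$, and the observation $|\widetilde w_{j+1}|=|w_{j+1}|$ (the paper obtains the latter from $|\overleftarrow{u}|=|u|$ together with the palindromic symmetry $\nu_{d+j}=\nu_{d+1-j}$; your $a\leftrightarrow b$ swap argument is an equivalent way to see it). The $\mu_j=+1$ case is correct and matches the paper's recursion $|w_j|=z|w_{j+1}|-|w_{j+2}|$.

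The gap is precisely the part you flag as ``the hard part,'' and it is not a cosmetic one. After the two Cayley--Hamilton reductions in the $\mu_j=-1$ case, what appears alongside $|w_{j+1}|$ and $|w_{j+2}|$ is not just lower-order noise you can bound by ``careful bookkeeping''; it is the pair of \emph{odd-length} traces $x|u_{j+1}|$ and $x|v_{j+1}|$, where $u_j:=w_{j+1}a^{\nu_j}$ and $v_j:=b^{\nu_j}w_{j+1}$. Concretely, the paper's equation \eqref{t} reads
\[
|w_j|=(z-x^2)|w_{j+1}|-|w_{j+2}|+x|u_{j+1}|+x|v_{j+1}|,
\]
so your desired $|Y_j|=X|w_{j+1}|+(\text{l.o.t.})$ is \emph{equivalent} to the assertion that $x|u_{j+1}|$ and $x|v_{j+1}|$ are polynomials in $(X,z)$ of total degree $\le d-j$. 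That assertion has no reason to follow from the inductive bound on the $|w_{j'}|$'s alone: the $u_j,v_j$ are genuinely different words (odd length, not of the form $w_k$), and their traces satisfy their own coupled recursions ($x|u_j|=x^2|w_{j+1}|-x|v_{j+1}|$ or $x|u_j|=x|v_{j+1}|$, depending on $\mu_j$, and symmetrically for $v_j$). The paper closes the induction by \emph{strengthening the hypothesis}: it proves, simultaneously with the degree claim for $|w_j|$, that $x|u_j|$ and $x|v_j|$ lie in $\BZ[X,z]$ with total degree $\le d+1-j$. This is the missing ingredient in your proposal; without it the induction does not close, and ``careful bookkeeping'' cannot substitute for it because the required bound on the auxiliary traces must itself be established inductively. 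Your alternative two-step recursion has the same issue, since $|U\,w_{j+2}^{-1}V|$ again decomposes into precisely these auxiliary odd-length traces. (A second, minor omission: your one-step recursion already invokes $w_{j+2}$, so you need the convention $w_{d+1}:=1$, $|w_{d+1}|=2$, to start it.)
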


\begin{proof}
Let $u_j:=w_{j+1}a^{\nu_j}=a^{\nu_{j+1}}  \dots b^{\nu_{j+1}}  a^{\nu_j}$ and $v_j:=b^{\nu_j}w_{j+1}=b^{\nu_j}a^{\nu_{j+1}}   \dots   b^{\nu_{j+1}}$ for $j=1, \dots, d$, where $w_{d+1}:=1$. We will show that

1) $x|u_j|$ and $x|v_j|$ are polynomials in $X,z$ of total degree $\le d+1-j$,

2) $|w_j|$ is a polynomial in $X,z$ of total degree $d_j$ and $$|w_j|=z^{d+1-j-c_j}(z-X)^{c_j}+l.o.t.,$$
by induction on $1 \le j \le d$, beginning with $j=d$ which is obvious.

Suppose $j \le d-1$. Consider the following 2 cases: $\nu_j\nu_{j+1}=1$ and $\nu_j\nu_{j+1}=-1$.

\textit{Case 1: $\nu_j\nu_{j+1}=1$,} i.e. $\nu_{j}=\nu_{j+1}$. Then $c_j=c_{j+1}$ and
\begin{eqnarray*}
x|u_j| &=& x|(a^{\nu_{j+1}}b^{\nu_{j+2}} \dots a^{\nu_{j+2}}b^{\nu_{j+1}})a^{\nu_{j+1}}| \\
&=& x|(a^{\nu_{j+1}}b^{\nu_{j+2}} \dots a^{\nu_{j+2}}b^{\nu_{j+1}})(xI_{2 \times 2}-a^{-\nu_{j+1}})| \\
&=& x^2|a^{\nu_{j+1}}b^{\nu_{j+2}} \dots a^{\nu_{j+2}}b^{\nu_{j+1}}|-x|b^{\nu_{j+2}} \dots a^{\nu_{j+2}}b^{\nu_{j+1}}| \\
&=& x^2|w_{j+1}|-x|\overleftarrow{v_{j+1}}|= x^2|w_{j+1}|-x|v_{j+1}|, \\
x|v_j| &=& x|b^{\nu_{j+1}}(a^{\nu_{j+1}}b^{\nu_{j+2}} \dots a^{\nu_{j+2}}b^{\nu_{j+1}})| \\
&=& x|(xI_{2 \times 2}-b^{-\nu_{j+1}})(a^{\nu_{j+1}}b^{\nu_{j+2}} \dots a^{\nu_{j+2}}b^{\nu_{j+1}})| \\
&=&x^2|a^{\nu_{j+1}}b^{\nu_{j+2}} \dots a^{\nu_{j+2}}b^{\nu_{j+1}}|-x|a^{\nu_{j+1}}b^{\nu_{j+2}} \dots a^{\nu_{j+2}}| \\
&=& x^2|w_{j+1}|-x|\overleftarrow{u_{j+1}}|=x^2|w_{j+1}|-x|u_{j+1}|, \\
|w_j| &=& |(a^{\nu_{j+1}}b^{\nu_{j+1}})a^{\nu_{j+2}} \dots b^{\nu_{j+2}}(a^{\nu_{j+1}}b^{\nu_{j+1}})|\\
&=& |(a^{\nu_{j+1}}b^{\nu_{j+1}})a^{\nu_{j+2}} \dots b^{\nu_{j+2}}(zI_{2 \times 2}-(a^{\nu_{j+1}}b^{\nu_{j+1}})^{-1})|\\
&=& z |a^{\nu_{j+1}}b^{\nu_{j+1}}a^{\nu_{j+2}} \dots b^{\nu_{j+2}}|-|a^{\nu_{j+2}} \dots b^{\nu_{j+2}}|\\
&=& z |b^{\nu_{j+1}}a^{\nu_{j+2}} \dots b^{\nu_{j+2}}a^{\nu_{j+1}}|-|w_{j+2}|\\
&=& z|\overleftarrow{w_{j+1}}|-|w_{j+2}|= z|w_{j+1}|-|w_{j+2}|.
\end{eqnarray*}
It follows that 1) and 2) hold true for $j$, by induction hypothesis.

\textit{Case 2: $\nu_j\nu_{j+1}=-1$,} i.e. $\nu_j=-\nu_{j+1}$. Then $c_j=c_{j+1}+1$ and
\begin{eqnarray}
x|u_j| &=& x|(a^{\nu_{j+1}}b^{\nu_{j+2}} \dots a^{\nu_{j+2}}b^{\nu_{j+1}})a^{-\nu_{j+1}}|=x|b^{\nu_{j+2}} \dots a^{\nu_{j+2}}b^{\nu_{j+1}}|\nonumber\\
&=& x|\overleftarrow{v_{j+1}}| =x|v_{j+1}|,\nonumber\\
x|v_j| &=& x|b^{-\nu_{j+1}}(a^{\nu_{j+1}}b^{\nu_{j+2}} \dots a^{\nu_{j+2}}b^{\nu_{j+1}})|=x|a^{\nu_{j+1}}b^{\nu_{j+2}} \dots a^{\nu_{j+2}}|\nonumber\\
&=& x|\overleftarrow{u_{j+1}}|=x|u_{j+1}|,\nonumber\\
|w_j| &=& |a^{-\nu_{j+1}}b^{\nu_{j+1}}a^{\nu_{j+2}} \dots b^{\nu_{j+2}}a^{\nu_{j+1}}b^{-\nu_{j+1}}| \nonumber\\
&=&|(x I_{2 \times 2}-a^{\nu_{j+1}})b^{\nu_{j+1}}a^{\nu_{j+2}} \dots b^{\nu_{j+2}}a^{\nu_{j+1}}(xI_{2 \times 2}-b^{\nu_{j+1}})| \nonumber\\
&=& x^2|b^{\nu_{j+1}}a^{\nu_{j+2}} \dots b^{\nu_{j+2}}a^{\nu_{j+1}}|+|a^{\nu_{j+1}}b^{\nu_{j+1}}a^{\nu_{j+2}} \dots b^{\nu_{j+2}}a^{\nu_{j+1}}b^{\nu_{j+1}}| \nonumber\\
&& - \, x|a^{\nu_{j+1}}b^{\nu_{j+1}}a^{\nu_{j+2}} \dots b^{\nu_{j+2}}a^{\nu_{j+1}}|-x|b^{\nu_{j+1}}a^{\nu_{j+2}} \dots b^{\nu_{j+2}}a^{\nu_{j+1}}b^{\nu_{j+1}}|.
\label{tt}
\end{eqnarray}

We have
\begin{equation}
|b^{\nu_{j+1}}a^{\nu_{j+2}} \dots b^{\nu_{j+2}}a^{\nu_{j+1}}|=|\overleftarrow{w_{j+1}}|=|w_{j+1}|.
\label{t0}
\end{equation}
By Case 1,
\begin{equation}
|a^{\nu_{j+1}}b^{\nu_{j+1}}a^{\nu_{j+2}} \dots b^{\nu_{j+2}}a^{\nu_{j+1}}b^{\nu_{j+1}}|=z|w_{j+1}|-|w_{j+2}|.
\label{t11}
\end{equation}
 We have
\begin{eqnarray}
x|a^{\nu_{j+1}}b^{\nu_{j+1}}a^{\nu_{j+2}} \dots b^{\nu_{j+2}}a^{\nu_{j+1}}| &=&x |(xI_{2 \times 2}-a^{-\nu_{j+1}}) b^{\nu_{j+1}}a^{\nu_{j+2}} \dots b^{\nu_{j+2}}a^{\nu_{j+1}}| \nonumber\\
&=&x^2|b^{\nu_{j+1}}a^{\nu_{j+2}} \dots b^{\nu_{j+2}}a^{\nu_{j+1}}|-x|b^{\nu_{j+1}}a^{\nu_{j+2}} \dots b^{\nu_{j+2}}| \nonumber\\
&=&x^2|\overleftarrow{w_{j+1}}|-x|v_{j+1}|=x^2|w_{j+1}|-x|v_{j+1}|.
\label{t21}
\end{eqnarray}
Similarly,
\begin{equation}
x|b^{\nu_{j+1}}a^{\nu_{j+2}} \dots b^{\nu_{j+2}}a^{\nu_{j+1}}b^{\nu_{j+1}}|=x^2|w_{j+1}|-x|u_{j+1}|.
\label{t3}
\end{equation}
From \eqref{tt}, \eqref{t0}, \eqref{t11}, \eqref{t21} and \eqref{t3}, we get
\begin{equation}
|w_j|=(z-x^2)|w_{j+1}|-|w_{j+2}|+x|u_{j+1}|+x|v_{j+1}|.
\label{t}
\end{equation}
Hence, by induction hypothesis, $|w_j|$ is a polynomial in $X,z$ of total degree $d+1-j$ and
$$|w_j|=(z-X)z^{d-j-c_{j+1}}(z-X)^{c_{j+1}}+l.o.t.=z^{d+1-j-c_j}(z-X)^{c_j}+l.o.t.$$
where l.o.t. is the term of total degree $<d+1-j$, since $c_j=c_{j+1}+1.$
\end{proof}

Applying Proposition \ref{lot} with $\nu_j=\ve_j=(-1)^{\lfloor jm/p\rfloor}$, $${\Phi}_{(p,m)}(x,z)=|w|-|w'|+\dots+(-1)^{d-1}|w^{(d-1)}|+(-1)^d$$
is a polynomial in $X,z$ of total degree $d=\frac{p-1}{2}$.

Let $\Gamma_{(p,m)}(X,z):={\Phi}_{(p,m)}(x,z) \in \BZ[X,z]$. Then, also by Proposition \ref{lot},
$$
\Gamma_{(p,m)}(X,z)=z^{d-c}(z-X)^c+l.o.t.,
$$
where l.o.t. is the term of total degree $<d$ and $c$ is the number of $\mu_k=-1$ among $\mu_1, \dots, \mu_d$. Note that $c=\frac{m-1}{2}$, see e.g. \cite{Bur}.

\begin{corollary}
 $(0,\frac{p-1}{2})$ and $(\frac{m-1}{2},\frac{p-m}{2})$ are vertices of the Newton polygon of the polynomial $\Gamma_{(p,m)}(X,z) \in \BZ[X,z]$.
 \label{Gamma}
\end{corollary}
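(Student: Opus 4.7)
The plan is to read off the vertices directly from the leading-total-degree expansion provided by Proposition~\ref{lot}. First I would apply the binomial theorem to the top-degree piece:
\begin{equation*}
z^{d-c}(z-X)^{c}\;=\;\sum_{k=0}^{c}\binom{c}{k}(-1)^{k}X^{k}z^{d-k}.
\end{equation*}
Combined with the fact that the remainder terms in $\Gamma_{(p,m)}(X,z)$ have total degree strictly less than $d$, this shows that every monomial $X^{i}z^{j}$ appearing in $\Gamma_{(p,m)}$ satisfies $i+j\le d$, with equality forcing $(i,j)=(k,d-k)$ for some $0\le k\le c$.

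Next I would verify that the two candidate vertices actually carry non-zero coefficients. The coefficient of $z^{d}$ (i.e.\ the point $(0,\frac{p-1}{2})$) is $\binom{c}{0}=1$, and the coefficient of $X^{c}z^{d-c}$ (i.e.\ the point $(\frac{m-1}{2},\frac{p-m}{2})$) is $(-1)^{c}$; both are non-zero. In particular these lattice points lie in the support of~$\Gamma_{(p,m)}$.

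It then remains to argue that each is actually a vertex of the convex hull (the Newton polygon). For $(0,\frac{p-1}{2})$: the linear functional $(i,j)\mapsto j$ attains its maximum value $d$ on the support only at this point, since any other support point $(i,j)$ either has $i+j=d$ with $i\ge 1$, giving $j\le d-1$, or lies in the lower-order part with $i+j<d$, again forcing $j<d$. For $(\frac{m-1}{2},\frac{p-m}{2})$: the functional $(i,j)\mapsto i+\tfrac{1}{2}j$ (or more simply, maximize $i$ subject to $i+j=d$) is uniquely maximized at $(c,d-c)$ among support points, because lower-order terms satisfy $i+\tfrac12 j \le \tfrac12(i+j)+\tfrac12 i<\tfrac{d}{2}+\tfrac{c}{2}+\epsilon$ bookkeeping that I would tighten, while on the top line $i+j=d$ the maximum of $i$ is $c$. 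Thus each of the two points is an extreme point, hence a vertex of the Newton polygon.

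There is no real obstacle here beyond bookkeeping: once Proposition~\ref{lot} is in hand, the statement reduces to a combinatorial inspection of monomial supports. The only thing to be careful about is choosing the two supporting linear functionals so that each proposed vertex is the \emph{unique} maximizer on the support, which is immediate from the binomial expansion above.
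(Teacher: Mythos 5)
Your overall strategy is sound and matches what the corollary actually rests on: Proposition~\ref{lot} pins down the part of $\Gamma_{(p,m)}$ of top total degree $d$ as $z^{d-c}(z-X)^c$, whose support $\{(k,d-k): 0\le k\le c\}$ (with nonzero binomial coefficients that lower-order terms cannot cancel) sits on the supporting line $i+j=d$. The endpoint $(0,d)$ is fine as you argue: the functional $(i,j)\mapsto j$ is uniquely maximized there.

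However, your justification that $(c,d-c)$ is a vertex via the functional $(i,j)\mapsto i+\tfrac12 j$ has a real gap, not mere bookkeeping. Proposition~\ref{lot} controls only the \emph{total} degree of the lower-order terms, not their $X$-degree. A lower-order monomial $X^i z^j$ with $i+j\le d-1$ but $i$ large would give $i+\tfrac12 j=\tfrac12(i+j)+\tfrac12 i\le \tfrac{d-1}{2}+\tfrac{i}{2}$, which can exceed $\tfrac{c+d}{2}$ as soon as $i\ge c+1$. Nothing in the proposition you are citing rules this out, so that particular functional does not obviously separate $(c,d-c)$ from the rest of the support. The clean repair is what you gesture at in your parenthetical: the half-plane $i+j\le d$ supports the Newton polygon, its intersection with the boundary line $i+j=d$ is exactly the segment from $(0,d)$ to $(c,d-c)$ (since the support on that line is precisely $\{(k,d-k):0\le k\le c\}$), and this intersection is therefore a face of the polygon; the endpoints of a one-dimensional face are vertices, and if $c=0$ the face is a single point, again a vertex. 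Replace the $i+\tfrac12 j$ computation with this face argument and the proof is complete.
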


\begin{theorem}
(i) Suppose $\Phi_{(p,m)}(x,z)$ is irreducible in $\BQ[x,z]$ and $\gcd(\frac{p-1}{2},\frac{m-1}{2})=1$. Then $\Phi_{(p,m)}(x,z)$ is irreducible in $\BC[x,z]$.

(ii) Suppose $p$ is prime and $\gcd(\frac{p-1}{2},\frac{m-1}{2})=1$. Then $\Phi_{(p,m)}(x,z)$ is irreducible in $\BC[x,z]$.
\label{prime}
\end{theorem}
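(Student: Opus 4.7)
\textbf{Proof plan for Theorem~\ref{prime}.} The plan is to reduce part~(ii) to part~(i) via Propositions~\ref{phi_n} and~\ref{x=0}, and then to establish~(i) by combining the $\BQ[x,z]$-irreducibility hypothesis with a Galois argument applied to the leading form of $\Phi_{(p,m)}$ supplied by Proposition~\ref{lot} and Corollary~\ref{Gamma}.

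For the reduction from~(ii) to~(i): when $p$ is prime, Proposition~\ref{phi_n} says that $\Phi_{(p,1)}(z)$ is irreducible in $\BQ[z]$, and Proposition~\ref{x=0} identifies $\Phi_{(p,m)}(0,z)$ with $\Phi_{(p,1)}(z)$. Since $\Phi_{(p,m)}(x,z)$ is monic in $z$ of degree $d=(p-1)/2$, any non-trivial factorization $\Phi_{(p,m)}=fg$ in $\BQ[x,z]$ has both $f,g$ of positive $z$-degree, so specializing $x=0$ would yield a non-trivial factorization of $\Phi_{(p,1)}(z)$ in $\BQ[z]$, a contradiction. Thus the hypothesis of~(i) holds and part~(ii) follows from part~(i).

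For part~(i), I would suppose that $\Phi_{(p,m)}=f_1\cdots f_k$ is a factorization into irreducibles in $\BC[x,z]$. Since $\Phi_{(p,m)}$ is $z$-monic, the $z$-leading coefficients of the $f_i$ are scalars, so one can normalize each $f_i$ to be monic in $z$. A standard descent argument then places each $f_i$ in $\overline{\BQ}[x,z]$, and $\BQ[x,z]$-irreducibility of $\Phi_{(p,m)}$ forces $\Gal(\overline{\BQ}/\BQ)$ to act transitively on $\{f_1,\dots,f_k\}$ (otherwise grouping by orbits would produce a non-trivial $\BQ[x,z]$-factorization). Now equip $\BC[x,z]$ with the weighted grading $\deg x=1$, $\deg z=2$; by Corollary~\ref{Gamma} the top weighted-degree form of $\Phi_{(p,m)}$ is $z^{d-c}(z-x^2)^c$, with $c=(m-1)/2$. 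Since $z$ and $z-x^2$ are each irreducible in the UFD $\BC[x,z]$, this top form factors uniquely, which forces
\[\mathrm{top}(f_i)=\gamma_i\, z^{a_i}(z-x^2)^{b_i},\qquad \sum_i a_i=d-c,\quad \sum_i b_i=c.\]
Because each $z^{a}(z-x^2)^{b}$ lies in $\BQ[x,z]$, applying $\sigma\in\Gal(\overline{\BQ}/\BQ)$ to $\mathrm{top}(f_i)$ only alters the scalar $\gamma_i$; hence Galois-conjugate factors share the exponent pair $(a_i,b_i)$. Transitivity then forces a common $(a,b)$ with $ka=d-c$ and $kb=c$, so $k$ divides $\gcd(d-c,c)=\gcd(d,c)$. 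The hypothesis $\gcd(d,c)=1$ now gives $k=1$.

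The main technical point is the interplay between the Galois action and the uniqueness of the weighted-top factorization: once we know the $\overline{\BQ}$-irreducible factors can be chosen monic in $z$ and that their leading forms are monomial-type products of $z$ and $z-x^2$, the Galois-transitivity immediately delivers a common exponent pattern, and the coprimality hypothesis closes the argument. Everything else reduces to unpacking Proposition~\ref{lot} and the monic-descent for factors in $\overline{\BQ}[x,z]$.
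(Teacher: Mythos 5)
Your proof is correct, and it takes a route that is genuinely different from the paper's, though both exploit the same combinatorial structure. The paper proves (i) in two stages: it first converts to $\Gamma_{(p,m)}(X,z)\in\BZ[X,z]$ (where $X=x^2$), invokes the absolute-factorization criterion of \cite[Proposition~3]{BCG} on the Newton polygon of $\Gamma$ using the two vertices $(0,\tfrac{p-1}{2})$ and $(\tfrac{m-1}{2},\tfrac{p-m}{2})$ to conclude $\Gamma$ is $\BC$-irreducible, and then runs a separate ad hoc argument to transfer irreducibility from $\Gamma(X,z)$ back to $\Phi(x,z)=\Gamma(x^2,z)$ — the latter step pairs up factors $f(x,z)$ and $f(-x,z)$ and uses that $\Phi_{(p,m)}(0,z)$ has no repeated factors. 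You instead keep $\Phi$ in the variables $(x,z)$ throughout, equipped with the weighted grading $\deg x=1$, $\deg z=2$ (which is tailored so that the weighted top form of $\Phi$ is exactly $z^{d-c}(z-x^2)^c$, the content of Proposition~\ref{lot} and Corollary~\ref{Gamma}); you then run a Galois-descent/orbit argument on the $\overline{\BQ}$-irreducible factors, observing that their top forms must be $\BQ$-rational monomials in $z$ and $z-x^2$, so Galois transitivity forces a uniform exponent pair and $k\mid\gcd(d-c,c)=\gcd(d,c)=1$. In effect you have inlined a self-contained special case of the Newton-polygon criterion that the paper cites, which makes the argument more transparent and eliminates the need for the separate $\Gamma$-to-$\Phi$ step. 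The reduction of (ii) to (i) is the same in both. Two small points worth making explicit if you write this up: first, the fact that the absolute factorization of $\Phi$ lives over $\overline{\BQ}$ relies on the standard fact that irreducibility over one algebraically closed field persists over any larger one; second, you should note $\Phi$ is squarefree over $\BC$ (e.g., because $\gcd(\Phi,\partial_z\Phi)=1$ over $\BQ$ persists over $\BC$) so that the Galois action is an honest transitive permutation of $k$ distinct monic factors rather than of factors with multiplicities.
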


\begin{proof}
(i) Suppose $\Phi_{(p,m)}(x,z)$ is irreducible in $\BQ[x,z]$. Then $\Gamma_{(p,m)}(X,z)$ is irreducible in $\BQ[X,z]$. Note that $(0,\frac{p-1}{2})$ and $(\frac{m-1}{2},\frac{p-m}{2})$ are vertices of the Newton polygon of the polynomial $\Gamma_{(p,m)}(X,z) \in \BZ[X,z]$ by Corollary \ref{Gamma}, and $\gcd \left( (0,\frac{p-1}{2}), (\frac{m-1}{2},\frac{p-m}{2}) \right)=1$ since $\gcd(\frac{p-1}{2},\frac{m-1}{2})=1$. Hence \cite[Proposition 3]{BCG} implies that $\Gamma_{(p,m)}(X,z)$ is irreducible in $\BC[X,z]$.

Assume that $\Phi_{(p,m)}(x,z)$ is reducible in $\BC[x,z]$ and $f(x,z)$ is an irreducible factor of $\Phi_{(p,m)}(x,z)$. Write $f(x,z)=g(X,z)+xh(X,z)$, where $g,h \in \BC[X,z]$. If $h \equiv 0$ then $g(X,z)=f(x,z)$ is an irreducible factor of $\Gamma_{(p,m)}(X,z)$ in $\BC[X,z]$ and the total degree of $g(X,z)$ is less than that of $\Gamma_{(p,m)}(X,z)$, a contradiction since $\Gamma_{(p,m)}(X,z)$ is irreducible in $\BC[X,z]$. Hence $h \not\equiv 0$. Note that $f(-x,y)=g(X,z)-xh(X,z)$ is also an irreducible factor of  $\Phi_{(p,m)}(x,z)$ and $f(-x,z) \not= f(x,z)$. It follows that $f(x,y)f(-x,y) \in \BC[X,z]$ is a factor of $\Gamma_{(p,m)}(X,z)$. Since $\Gamma_{(p,m)}(X,z)$ is irreducible in $\BC[X,z]$, we must have $f(x,z)f(-x,z)=\Gamma_{(p,m)}(X,z)$. In particular $f^2(0,z)=\Gamma_{(p,m)}(0,z)=\Phi_{(p,m)}(0,z)$. This is impossible since $\Phi_{(p,m)}(0,z)$ does not have repeated factors, according to the proof of Proposition \ref{phi_n}. Hence $\Phi_{(p,m)}(x,z)$ is irreducible in $\BC[x,z]$.

(ii) Since $p$ is prime, by Propositions \ref{phi_n} and \ref{x=0}, $\Phi_{(p,m)}(x,z)$ is irreducible in $\BQ[x,z]$. The conclusion follows from Part (i).
\end{proof}

\end{document}